\definecolor{mor}{RGB}{155,5,255}
\colorlet{purpleB70}{blue!70!red}
\colorlet{orangeR65}{red!65!yellow}
\definecolor{red2}{HTML}{d41173}
\definecolor{neongreen}{HTML}{1bf702}
\definecolor{radicalred}{HTML}{FF355E}
\definecolor{denim}{HTML}{1560BD}
\definecolor{darkcyan}{rgb}{0.0, 0.55, 0.55}
\definecolor{cilek}{HTML}{FF43A4}
\definecolor{lavender}{HTML}{9F00C5}
\definecolor{phlox}{rgb}{0.87, 0.0, 1.0}
\definecolor{fluorescentpink}{HTML}{FF1493}
\definecolor{napiergreen}{rgb}{0.16, 0.5, 0.0}
\definecolor{kellygreen}{rgb}{0.3, 0.73, 0.09}
\definecolor{parisgreen}{HTML}{ 50C878 }
\definecolor{palatinateblue}{rgb}{0.15, 0.23, 0.89}
\definecolor{ceruleanblue}{rgb}{0.16, 0.32, 0.75}
\definecolor{brandeisblue}{rgb}{0.0, 0.44, 1.0}
\definecolor{KLMblue}{HTML}{0FC0FC}
\definecolor{cinnamon}{rgb}{0.82, 0.41, 0.12}
\definecolor{darkorange}{rgb}{1.0, 0.55, 0.0}
\definecolor{darktangerine}{rgb}{1.0, 0.66, 0.07}
\definecolor{deepcarrotorange}{rgb}{0.91, 0.41, 0.17}
\definecolor{internationalorange}{HTML}{FF4F00}
\definecolor{persimmon}{HTML}{EC5800}
\definecolor{pumpkin}{HTML}{FF7518}
\definecolor{darkred}{rgb}{1,0,0} %can change the intensity in [0,1]
\definecolor{darkgreen}{rgb}{0,0.7,0}
\definecolor{darkblue}{rgb}{0,0,1}
\def\reflb#1#2{\begingroup
    #2%
    \def\@currentlabel{#2}%
    \phantomsection\label{#1}\endgroup
}
\numberwithin{equation}{section}
\newtheorem{Theorem}{Theorem}
\numberwithin{Theorem}{section}
\newtheorem{TheoremX}{Theorem}
\newtheorem{CorollaryX}{Corollary}
\newtheorem   {Lemma}[Theorem]{Lemma}
\newtheorem   {Proposition}[Theorem]{Proposition}
\newtheorem   {Corollary}[Theorem]{Corollary}
\theoremstyle {definition}
\newtheorem   {Definition}[Theorem]{Definition}
\theoremstyle {remark}
\newtheorem   {Remark}[Theorem]{Remark}
\newtheorem   {Example}[Theorem]{Example}
\def    \eps    {\epsilon}
\newcommand{\CA}{{\mathcal A}}
\newcommand{\CC}{{\mathcal C}}
\newcommand{\CalD}{{\mathcal D}}
\newcommand{\CS}{{\mathcal S}}
\newcommand{\supp}{\operatorname{supp}}
\newcommand{\id}{{\mathit id}}
\newcommand{\const}{{\mathit const}}
\newcommand{\fc}{{\mathfrak c}}
\newcommand{\fx}{{\mathfrak x}}
\newcommand{\ty}{\tilde{y}}
\newcommand{\tL}{\tilde{L}}
\newcommand{\tK}{\tilde{K}}
\newcommand{\tU}{\tilde{U}}
\newcommand{\tu}{\tilde{u}}
\newcommand{\tx}{\tilde{x}}
\newcommand{\tpi}{\tilde{\pi}}
\newcommand{\CB}{{\mathcal B}}
\newcommand{\PP}{{\mathcal P}}
\def    \nat    {{\natural}}
\def    \F      {{\mathbb F}}
\def    \AA    {{\mathbb A}}
\def    \R      {{\mathbb R}}
\def    \Z      {{\mathbb Z}}
\def    \N      {{\mathbb N}}
\def    \T      {{\mathbb T}}
\def    \CP     {{\mathbb C}{\mathbb P}}
\def    \12     {{\frac{1}{2}}}
\def    \p      {\partial}
\def    \codim  {\operatorname{codim}}
\def    \im     {\operatorname{im}}
\def    \HF     {\operatorname{HF}}
\def    \H      {\operatorname{H}}
\def    \CF      {\operatorname{CF}}
\def    \vol     {\operatorname{vol}}
\def    \by      {\bar{y}}
\def    \hn    {\scriptscriptstyle{H}}
\def    \Fl    {\scriptscriptstyle{Fl}}
\newcommand    \htop  {\operatorname{h_{\scriptscriptstyle{top}}}}
\newcommand    \hvol  {\operatorname{h_{\scriptscriptstyle{vol}}}}
\newcommand   \hbr {\operatorname{\hbar}}
\newcommand \Span   {\operatorname{span}}
\begin{document}

%%%%%%%%%%%%%%%%%%%%%%%%%%%%%%
%   TEXT FORMATTING

\setlength{\smallskipamount}{6pt}
\setlength{\medskipamount}{10pt}
\setlength{\bigskipamount}{16pt}

%%%%%%%%%%%%%%%%%%%%%%%%%%

%%%%%%%%%%%%%%%%%%%%%%%%%%

%%%%%%%%%%%           BEGINNING OF  TEXT

%%%%%%%%%%%%%%%%%%%%%%%%%%

\title [Topological Entropy and Floer Theory]{Topological Entropy of
  Hamiltonian Diffeomorphisms: a Persistence Homology and Floer Theory
  Perspective}

\author[Erman \c C\. inel\. i]{Erman \c C\. inel\. i}
\author[Viktor Ginzburg]{Viktor L. Ginzburg}
\author[Ba\c sak G\"urel]{Ba\c sak Z. G\"urel}

\address{E\c C: ETH Z\"urich, R\"amistrasse 101, 8092 Z\"urich,
  Switzerland} \email{erman.cineli@math.eth.ch}

\address{VG: Department of Mathematics, UC Santa Cruz, Santa Cruz, CA
  95064, USA} \email{ginzburg@ucsc.edu}

\address{BG: Department of Mathematics,
  % University of Central Florida,
  UCF Orlando, Orlando, FL 32816, USA} \email{basak.gurel@ucf.edu}

\subjclass[2020]{53D40, 37J11, 37J46} 

\keywords{Topological entropy, Periodic orbits, Hamiltonian
  diffeomorphisms, Floer homology, Persistent homology and barcodes}

\date{\today} 

\thanks{The work is partially supported by NSF CAREER award
  DMS-1454342 (BG), Simons Foundation Collaboration Grants 581382 (VG)
  and 855299 (BG) and ERC Starting Grant 851701 via a postdoctoral
  fellowship (E\c{C})}

%\bigskip

\begin{abstract}
  We study topological entropy of compactly supported Hamiltonian
  diffeomorphisms from a perspective of persistent homology and
  Floer theory. We introduce barcode entropy, a Floer-theoretic
  invariant of a Hamiltonian diffeomorphism, measuring exponential
  growth under iterations of the number of not-too-short bars in the
  barcode of the Floer complex. We prove that the barcode entropy is
  bounded from above by the topological entropy and, conversely, that
  the barcode entropy is bounded from below by the topological entropy
  of any hyperbolic invariant set, e.g., a hyperbolic horseshoe. As a
  consequence, we conclude that for Hamiltonian diffeomorphisms of
  surfaces the barcode entropy is equal to the topological entropy.
\end{abstract}

\maketitle

\vspace{-0.2in}

\tableofcontents

\section{Introduction}
\label{sec:intro}
In this work we study topological entropy of compactly supported
Hamiltonian diffeomorphisms from a perspective of Floer theory, using
the machinery of persistent homology.  We introduce a Floer-theoretic
invariant of a Hamiltonian diffeomorphism, which we call barcode
entropy, measuring roughly speaking the rate of exponential growth
under iterations of the number of bars (of length greater than
$\eps>0$) in the barcode of the Floer complex. This invariant comes in
two forms: the absolute barcode entropy associated with the
Hamiltonian Floer complexes of the iterates of the diffeomorphism and
the relative barcode entropy arising from the Lagrangian Floer complex
of a fixed Lagrangian submanifold and its iterated images.

Barcode entropy can be thought of as a Floer theory counterpart of
topological entropy and the two invariants are closely related.  We
show that the barcode entropy (absolute and relative) is bounded from
above by the topological entropy (Theorem \ref{thm:A} and Corollary
\ref{cor:A}) and, conversely, that the absolute barcode entropy is
bounded from below by the topological entropy of any (uniformly)
hyperbolic invariant set, e.g., a horseshoe; see Theorem
\ref{thm:B}. In particular, as a consequence of these two bounds and a
work of Katok, \cite{Ka80}, we conclude in Theorem~\ref{thm:C} that
the absolute barcode entropy is equal to the topological entropy for
Hamiltonian diffeomorphisms of closed surfaces.

The crux of the paper lies in the definition of barcode entropy and
its connection to topological entropy.  The very existence of an
invariant relating features of the Hamiltonian or Lagrangian Floer
complex to topological entropy is not obvious. The only prior
indication known to us that this is indeed possible in dimension two
for relative barcode entropy comes from the main theorem in \cite{Kh}
and also \cite[Prop.\ 3.1.10]{Hu} discussed in more detail in Remark
\ref{rmk:thm-A-2D}.

To detect topological entropy, one has to extract from the Floer
complexes of the iterates an amount of information, up to an
$\eps$-error, growing exponentially with the order of iteration.
\emph{A priori} it is unclear if the complexes carry this much
information and, if so, how to extract it. (The $\eps$-error clause is
essential.) For instance, it is not obvious how and if a high-entropy
horseshoe localized to a small ball would register on the level of
Floer complexes. One apparent difficulty is that in most cases the
effective diameter of the action spectrum grows sub-exponentially with
the order of iteration; see Remark \ref{rmk:spec}. One can think of
Floer theory as a filter or an intermediate device between a dynamical
system and the observer, and it is not clear if it lets through enough
information to detect topological entropy in the Hamiltonian
setting. (See also Remark \ref{rmk:po-entropy} for a different
perspective.)

To the best of our knowledge, there are only two other settings where
connections between topological entropy--type invariants and
symplectic topology have been studied. The first setting concerns
topological entropy of Reeb flows and the growth of various flavors of
contact homology and, in a similar vein, the second one deals with
topological entropy of symplectomorphisms (or contactomorphisms) and
again the growth of Floer homology. (We do not touch upon slow
entropy, for this is ultimately an invariant of a very different
nature; see, however, Remark \ref{rmk:slow}.) This is an extensively
studied subject and we will elaborate on the results in Remark
\ref{rmk:Reeb}. Here we only mention that the underlying theme is that
positivity of topological entropy is obtained as a consequence of
exponential growth of some variant of Floer or contact homology. By
contrast, for Hamiltonian diffeomorphisms, the Floer homology is
independent of the order of iteration and there is no homology growth.
In fact, in the relative case, the Floer homology can even be zero. In
our setting, topological entropy is related to a non-robust (i.e.,
depending on the map) invariant of the Floer complex.

Barcode entropy, the key notion introduced in the paper, relies in a
crucial way on the language of persistent homology. Following
\cite{PS}, this machinery has become one of the standard tools in
studying the dynamics of Hamiltonian diffeomorphisms by symplectic
topological methods and, more generally, in symplectic dynamics,
although the class of problems it has been applied to is quite
different from the exponential growth questions we focus on here. The
barcode of a Floer complex encompasses completely robust invariants of
the system, such as spectral invariants and Floer homology, and also
more fragile features via finite bars. We refer the reader to
\cite{PRSZ, UZ} for a general introduction and to \cite{ADMM, BHS,
  CGG, GU, KS, LRSV, StZh, Sh:HZ, Su} for an admittedly incomplete
collection of sample results. One key point in some of these works and
also in, e.g., \cite{GG:gaps, GG:nc, Gu:nc, Or, Us0, Us}, not using
barcodes directly, is that finite bars, i.e., relatively fragile
features of a barcode, carry information related to interesting
dynamical properties of the system. The present paper builds on this
point.

The paper is organized as follows. In Section \ref{sec:intro2} we
state and extensively discuss the main definitions and results. In
Section \ref{sec:prelim} we set our conventions and notation and
briefly review relevant facts about filtered Lagrangian and
Hamiltonian Floer homology and barcodes following mainly
\cite{Us,UZ}. We return to the definition of relative barcode entropy
in Section \ref{sec:def+prop}, where we state its minor generalization
and also some of its properties. In Section \ref{sec:pf-A} we
generalize and prove Theorem \ref{thm:A}. Finally, Theorems
\ref{thm:B} and \ref{thm:C} are proved in Section \ref{sec:pf-BC},
where we also touch upon an \emph{a priori} lower bound on the
$\gamma$-norm of the iterates in the presence of a hyperbolic set; see
Proposition \ref{prop:gamma}.

\medskip\noindent{\bf Acknowledgements.} We are grateful to Anton
Gorodetski, Boris Hasselblatt and Yakov Pesin for illuminating
discussions and explanations pertaining to the proof of Theorem
\ref{thm:C} and to Sylvain Crovisier for pointing out that the local
maximality condition was not necessary in the first version of Theorem
\ref{thm:B}. Our special thanks are due to Otto van Koert for
stimulating discussions and a computer calculation in 2017 of the
indices and actions of periodic orbits in Smale's horseshoe. We would
also like to thank Marcelo Alves, Leonid Polterovich, Felix Schlenk
and Michael Usher for useful remarks and suggestions.

\section{Key definitions and results}
\label{sec:intro2} 
\subsection{Definitions}
\label{sec:def}
The key new notion introduced in the paper -- barcode entropy -- comes
in two versions: absolute and relative.

Let us start by briefly describing the setting in which these variants
of barcode entropy are defined.  Consider a symplectic manifold $M$
and a closed monotone Lagrangian submanifold $L\subset M$ and a second
Lagrangian submanifold $L'$ Hamiltonian isotopic to $L$. Denote by
$\Lambda$ the universal Novikov field over the ground field
$\F=\F_2$. Informally, $\Lambda$ can be thought of as the field of
Laurent series with coefficients in $\F$ with real (rather than
integer) exponents. The ambient manifold $M$ is not required to be
closed, but it has to have a sufficiently nice structure at infinity,
e.g., to be ``tame'' or convex. In addition, the minimal Maslov number
$N_L$ of $L$ needs to be at least 2. We refer the reader to Section
\ref{sec:prelim} for our conventions and notation, precise definitions
and further details.

Assuming first that $L$ and $L'$ are transverse, we have the filtered
Floer complex $\CF(L,L')$ which is a finite-dimensional vector space
over $\Lambda$ generated by $L\cap L'$. (The grading of the Floer
complex and homology is immaterial for our purposes.)  Denote by
$\CB(L,L')$ the barcode of $\CF(L,L')$ over $\Lambda$; see Section
\ref{sec:barcodes}. Note that in this case (i.e., whenever $L$, $L'$
are transverse) the barcode $\CB(L,L')$ is a finite set.  For
$\eps>0$, let $b_\eps(L,L')$ be the number of bars of length greater
than $\eps$ in this barcode:
\begin{equation}
  \label{eq:b-eps}
  b_\eps(L,L'):=\big|\{\textrm{bars of length greater than
    $\eps$ in $\CB(L,L')$}\}\big|.
\end{equation}

This definition extends in a straightforward way, essentially by
continuity, to the case when the manifolds are not necessarily
transverse. For instance, when $\eps$ is outside the closure
$\bar{\CalD}(L,L')$ of the action difference set (see Section
\ref{sec:spectrum}) we can set
$$
b_\eps(L,L')=b_\eps(L,\tL'),
$$
where $\tL'\pitchfork L$ and $\tL'$ is $C^\infty$-close and
Hamiltonian isotopic to $L'$; see \eqref{eq:b-eps2}. We refer the
reader to Section \ref{sec:def2} and, in particular, \eqref{eq:b-eps3}
for the definition in the general case, and to \cite[Chap.\ 6]{PRSZ}
for other appearances of $b_\eps$. Here we note that in the
non-transverse case the barcode $\CB(L,L')$ may contain infinitely
many bars. However, $b_\eps(L,L') < \infty$ for all $\eps>0$.

Let $\varphi=\varphi_H\colon M\to M$ be a compactly supported
Hamiltonian diffeomorphism. Set $L^k=\varphi^k(L)$.

\begin{Definition}[Relative Barcode Entropy, I]
  \label{def:hbr-rel1}
  The \emph{$\eps$-barcode entropy of $\varphi$ relative to $L$} is
  $$
  \hbr_\eps(\varphi;L):=\limsup_{k\to \infty}\frac{\log^+
    b_\eps\big(L,L^k\big)}{k}
  $$
  and the \emph{barcode entropy of $\varphi$ relative to $L$} is
  $$
  \hbr(\varphi;L):=\lim_{\eps\searrow 0} \hbr_\eps(\varphi, L) \in
  [0,\,\infty].
  $$
\end{Definition}
Here and throughout the paper the logarithm is taken base 2 and
$\log^+:=\max\{\log,0\}$.  Observe that $\hbr_\eps(\varphi, L)$ is
increasing as $\eps\searrow 0$, and hence the limit in the definition
of $\hbr(\varphi,L)$ exists although \emph{a priori} it can be
infinite.  We also emphasize again that the Floer homology $\HF(L)$ is
immaterial for this construction beyond the fact that it is
defined. For instance, $L$ can be a small circle in a surface with
$\HF(L)=0$.  We will extend this definition to pairs of Lagrangian
submanifolds in Section \ref{sec:def2}.

\begin{Remark}
  \label{rmk:intersections}
  Definition \ref{def:hbr-rel1} might feel somewhat
  counterintuitive. The underlying idea is that the barcode counting
  function $b_\eps(L, L')$ gives a lower bound on the number of
  transverse intersections, which is in some sense stable under small
  perturbations with respect to the Lagrangian Hofer distanse
  $d_{\hn}$. For instance, assume that Lagrangian submanifolds $L$,
  $L'$ and $L''$ are Hamiltonian isotopic, $L''\pitchfork L$ and
  $d_{\hn}(L',L'')<\delta/2$. Then, regardless of whether $L$ and $L'$
  are transverse or not, we have
  $$
  |L\cap L''|\geq b_{\eps+\delta}(L,L')
  $$
  for any $\eps\geq 0$; see Sections \ref{sec:barcodes} and
  \ref{sec:def2}.  This would not be true if we replaced
  $b_{\eps+\delta}(L,L')$ by $|L\cap L'|$: nearby intersections can be
  eliminated by a $C^\infty$-small perturbation.
\end{Remark}
 
Let now $M$ be a closed monotone symplectic manifold and again let
$\varphi=\varphi_H\colon M\to M$ be a Hamiltonian diffeomorphism. Then
we can apply the above constructions to $L=\Delta$, the diagonal in
the symplectic square $\big(M\times M, -\omega \oplus \omega\big)$,
with $\varphi$ replaced by $\id\times \varphi$, or directly to the
Floer complex $\CF(\varphi)$ of $\varphi$ \emph{for all free homotopy
  classes of loops in $M$}. For instance, denoting by $\CB(\varphi)$
the barcode of $\CF(\varphi)$ over $\Lambda$, we have
\begin{align*}
  b_\eps\big(\varphi^k\big)
  &=\big|\{\textrm{bars of length greater than
    $\eps$ in the barcode $\CB\big(\varphi^k\big)$}\}\big|\\
  &=b_\eps\big(L,L^k\big),
\end{align*}
where $L=\Delta$ and $L^k$ is the graph of $\varphi^k$.

\begin{Definition}[Absolute Barcode Entropy]
  \label{def:hbr-bar}
  The \emph{$\eps$-barcode entropy} of $\varphi$ is
  $$
  \hbr_\eps(\varphi):=\limsup_{k\to \infty}\frac{\log^+
    b_\eps\big(\varphi^k\big)}{k}
  $$
  and the \emph{(absolute) barcode entropy} of $\varphi$ is
  $$
  \hbr(\varphi):=\lim_{\eps\searrow 0} \hbr_\eps(\varphi) \in
  [0,\,\infty]
  $$
  or, in other words,
  $$
  \hbr(\varphi):=\hbr(\id\times\varphi;\Delta).
  $$
 \end{Definition}
 Here again $\hbr_\eps(\varphi)$ is increasing as $\eps\searrow 0$,
 and hence the limit in the definition of $\hbr(\varphi)$ exists. Note
 that in this definition, in contrast with the relative barcode
 entropy, we can work with any ground field $\F$ as long as $M$ is
 monotone.

 In this paper we are primarily interested in absolute barcode entropy
 while relative entropy plays a purely technical role, arising
 naturally in our approach to the proof of Corollary \ref{cor:A} via
 Theorem \ref{thm:A}. We will revisit the definitions and briefly
 touch upon general properties of barcode entropy in Section
 \ref{sec:def+prop}.

\subsection{Main results}
\label{sec:results}
With the definition of barcode entropy in place, we are ready to state
the main results of the paper, which ultimately justify the
definition.
\begin{TheoremX}
  \label{thm:A}
  Let $L$ be a closed monotone Lagrangian submanifold with minimal
  Chern number $N_L\geq 2$ in a symplectic manifold $M$ and let
  $\varphi\colon M\to M$ be a compactly supported Hamiltonian
  $C^\infty$-diffeomorphism. Then
   $$
   \hbr(\varphi; L)\leq \htop(\varphi).
   $$
 \end{TheoremX}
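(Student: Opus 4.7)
The plan is to bound $b_\eps(L, L^k)$ from above by the number of transverse intersections of $L$ with a suitable small Hamiltonian perturbation of $L^k$, then to bound this intersection count by the $n$-dimensional volume of $\varphi^k(L)$, and finally to invoke Yomdin's theorem on volume growth to relate this to $\htop(\varphi)$.

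For the first step, fix $\eps>0$ and $\delta>0$. For each $k$ we choose a compactly supported $\psi_k\in\Ham(M)$ such that $\psi_k(L^k)\pitchfork L$, the Lagrangian Hofer distance satisfies $d_\hn(L^k,\psi_k(L^k))<\delta/2$, and $\|\psi_k-\id\|_{C^1}<\eta$ for a prescribed small $\eta>0$. Such $\psi_k$ exists: any time-1 map of a $C^1$-small, compactly supported Hamiltonian is both $C^0$-small (hence Hofer-small, with Hofer norm bounded by the $C^0$-norm of the generating Hamiltonian) and a small $C^1$-perturbation of $\id$, while transversality with $L$ is a residual condition inside any such $C^1$-neighborhood of $\id$. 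By the Hofer-stability of $b_\eps$ recalled in the introduction, combined with the elementary bound $b_\eps(L,L'')\leq|L\cap L''|$ for transverse $L''$ (since every bar of length $>\eps$ requires at least one generator of $\CF(L,L'')$ at one of its endpoints), we obtain
$$b_{\eps+\delta}(L,L^k)\leq|L\cap\psi_k(L^k)|.$$

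Next I would establish a Crofton-type upper bound. Fix a Riemannian metric $g$ on $M$ and a closed $n$-form $\omega_L$ compactly supported in a tubular neighborhood of $L$ and representing the Thom class of $L$. For any $n$-dimensional closed submanifold $L''$ transverse to $L$,
$$|L\cap L''|\leq\int_{L''}|\omega_L|\leq\|\omega_L\|_\infty\cdot\vol_g(L'')=:C\cdot\vol_g(L''),$$
where $C$ depends only on $L$ and $g$. The $C^1$-smallness of $\psi_k$ yields $\vol_g(\psi_k(L^k))\leq 2\vol_g(\varphi^k(L))$ once $\eta$ is small enough. Applying Yomdin's theorem to the $C^\infty$-diffeomorphism $\varphi$ and the smooth compact submanifold $L$ then gives
$$\limsup_{k\to\infty}\frac{\log\vol_g(\varphi^k(L))}{k}\leq\htop(\varphi).$$
Chaining these three inequalities yields $\hbr_{\eps+\delta}(\varphi;L)\leq\htop(\varphi)$. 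Since $\hbr_{\eps'}(\varphi;L)$ is monotone in $\eps'$, letting $\delta\searrow 0$ for fixed $\eps$ and then $\eps\searrow 0$ concludes the proof.

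The principal difficulty lies in Step 1: the simultaneous construction of a perturbation $\psi_k$ that is Hofer-small (so barcode stability applies), $C^1$-small (so $\vol_g(\psi_k(L^k))$ stays comparable to $\vol_g(\varphi^k(L))$) and produces transverse intersections with $L$. It is important that the constants $C$ and the volume-distortion factor are independent of $k$, which is why we take a single uniform $\eta$. The $C^\infty$-hypothesis on $\varphi$ in the theorem statement is used precisely in Yomdin's theorem, whose conclusion on intrinsic volume growth is otherwise false at finite regularity; the hypotheses that $L$ is monotone with $N_L\geq 2$ enter only to guarantee that the Lagrangian Floer barcode is well-defined.
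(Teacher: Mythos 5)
Your general plan -- bound $b_\eps$ by a transverse intersection count, bound the intersection count by volume, invoke Yomdin -- is the right skeleton, and your first step (Hofer-small, $C^1$-small perturbation $\psi_k$, together with barcode stability and $b_\eps(L,L'')\leq|L\cap L''|$) is sound and parallels the paper. The problem is your ``Crofton-type upper bound'': the inequality
\[
|L\cap L''|\leq\int_{L''}|\omega_L|\leq\|\omega_L\|_\infty\cdot\vol_g(L'')
\]
is false, and the first inequality is the wrong one. The Thom form only controls the \emph{algebraic} intersection number $\int_{L''}\omega_L$; the geometric count $|L\cap L''|$ is not bounded by $\int_{L''}|\omega_L|$, and in particular not by $\vol_g(L'')$. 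A concrete counterexample: take $L$ to be the $x$-axis in $\R^2$ and $L''$ the graph of $g(x)=N^{-2}\sin(Nx)$ over $[0,2\pi]$. Then $|L\cap L''|\sim 2N$, while $\vol_g(L'')\to 2\pi$ and (with $\omega_L=f(y)\,dy$ supported in $|y|<\rho$) the integral $\int_{L''}|\omega_L|=\int_0^{2\pi}|f(g(x))g'(x)|\,dx\lesssim N^{-1}\rho^{-1}\to 0$. So neither $\int_{L''}|\omega_L|$ nor $C\cdot\vol_g(L'')$ bounds the intersection count, even up to a constant. This matters here precisely because $\varphi^k(L)$ can shadow $L$ tangentially and, after your $\psi_k$-perturbation, produce a huge number of shallow transverse crossings with small volume. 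The apparent paradox -- transverse intersections cannot in general be bounded by the volume of one of the submanifolds -- is the obstruction the paper is designed to get around.

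The paper's workaround is a \emph{Lagrangian tomograph}: instead of one perturbed copy of $L$, one constructs a whole family $\{L_s\}_{s\in B^d}$ of Hofer-small Hamiltonian perturbations of $L$, independent of $k$, with a submersion $\Psi\colon B^d\times L\to M$, $\Psi_s(L)=L_s$. The Crofton-type inequality that does hold is the \emph{averaged} one,
\[
\int_{B^d} N_k(s)\,ds\leq\const\cdot\vol\big(L^k\big),\qquad N_k(s)=|L_s\cap L^k|,
\]
proved by pulling $L^k$ back under $\Psi$ and using Fubini; no pointwise bound $N_k(s)\leq\const\cdot\vol(L^k)$ is claimed or true. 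The logical direction is then reversed relative to your argument: barcode stability (Remark~\ref{rmk:intersections} / inequality~\eqref{eq:intersections}) gives $N_k(s)\geq b_{2\eps}(L,L^k)$ for \emph{every} $s\in B^d$ with $L_s\pitchfork L^k$, so the averaged Crofton inequality turns an exponential lower bound on $b_{2\eps}(L,L^k)$ into an exponential lower bound on $\vol(L^k)$, and Yomdin finishes the proof. To repair your argument you would essentially have to replace the single-$L''$ comparison by this averaged one, which is a genuinely different (and necessary) idea rather than a technical fix.
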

 Note that since $\varphi$ is compactly supported, the lack of
 compactness of $M$, provided that it is ``tame'' at infinity, causes
 no additional problems. We can set
 $\htop(\varphi):=\htop(\varphi|_{\supp(\varphi)})$ or equivalently
 $\htop(\varphi):=\htop(\varphi|_{X})$ for any compact set
 $X\supset \supp(\varphi)$.  Variants of Theorem \ref{thm:A} also hold
 in some other cases; see, e.g., Remark \ref{rmk:filtr-other3}.

 Since $\htop(\id\times\varphi)=\htop(\varphi)$, as an immediate
 consequence of Theorem \ref{thm:A}, we have the following.
 
 \begin{CorollaryX}
   \label{cor:A}
   Let $\varphi\colon M\to M$ be a Hamiltonian
   $C^\infty$-diffeomorphism of a closed monotone symplectic manifold
   $M$. Then
   $$
   \hbr(\varphi)\leq \htop(\varphi).
   $$
\end{CorollaryX}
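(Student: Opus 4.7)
The plan is to deduce Corollary \ref{cor:A} from Theorem \ref{thm:A} by applying the latter to the diagonal Lagrangian in the symplectic square. By definition,
$$
\hbr(\varphi) = \hbr(\id \times \varphi;\,\Delta),
$$
where $\Delta \subset (M\times M,\,-\omega\oplus\omega)$ is the diagonal and $\id\times\varphi$ is the Hamiltonian $C^\infty$-diffeomorphism generated, up to a time-reparametrization, by the pull-back of $H$ via the second projection. Since $\id\times\varphi$ is compactly supported (in fact, $M\times M$ is closed since $M$ is), the hypotheses of Theorem \ref{thm:A} require only that $\Delta$ be a closed monotone Lagrangian with minimal Chern number $N_\Delta \geq 2$.

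The first step, then, is to verify these structural conditions. The manifold $\Delta$ is diffeomorphic to $M$ and hence closed. Under the natural identification $\pi_2(M\times M,\Delta) \cong \pi_2(M)$ induced by either projection, the symplectic area of a relative class in $M\times M$ equals the $\omega$-area of the corresponding sphere class in $M$ (the contribution $-\omega$ from the first factor cancels), while the Maslov class of $\Delta$ pulls back to $2c_1(TM)$. Consequently, $\Delta$ inherits monotonicity from $M$, and its minimal Chern number satisfies $N_\Delta = 2N_M \geq 2$, so the hypotheses of Theorem \ref{thm:A} hold automatically regardless of whether $N_M$ equals $1$ or is larger.

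The second step is to apply Theorem \ref{thm:A} to the Hamiltonian diffeomorphism $\id\times\varphi$ acting on $M\times M$ with Lagrangian $\Delta$, yielding
$$
\hbr(\id\times\varphi;\,\Delta) \leq \htop(\id\times\varphi).
$$
The final step is to combine this with the elementary product formula $\htop(\id\times\varphi) = \htop(\id) + \htop(\varphi) = \htop(\varphi)$, which is already noted just before the statement of the corollary. Substituting back the definition of absolute barcode entropy yields $\hbr(\varphi)\leq \htop(\varphi)$.

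The only genuinely non-trivial ingredient here is the identity $\hbr(\varphi)=\hbr(\id\times\varphi;\Delta)$, together with the monotonicity/minimal-Chern-number check for $\Delta$; both are set up directly by Definition \ref{def:hbr-bar} and by the standard description of relative homotopy and Maslov data for the diagonal. Since Theorem \ref{thm:A} is assumed and $\htop$ is a product-invariant under trivial factors, no further work is required, and the corollary follows.
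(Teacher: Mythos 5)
Your proof is correct and follows exactly the route the paper takes: it applies Theorem~\ref{thm:A} to the diagonal $\Delta\subset(M\times M,(-\omega)\oplus\omega)$ with the Hamiltonian diffeomorphism $\id\times\varphi$, and then invokes $\htop(\id\times\varphi)=\htop(\varphi)$. The only addition is your explicit check that $\Delta$ is monotone with $N_\Delta\geq 2$, which the paper leaves implicit; this is a harmless and correct bit of due diligence.
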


Another interesting consequence of Theorem \ref{thm:A}, not obvious
from the definitions, is that $\hbr(\varphi; L)<\infty$ and, in
particular, $\hbr(\varphi)<\infty$. In contrast with Theorem
  \ref{thm:A}, here and in Theorems \ref{thm:B} and \ref{thm:C} the
ground field $\F$ can have any characteristic.

\begin{Remark}[Growth of periodic points]
  \label{rmk:PO-Growth}
  One cannot replace the number of bars $b_\eps(\varphi^k)$ or
  $b_\eps(L,L^k)$ in the definition of barcode entropy by the total
  number of $k$-periodic points or Lagrangian intersections, while
  keeping Theorem \ref{thm:A} and Corollary \ref{cor:A}. Indeed, in
  dimension two, the number of periodic points can grow arbitrarily
  fast, and moreover super-exponential growth is in some sense
  typical; see \cite{As}. In higher dimensions, a smooth zero-entropy
  map may have super-exponential orbit growth; \cite{Kal}. Hence, in
  both cases the exponential growth rate of the number of periodic
  points could in general be infinite.
\end{Remark}

\begin{Remark}[Idea of the proof of Theorem \ref{thm:A}]
  As in many results of this type (see, e.g., \cite{Me}), Theorem
  \ref{thm:A} is ultimately based on Yomdin's theorem relating
  topological entropy to the rate of exponential volume growth; see
  \cite{Yo} and also \cite{Gr}. (Hence, the requirement that $\varphi$
  is $C^\infty$-smooth is essential.) Let us briefly outline the idea
  of the proof assuming that all intersections are transverse. For a
  small $\eps>0$, the barcode entropy is roughly the rate of
  exponential growth of $b_\eps\big(L,L^k\big)$ as $k\to\infty$. By
  Remark \ref{rmk:intersections}, this rate bounds from below the rate
  of exponential growth of
  $N_k(\tilde{L}):=\big|\tilde{L}\cap L^k\big|$ for any Lagrangian
  submanifold $\tilde{L}$ Hamiltonian isotopic and $d_{\hn}$-close to
  $L$ with the upper bound on $d_{\hn}(L,\tilde{L})$ completely
  determined by $\eps$.  We construct a \emph{Lagrangian tomograph}: a
  family of such Lagrangian submanifolds $\tilde{L}=L_s$, independent
  of $k$ and parametrized by $s$ in some ball $B$, so that
  $$
  \int_B N_k(L_s)\, ds\leq \const \cdot\vol \big(L^k\big)
  $$
  by a variant of Crofton's inequality; cf.\ \cite{Ar1,Ar2} where a
  similar construction is used. Now Yomdin's theorem gives a lower
  bound on $\htop(\varphi)$. Note that in contrast with some other
  arguments of this type (see, e.g., \cite{Al2,FS:slow} and references
  therein), the ball $B$ can possibly have very large dimension and
  the map $\Psi\colon B\times L \to M$ sending $(s,x)$ to the image of
  $x$ on $L_s$ need not be a fibration but only a submersion onto its
  image.
\end{Remark}

\begin{Remark}
  Instead of working with the class of monotone Lagrangian
  submanifolds $L$ one can require $L$ to be oriented, relatively spin
  and weakly unobstructed after bulk deformation as in \cite{FOOO} and
  replaced the coefficient field $\F_2$ by a field of zero
  characteristic. We expect Theorem \ref{thm:A} to still hold in this
  setting and the proof to go through word-for-word.
\end{Remark}

We do not view Theorem \ref{thm:A} or Corollary \ref{cor:A} as an
effective method to calculate $\htop(\varphi)$, but rather as a result
connecting two notions of entropy lying in completely disparate
domains. Yet, this result would be meaningless and hold trivially if
$\hbr(\varphi,L)$ were always zero. (As an example,
$\hbar(\varphi_H)=0$ when $H$ is an autonomous Hamiltonian on a
surface. This fact is a consequence of Theorem \ref{thm:C} below and
is not directly obvious. An alternative approach would be to show that
in this case the volume of the graph of $\varphi^k$ grows
sub-exponentially and then invoke the proof of Theorem \ref{thm:A} or,
when $H$ is real analytic or Morse, one can verify directly that
$b_\eps\big(\varphi^k\big)$ grows at most polynomially.)

Here we focus on absolute barcode entropy, and the next two theorems
show that often $\hbar(\varphi)\neq 0$ and, in fact, the two notions
of entropy are perhaps (numerically) closer to each other than one
might expect.

In the next theorem we are concerned with (uniformly)
\emph{hyperbolic} invariant sets. Recall that an invariant set
$K\subset M$ is said to be hyperbolic if for some Riemannian metric on
$M$ there exist positive constants $\lambda_- < 1 < \lambda_+$ and a
splitting $T_xM=E^-_x\oplus E^+_x$ for every $x\in K$, invariant under
$D\varphi$, such that
$$
\|D\varphi|_{E^-_x}\|\leq \lambda_-\textrm{ and }
\|D\varphi|_{E^+_x}\|\geq \lambda_+
$$
for all $x\in K$. We refer the reader to, e.g., \cite[Sect.\ 6]{KH}
for a detailed discussion of hyperbolicity. Here such sets are
required to be compact by definition.

\begin{TheoremX}
   \label{thm:B}
   Let $\varphi\colon M\to M$ be a Hamiltonian diffeomorphism of a
   closed monotone symplectic manifold $M$ and let $K\subset M$ be a
   hyperbolic invariant subset. Then

   $$
   \hbr(\varphi)\geq \htop(\varphi|_K).
   $$
\end{TheoremX}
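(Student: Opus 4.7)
The plan is to estimate $b_\eps(\varphi^k)$ from below by exploiting the abundance of hyperbolic periodic orbits in $K$ and then converting these periodic orbits into long bars of the barcode of $\CF(\varphi^k)$ via a crossing-energy estimate.

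First, I would invoke the standard Katok/Bowen-type lower bound for hyperbolic invariant sets to produce, for any $\eta>0$ and all sufficiently large $k$, at least $N_k \geq 2^{k(\htop(\varphi|_K)-\eta)}$ pairwise distinct periodic orbits of $\varphi$ of period dividing $k$ lying in a small neighborhood $V$ of $K$, each of which is hyperbolic as a fixed point of $\varphi^k$ and persists under $C^1$-small perturbations. After a $C^\infty$-small compactly supported Hamiltonian perturbation localized away from $V$, I may assume $\varphi$ is strongly non-degenerate while keeping this Katok count intact. To each such orbit $\gamma$ I attach a pairwise-disjoint small neighborhood $U_\gamma$ of the orbit, chosen so that $U_\gamma$ contains only the fixed point $x_\gamma\in \gamma$ among the fixed points of $\varphi^k$.

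The dynamical heart of the argument is a \emph{crossing-energy estimate}, standard in local Floer theory: by a compactness and monotonicity argument there is a uniform constant $E_0>0$, depending on the chosen neighborhoods and the hyperbolic data but not on $k$, such that every Floer cylinder for $\varphi^k$ that is asymptotic to a generator inside some $U_\gamma$ at one end and to a generator outside $U_\gamma$ at the other has Floer energy at least $E_0$. The central step is to convert this trajectory-level estimate into a length bound for bars. In a Barannikov-type normal form for $\CF(\varphi^k)$, each generator appears as the unique birth or death of exactly one bar, finite or infinite. The Barannikov partner $y$ of $x_\gamma$ cannot lie in $U_\gamma$, since $x_\gamma$ is the only fixed point of $\varphi^k$ there; hence $y$ sits outside $U_\gamma$ and, provided one can identify the length of the bar with the minimal energy of the Floer data realizing the cancellation, one obtains that the bar has length at least $E_0$. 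Since each bar has at most two endpoint generators, this yields at least $N_k/2$ bars of length greater than $E_0$, so $b_\eps(\varphi^k) \geq N_k/2$ whenever $\eps<E_0$. Letting first $\eps\searrow 0$ and then $\eta\searrow 0$ gives $\hbr(\varphi)\geq \htop(\varphi|_K)$.

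The main obstacle is precisely the conversion of the crossing-energy estimate into a bar-length bound. The Barannikov partner of $x_\gamma$ need not be joined to it by a single Floer cylinder: cancellations are in principle effected by cascades of trajectories, and, in the monotone closed setting, by Novikov-shifted contributions from sphere bubbles as well. I would address this by an induction on action levels, arguing that any algebraic cascade which transports homological information from a generator inside $U_\gamma$ to one outside must include at least one Floer cylinder crossing $\partial U_\gamma$ and therefore carries energy at least $E_0$, so that the total action jump across the relevant bar is bounded below by $E_0$. A secondary concern is to verify that the Katok count survives the non-degeneracy perturbation, which should follow from the openness and structural stability of hyperbolic sets under $C^1$-small perturbations.
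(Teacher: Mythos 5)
Your high-level strategy---exponentially many hyperbolic $k$-periodic orbits in $K$, a crossing-energy estimate, and a conversion of energy gaps into bar counts---is indeed the paper's. However, there are two genuine gaps where your proposed resolutions would not go through.

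The first concerns the per-orbit neighborhoods $U_\gamma$. Since the number of $k$-periodic points of $\varphi|_K$ grows exponentially with $k$ and $K$ is compact, pairwise disjoint neighborhoods of these orbits must shrink as $k$ grows, so a crossing-energy constant tied to $U_\gamma$ degenerates with $k$ and gives nothing uniform. The paper instead first replaces $K$ by a locally maximal hyperbolic set of nearly the same entropy (via Avila--Crovisier--Wilkinson and the Variational Principle), fixes a single isolating neighborhood $U$ of that set, and applies the Crossing Energy Theorem to get a $k$-independent constant. This handles trajectories with one end escaping $K$; the case your formulation misses is a low-energy Floer trajectory connecting two nearby $k$-periodic orbits both inside $K$ and staying in $U$. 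There the paper invokes the Anosov Closing Lemma and expansivity: each $s$-slice of the trajectory is an $\eta$-pseudo-orbit with $\eta=O\big(E(u)^{1/4}\big)$, so shadowing yields a single periodic orbit independent of $s$, forcing $x=y$ and then $E(u)=0$.

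The second gap you flag yourself: converting the trajectory-level energy bound into a bar-length bound. The Barannikov-partner and cascade argument you sketch is not how the paper closes the proof, and your stated concern about cascades and Novikov sphere contributions is well-founded---there is no straightforward identification of a bar length with the energy of a single trajectory. The paper circumvents cascades entirely via the purely algebraic Proposition 3.2: if $p$ generators of the Floer graph are $\eps$-isolated, meaning every arrow in the differential incident to such a generator has energy $>\eps$, then $b_\eps(\CC)\geq p/2$. Its proof uses orthogonal singular decompositions and $\eps$-robust subspaces in the sense of Usher--Zhang, not Barannikov pairings or cascade inductions. Without an algebraic reduction of this kind, your argument does not close.
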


The key to the proof of this theorem is the fact that a Floer
trajectory $u$ asymptotic to a periodic orbit $x$ of any period with
$x(0)\in K$ must have energy bounded away from zero by a certain
constant $\eps_K>0$ independent of $u$; see Proposition
\ref{prop:energy}. The proof of the proposition is based on the
Crossing Energy Theorem, \cite[Thm.\ 6.1]{GG:PR}, and the Anosov
Closing Lemma, \cite[Thm.\ 6.4.15]{KH}. In contrast with other results
of this paper, it would be sufficient in Theorem \ref{thm:B} to assume
that $\varphi$ is only $C^2$-smooth.

Among the examples of sets $K$ meeting the requirements of Theorem
\ref{thm:B} are hyperbolic horseshoes and then $\htop(\varphi|_K)>0$;
see the discussion in Section \ref{sec:pf-C}. Hence, whenever
$\varphi$ has such a horseshoe, which is a common occurrence,
$\hbr(\varphi)>0$.

In dimension two, $\htop(\varphi)$ is the supremum of
$\htop(\varphi|_K)$ over all $K$ as in Theorem \ref{thm:B}; see
\cite{Ka80} and also \cite[Suppl.\ S by Katok and Mendoza]{KH}. Hence,
by Theorem \ref{thm:A} (or Corollary \ref{cor:A}) and Theorem
\ref{thm:B}, we have the following.

 \begin{TheoremX}
   \label{thm:C}
   Let $\varphi\colon M\to M$ be a Hamiltonian
   $C^\infty$-diffeomorphism of a closed surface $M$. Then
   \begin{equation}
     \label{eq:bar-top}
   \hbr(\varphi)= \htop(\varphi).
 \end{equation}
\end{TheoremX}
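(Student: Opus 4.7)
The plan is to sandwich $\hbr(\varphi)$ between two copies of $\htop(\varphi)$ using Corollary~\ref{cor:A} on the upper-bound side and Theorem~\ref{thm:B} on the lower-bound side, with Katok's approximation theorem for surface diffeomorphisms serving as the bridge. The result is essentially a formal consequence of the three inputs: the remark immediately preceding the theorem statement even records the required form of Katok's result.

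The upper bound $\hbr(\varphi)\le\htop(\varphi)$ is a direct consequence of Corollary~\ref{cor:A}; a closed surface with an area form is monotone in the relevant convention, so the hypothesis is met.

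For the reverse inequality, if $\htop(\varphi)=0$ then the upper bound together with $\hbr(\varphi)\ge 0$ forces $\hbr(\varphi)=0$, and \eqref{eq:bar-top} holds trivially. Assume therefore $\htop(\varphi)>0$. By a theorem of Katok \cite{Ka80} (see also \cite[Suppl.\ S by Katok and Mendoza]{KH}), for a $C^\infty$-diffeomorphism of a closed surface one has
\[
\htop(\varphi)\;=\;\sup_{K}\,\htop(\varphi|_K),
\]
where the supremum runs over compact hyperbolic invariant sets $K\subset M$; concretely, there exist hyperbolic horseshoes whose restricted entropy approximates $\htop(\varphi)$ from below. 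For every such $K$, Theorem~\ref{thm:B} yields $\hbr(\varphi)\ge \htop(\varphi|_K)$. Taking the supremum over $K$ gives $\hbr(\varphi)\ge\htop(\varphi)$, and combining with the upper bound establishes \eqref{eq:bar-top}.

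The only substantive ingredient is the invocation of Katok's approximation theorem. The main verification I would want to perform is that the horseshoes produced by Katok meet the uniform hyperbolicity convention used in Theorem~\ref{thm:B}, and---if those horseshoes are built as invariant sets for some iterate $\varphi^n$ rather than $\varphi$ itself---that one can pass to the $\varphi$-invariant hyperbolic set $K=\bigcup_{i=0}^{n-1}\varphi^i(K_0)$ without losing entropy density, since $\htop(\varphi|_K)=\htop(\varphi^n|_{K_0})/n$ and $\htop(\varphi^n)=n\,\htop(\varphi)$. Once this check is in place, the proof reduces to the two-line sandwich above.
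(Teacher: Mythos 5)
Your proof is correct and takes the same route as the paper: the upper bound comes from Theorem~\ref{thm:A} (equivalently Corollary~\ref{cor:A}), the lower bound from combining Katok's theorem in the form $\htop(\varphi)=\sup_K\htop(\varphi|_K)$ with Theorem~\ref{thm:B}. Your concluding caveat is already absorbed into the paper's setup, since Theorem~\ref{thm:B} is stated for arbitrary compact hyperbolic invariant sets (its proof reduces to the locally maximal case via \cite{ACW}), so no separate check on the horseshoes or passage from an iterate $\varphi^n$ back to $\varphi$ is needed.
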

We will prove and further discuss Theorem \ref{thm:C} in
Section~\ref{sec:pf-BC}.

A bonus consequence of the proof of Theorem \ref{thm:B} is that the
$\gamma$-norm, and hence the Hofer norm, of the iterates $\varphi^k$
is bounded away from zero when, for instance, $\varphi$ has a
horseshoe or sufficiently many hyperbolic fixed points and also in
some other cases; see Section \ref{sec:gamma} and
Proposition~\ref{prop:gamma}.

Another consequence of the proof of Theorem \ref{thm:A} and Theorem
\ref{thm:C} is a relation between the barcode and topological entropy
of $\varphi$ and the exponential growth rate of the volume of the
graph of $\varphi^k$; see Section \ref{sec:vol}.

\begin{Remark}    
  Since this work appeared as a preprint, a variant of Theorem
    \ref{thm:B} for Lagrangian intersections has been proved in
    \cite{Me:Entropy}.
\end{Remark}  

\subsection{Discussion and further remarks}
\label{sec:discussion}
Our definition of barcode entropy is quite general and can be applied
to any sequence of persistence modules. (See, e.g., \cite{Ca} and also
\cite{PRSZ} for an introduction to persistent homology theory and
further references.) Then barcode entropy can be thought of as the
rate of exponential growth of information carried by this sequence,
provided that we take $b_\eps$ as a measure of the amount of
information. With this in mind, we expect the notion to be useful in
some other settings. However, from a purely algebraic perspective the
definition is also quite primitive: in contrast with topological
entropy, one cannot infer any interesting properties of barcode
entropy directly from the definition; cf.\ Section \ref{sec:prop}.

Yet, Theorems \ref{thm:A}, \ref{thm:B} and \ref{thm:C} show that this
is the ``right'' definition in the context of Floer theory and that in
this setting barcode entropy has numerous properties not formally
following from the definition. The reason probably is that Floer
complexes have rich additional features, and it would be interesting
to understand what properties of Floer complexes ``make this
definition work'' -- the proofs do not answer this question.

In the rest of this section we will further comment on our main
results.

\begin{Remark}[Theorem \ref{thm:A} when $\dim M=2$]
  \label{rmk:thm-A-2D}
  The assertion of Theorem \ref{thm:A} is already nontrivial when $L$
  is an embedded loop (a Lagrangian submanifold) on a surface $M$,
  even when $L=S^1$ is the zero section in $M= T^*S^1=S^1\times \R$ or
  $L$ is a meridian or a parallel in $M=S^1\times S^1$. However, in
  this situation, one might expect to have a more general criterion.
  Namely, the results from \cite{Kh} building on \cite{BM} indicate
  that when $\dim M=2$, for a broad class of loops $L$, there could be
  a sufficient condition, aka a criterion, for $\htop(\varphi)>0$
  expressed solely in terms of $\CF\big(L,\varphi(L)\big)$ without
  using the iterates $\varphi^k$, similarly to the forced entropy
  results. (See, e.g., \cite{AP} and references therein.) However, no
  such a criterion is known; nor if this is a purely low-dimensional
  phenomenon. Furthermore, for a fixed $L$, such an entropy positivity
  condition, even if it exists, could not be necessary, i.e., it
  cannot possibly always detect positive entropy; see Example
  \ref{exm:moving-L} below. Also note that \cite[Prop.\ 3.1.10]{Hu},
  based on \cite{Kh:Th}, gives a lower bound for the topological
  entropy of a Hamiltonian diffeomorphism $\varphi\colon S^2\to S^2$
  in terms of the linear growth rate of the Hofer distance between $L$
  and $\varphi^k(L)$, where $L$ is the equator. (To the best of our
  knowledge, there are no examples where this distance is shown to
  grow linearly, although conjecturally such examples exist.)
\end{Remark}

\begin{Example}[Strict inequality in Theorem \ref{thm:A}]
  \label{exm:moving-L}
  In contrast with Theorem \ref{thm:C}, in the setting of Theorem
  \ref{thm:A} the inequality can be strict even when $\dim M=2$ (and
  hence $\dim L=1$). Indeed, assume that $L$ is contained in a region
  where $\varphi=\id$, but $\varphi$ has a hyperbolic horseshoe
  elsewhere. Then $\hbr(\varphi;L)=0$, but $\htop(\varphi)>0$. Also
  note that in dimension two $\hbr(\varphi;L)\leq \hbr(\varphi)$ as a
  consequence of Theorem \ref{thm:C}, and it would be very
  illuminating to understand if this is true in general.
\end{Example}

\begin{Remark}[Non-compact version of Corollary \ref{cor:A}]
  The corollary readily extends to compactly supported Hamiltonian
  diffeomorphisms $\varphi=\varphi_H$ of symplectic manifolds $M$
  sufficiently ``tame'' at infinity. For instance, $M$ can be convex
  at infinity or wide in the sense of \cite{Gu}. To define the barcode
  entropy of $\varphi$, we fix a proper autonomous Hamiltonian $Q$
  without 1-periodic orbits at infinity and vanishing on $\supp
  H$. (For instance, $Q$ can be a small positive definite quadratic
  form outside a large ball when $M=\R^{2n}$.) Next, let $\psi$ be a
  small non-degenerate perturbation of $\varphi_Q\varphi^k$ coinciding
  with $\varphi_Q$ at infinity. The filtered Floer complex of $\psi$
  is defined and can be used in place of $\CF\big(\varphi^k\big)$ to
  define the barcode entropy $\hbar(\varphi)$ of $\varphi$. (This
  complex depends on $Q$ and the perturbation, but $b_\eps(\psi)$ is
  independent of the perturbation, although it might still depend on
  $Q$.) Now the proof of Corollary \ref{cor:A} via Theorem
  \ref{thm:A2} goes through, establishing the inequality
  $\hbar(\varphi)\leq \htop(\varphi)$, when we let $L_0$ be the
  diagonal in $M\times M$ and replace $L^k$ by the graph of $\psi$;
  see Section \ref{sec:pf-A2}. In a similar vein, Theorems \ref{thm:B}
  and \ref{thm:C} also hold with the definition of barcode entropy
  suitably adjusted when $M$ is ``open'' but $\varphi$ is compactly
  supported.
\end{Remark}

\begin{Remark}[Free homotopy classes of loops, I]
  \label{rmk:hom-classes}
  The requirement that we work with the Floer complex for the entire
  set $\tpi_1(M)$ of free homotopy classes of loops in $M$ is
  absolutely essential and without it the theorems would not hold; see
  Example \ref{ex:hom-classes}. In the Lagrangian setting, its
  counterpart is commonly assumed: the Lagrangian Floer complex
  $\CF(L_0,L_1)$ is usually defined for all free homotopy classes of
  paths from $L_0$ and $L_1$. On the other hand, the Hamiltonian Floer
  complex $\CF(\varphi)$ is traditionally defined only for
  contractible loops. This is not the case in Definition
  \ref{def:hbr-bar} where we use the Floer complex associated with the
  entire collection $\tpi_1(M)$ of free homotopy classes of loops in
  $M$. In fact, one can associate a variant of barcode entropy
  $\hbr_X$ to any subset $X\subset \tpi_1(M)$, and clearly,
  $\hbr_Y\leq \hbr_X$ when $Y\subset X$.  Then Theorem \ref{thm:A}
  would still obviously hold, for $\hbar_X\leq \hbar$. However,
  Theorems \ref{thm:B} and \ref{thm:C} would in general fail when
  $X\neq \tpi_1(M)$; see Example \ref{ex:hom-classes}. This phenomenon
  is unrelated to algebraic topological conditions for entropy
  positivity (e.g., the Entropy Conjecture or the fundamental group
  growth; see \cite{Al1,Ka:E50,KH,Yo} and references therein): Theorem
  \ref{thm:C} holds for $M=S^2$ and for $M=\T^2$. We will elaborate on
  this in Remark \ref{rmk:hom-classes2}.
\end{Remark}

\begin{Example}[Failure of Theorems \ref{thm:B} and \ref{thm:C} for
  restricted free homotopy classes of loops]
  \label{ex:hom-classes}
  Let $U$ be a small disk in $\T^2$ and let $K$ be a time-dependent
  Hamiltonian with $\supp K\subset U$ generating a positive-entropy
  time-one map $\varphi_K$. Next, let $F$ be an autonomous Hamiltonian
  on $\T^2$ such that under the flow $\varphi^t_F$ for time
  $t\in [0,1]$ every point in $U$ traces a non-contractible loop in a
  class $\fc\in\tpi_1(\T^2)=\Z^2$ and $\varphi_F^1|_U=\id$.  Define
  $H$ by concatenating the Hamiltonians $F$ and $K$, and set
  $\varphi=\varphi^1_H$. Thus $\varphi|_U=\varphi_K$ and
  $\varphi|_{\T^2\setminus U}=\varphi_F|_{\T^2\setminus U}$. It
  follows that $\htop(\varphi)=\htop(\varphi_K)>0$.  However,
  $\hbar_X(\varphi)=0$ whenever $|X\cap \fc\N|<\infty$. Indeed, pick
  $\fx\in \tpi_1(\T^2)$ and assume that $k$ is so large that
  $\fx\neq k\fc$. Then, since every $k$-periodic orbit of $\varphi_H$
  starting in $U$ is in the class $k\fc$, the maps $\varphi_H$ and
  $\varphi_F$ have the same $k$-periodic orbits in the class $\fx$ and
  the actions are equal. A homotopy from $H$ to $F$ which phases $K$
  out does not affect $k$-periodic orbits in the class $\fx$. As a
  consequence, $\varphi_H^k$ and $\varphi_F^k$ have the same filtered
  Floer homology in this class and the same barcode; see, e.g.,
  \cite[Sec.\ 3.2.3]{Gi:Coiso} and references therein for similar
  arguments.  Therefore, $\hbar_X(\varphi_H)=\hbar_X(\varphi_F)$
  whenever $X\cap \fc\N$ is finite. By Theorem \ref{thm:C},
  $\hbar(\varphi_F)=0$ and hence $\hbar_X(\varphi_F)=0$.
\end{Example}

\begin{Remark}[Free homotopy classes of loops, II]
  \label{rmk:hom-classes2} All $k$-periodic orbits of $\varphi_H^t$
  have length bounded from above by $k\|\nabla H\|_{C^0}$. Therefore,
  for a fixed $k\in\N$ only finitely many free homotopy classes can be
  represented by periodic orbits with period up to $k$.  By the Svar\v
  c--Milnor lemma, \cite{Ef}, these classes are contained in the ball
  of radius $R=O(k)$, with respect to the word length metric, in
  $\tpi_1(M)$ centered at the origin. The number of conjugacy classes
  is known to have exponential growth for many hyperbolic-type groups,
  e.g., for $\pi_1(M)$ when $M$ has negative sectional curvature; see
  \cite{CK, HO} and references therein. (We expect this to also be
  true when $M$ a symplectically hyperbolic manifold in the sense of
  \cite{Ke,Po}, but this is not obvious.) Hence, in this case, the
  number of classes populated by periodic orbits of $\varphi$ with
  period less than $k$ can grow exponentially.  When $\varphi$ is such
  that this is indeed the case, we have $\htop(\varphi)>0$; see
  \cite{Iv,Ji} and also \cite{Al1} and references therein. However, in
  general this is not the source of positive entropy in Theorems
  \ref{thm:B} and \ref{thm:C} as Example \ref{ex:hom-classes} and the
  case of $M=S^2$ show.
\end{Remark}

\begin{Remark}[Theorem \ref{thm:C}, periodic orbits and topological
  entropy]
  \label{rmk:po-entropy}
  As stated, Theorem \ref{thm:C} fails when $\dim M>4$; see \cite{Ci}.
  Our admittedly very optimistic conjecture is that \eqref{eq:bar-top}
  holds in all dimensions $C^\infty$-generically for Hamiltonian
  diffeomorphisms.
\end{Remark}

\begin{Remark}[Slow entropy]
  \label{rmk:slow}
  Theorem \ref{thm:A} and Corollary \ref{cor:A} have analogues for
  polynomial growth, slow entropy-type invariants (see, e.g.,
  \cite{FS:slow, KT, Po}) which can be proved essentially by the same
  method with natural modifications. To be more specific, the
  polynomial growth rate of $b_\eps(L,L^k)$, i.e.,
  $\limsup \log^+ b_\eps(L,L^k)/\log k$, gives a lower bound on the
  polynomial growth rate of the volume of $L^k$ and, likewise, the
  polynomial growth rate of $b_\eps(\varphi^k)$ provides a lower bound
  on the polynomial growth rate of the volume of the graph of
  $\varphi^k$.
\end{Remark}

\begin{Remark}[Lower barcode entropy]
  \label{rmk:lower}
  If we replaced the upper limit by the lower limit in the definition
  of barcode entropy, Theorem \ref{thm:A} and Corollary \ref{cor:A}
  would obviously still hold for the so-defined lower barcode
  entropy. However, the proof of Theorem \ref{thm:B} would break down
  because of the upper limit in \eqref{eq:p-htop} and, as a
  consequence, we would not be able to establish Theorem \ref{thm:C}.
\end{Remark}

Theorems \ref{thm:A}, \ref{thm:B} and \ref{thm:C} lead to several
other questions completely left out in this paper. One of them,
alluded to in Example \ref{exm:moving-L}, is if there is a relation
between absolute and relative entropy. Another is a construction of a
Floer theoretic analogue of dynamical Morse entropy introduced in
\cite{BG}. Also, there are several possible alternatives to our
definition of barcode entropy, some of which are examined in
\cite{CGG:growth}. Furthermore, the definition of barcode entropy,
Corollary \ref{cor:A} and Theorems \ref{thm:B} and \ref{thm:C}
generalize to symplectomorphisms symplectically isotopic to the
identity with the Floer complex defined as in \cite{LO}; see
\cite{Pe}. Finally, it would also be interesting to extend the
definition of relative barcode entropy to a broader class of
Lagrangian submanifolds beyond the case when Lagrangian Floer homology
is defined. Such a generalization may have applications to dynamics.

\begin{Remark}[Topological entropy of Reeb flows and
  symplectomorphisms]
  \label{rmk:Reeb}
  As was mentioned in Section \ref{sec:intro}, connections between
  topological entropy of Reeb flows or symplectomorphisms and
  exponential growth of contact or Floer homology of various flavors
  have been extensively studied.  In the Reeb setting, these
  connections generalize well-known relations between the topological
  entropy of the geodesic flow and the geometry or topology of the
  underlying manifold; see \cite{Ka:E50, KH} for further
  references. This study was originally initiated in \cite{MS} and
  since then the subject has been extensively developed in a series of
  papers by Alves and his collaborators (see, e.g., \cite{AASS, AS,
    Al1, Al2, ACH, AM, AP} and also \cite{Me}). In many of these
  results, positivity of topological entropy follows from exponential
  growth of contact or Legendrian contact homology. For instance, in
  \cite{AM}, the authors construct a contact structure on
  $S^{2n-1\geq 7}$ such that every Reeb flow has positive entropy due
  to exponential growth of wrapped Floer homology, which is a robust
  feature independent of the contact from. This approach is
  reminiscent of the Entropy Conjecture and other results where
  algebraic topological features of a map give a lower bound on the
  topological entropy; see \cite{Ka:E50, KH, Yo} for further
  references. The case of symplectomorphisms or contactomorphisms is
  very similar in spirit and closely related to the Reeb setting, and
  the results usually rely again on exponential growth of a variant of
  Floer homology; see, e.g., \cite{Da1, Da2, FS}. On the other hand,
  for compactly supported Hamiltonian diffeomorphisms, there is no
  Floer homology growth. Topological entropy is instead related to
  barcode entropy, a non-robust invariant of the Floer complex.
\end{Remark}

\begin{Remark}
  \label{rmk:Reeb-barcode}
  Since the first version of this work appeared as a preprint,
  significant progress has been made in extending our results and
  constructions to Reeb flows. In particular, barcode entropy is
  defined and the analogues Theorems \ref{thm:A}, \ref{thm:B} and
  \ref{thm:C} are proved for geodesic flows in \cite{GGM}.  More
  generally, for Reeb flows on the boundary of a Liouville domain, the
  barcode entropy is defined in \cite{FLS}, where a version of
  Corollary \ref{cor:A} is also proved, and Theorems \ref{thm:B} and
  \ref{thm:C} are established in \cite{CGGM:entropy}.  Moreover,
  Theorem \ref{thm:A} for wrapped Floer homology is proved in
  \cite{Fe}.
\end{Remark}

\begin{Remark}[Forcing and connections with Hofer's geometry in
  dimension two] Connections of topological entropy in dimension two
  with Hofer's geometry have been recently explored in
  \cite{AM,CM}. The main theme of the latter work is closely related
  to forcing of topological entropy; see \cite{LCT} and references
  therein. The key result of the former is Hofer lower semicontinuity
  of topological entropy for Hamiltonian diffeomorphisms of closed
  surfaces.
\end{Remark}

\section{Preliminaries}
\label{sec:prelim}

\subsection{Conventions and notation}
\label{sec:conv}
For the reader's convenience, we set here our conventions and notation
and briefly recall some basic definitions; see, e.g., \cite{Us} for
further details and references. The reader may want to consult this
section only as needed.

Throughout the paper we assume that the underlying symplectic manifold
$(M,\omega)$ is either closed or ``tame'' at infinity (e.g., convex)
so that the Gromov compactness theorem holds; see, e.g.,
\cite{McDS}. All Lagrangian submanifolds are assumed to be closed
unless explicitly stated otherwise, and \emph{monotone}, i.e., for
some $\kappa\geq 0$, we have
$\left<\omega, A\right>=\kappa \left<\mu_L,A\right>$ for all
$A\in\pi_2(M,L)$, where $\mu_L\in \H^2(M,L;\Z)$ is the Maslov class.
Then $M$ is also monotone with monotonicity constant $2\kappa$, i.e.,
$\left<\omega, A\right>=2\kappa \left<c_1(TM),A\right>$ for all
$A\in\pi_2(M)$. As in \cite{Us}, we define the \emph{minimal Maslov
  number} of $L$ as the positive generator $N_L$ of the subgroup of
$\Z$ generated by $\left<\mu_L, w\right>$ for all maps
$w\colon \AA\to M$ from the cylinder $\AA:=S^{1}\times [0, 1]$ to $M$
sending the boundary $\p\AA$ to $L$. When this group is trivial, we
set $N_L=\infty$.  In what follows, we require that $N_L\geq 2$ unless
$\kappa=0$.  Note that this definition allows $[\omega]$ to vanish on
$\pi_2(M,L)$ and thus includes weakly exact Lagrangian submanifolds.

Alternatively, as has already been pointed out, one can require $L$ to
be oriented, relatively spin and weakly unobstructed after bulk
deformation as in \cite{FOOO} and replaced the coefficient field
$\F=\F_2$ by a field of zero characteristic. We expect that Theorem
\ref{thm:A} extends to this setting.

A \emph{Hamiltonian diffeomorphism} $\varphi=\varphi_H=\varphi_H^1$ is
the time-one map of the time-dependent flow (i.e., a \emph{Hamiltonian
  isotopy}) $\varphi^t=\varphi_H^t$ of a 1-periodic in time
Hamiltonian $H\colon S^1\times M\to\R$, where $S^1=\R/\Z$.  The
Hamiltonian vector field $X_H$ of $H$ is defined by
$i_{X_H}\omega=-dH$.  All Hamiltonians are assumed to be compactly
supported.

The $k$-th iterate $\varphi^k$ is viewed as the time-$k$ map of
$\varphi^t_H$. The $k$-periodic \emph{points} of $\varphi$ are in
one-to-one correspondence with the $k$-periodic \emph{orbits} of $H$,
i.e., of the time-dependent flow $\varphi_H^t$. A $k$-periodic orbit
$x$ of $H$ is said to be \emph{non-degenerate} if the linearized
return map $D_{x(0)}\varphi^k \colon T_{x(0)}M \to T_{x(0)}M$ has no
eigenvalues equal to one. A Hamiltonian $H$ is \emph{non-degenerate}
if all of its 1-periodic orbits are non-degenerate and \emph{strongly
  non-degenerate} when all of its periodic orbits are non-degenerate.

Recall that the \emph{Hofer norm} of $\varphi$ is defined as
$$
\|\varphi\|_{\hn}=\inf_{H}\int_{S^1}\big(\max_M H_t-\min_M H_t\big)\, dt,
$$
where the infimum is taken over all 1-periodic in time Hamiltonians
$H$ generating $\varphi$, i.e., $\varphi=\varphi_H$; see \cite{Ho,
  LMcD, Po:Hofer, Vi}.  The \emph{Hofer distance} between two
Hamiltonian isotopic Lagrangian submanifolds $L$ and $L'$ is
$$
d_{\hn}(L,L')=\inf\left\{\|\varphi\|_{\hn}\mid \varphi(L)=L'\right\};
$$
see \cite{Ch}.

\subsection{Filtered Lagrangian Floer complex}
\label{sec:Lagr}
In this section we very briefly spell out the definition of the
filtered Lagrangian Floer complex we use in this paper. There are
various settings and levels of generality one could work with here.
For the sake of simplicity, we focus on the case of Hamiltonian
isotopic closed monotone Lagrangian submanifolds. (We will touch upon
other settings in Remark \ref{rmk:filtr-other}.) For such Lagrangian
submanifolds, Floer homology was originally defined in \cite{Oh},
albeit in a somewhat different algebraic setting. In this short
narrative we adopt the framework from \cite{Us} with only minor
modifications. We refer the reader to this work and, of course, to
\cite{FOOO} for a much more detailed treatment and further references.

\subsubsection{Floer complex}
\label{sec:Floer-complex}
Let $M$ be a symplectic manifold which is supposed to be sufficiently
``tame'' at infinity (e.g., compact or convex) to guarantee that
compactness theorems hold, and let $L$ and $L'$ be closed monotone
Lagrangian submanifolds intersecting transversely. We assume that $L$
and $L'$ are Hamiltonian isotopic to each other, i.e., there exists a
Hamiltonian isotopy $\varphi_F^t$, $t\in [0, 1]$, such that
$L'=\varphi_F(L)$. For the time being we will treat the isotopy
$\varphi_F^t$ as a part of the data. Furthermore, we require that
$N_{L}\geq 2$, where $N_{L}=N_{L'}$ is the minimal Chern number as
defined in Section \ref{sec:conv}.

Let $\PP(L,L')$ be the space of smooth paths in $M$ from $L$ to $L'$
and $\pi_1(M;L,L')$ be the set of its connected components. For
instance, the intersection points of $L$ and $L'$ are elements of
$\PP(L,L')$; however, these elements might be in different connected
components of $\PP(L,L')$. Fix a reference path $x_\fc$ in each
$\fc\in \pi_1(M;L,L')$. A \emph{capping} $w$ of a path $x\in\fc$ is a
homotopy of $x$ to $x_\fc$ (with end-points on $L$ and $L'$), taken up
to a certain equivalence relation. Namely, two cappings $w$ and $w'$
are equivalent if and only if the cylinder $v$ obtained by attaching
$w'$ to $w$ with the reversed orientation has zero symplectic area and
zero Maslov number. Furthermore, we say that two such cylinders are
equivalent when their symplectic areas and Maslov numbers are equal.
When $w$ and $w'$ are not equivalent, we call $v$, taken up to this
equivalence relation, and also $(x,w')$ a \emph{recapping} of $(x,w)$
and write $(x,w')=:(x,w)\# v$. We usually suppress a capping in the
notation. One can assign a well-defined Maslov index to a capped path
from $L$ to $L'$ by fixing also a trivialization of $TM$ along
$x_\fc$.

For the sake of simplicity, the ground field $\F$ throughout the paper
is $\F_2$. (However, in the Hamiltonian setting of Corollary
\ref{cor:A} and Theorems \ref{thm:B} and \ref{thm:C} we can work with
any ground field $\F$.) The Floer complexes we mainly consider are
finite-dimensional vector spaces over the ``universal'' \emph{Novikov
  field} $\Lambda$ formed by formal sums
\begin{equation}
  \label{eq:lambda}
\lambda=\sum_{j\geq 0} f_j T^{a_j},
\end{equation}
where $f_j\in \F$ and $a_j\in\R$ and the sequence
$a_j$ (with $f_j\neq 0$) is either finite or $a_j\to\infty$.

The Floer complex $\CF(L,L')$, or just $\CC$ for brevity, is generated
by the intersections $L\cap L'$ with arbitrarily fixed cappings. The
differential $\p_{\Fl}$ is defined by the standard formula counting
holomorphic strips $u$ with boundary components on $L$ and $L'$,
asymptotic to the intersections with now all possible cappings and the
symplectic area $\omega(v)$ of the recapping $v$ contributing the term
$T^{\omega(v)}$ to the differential.

To be more precise, denote the generators by $x_i$. Thus
\begin{equation}
  \label{eq:CC}
\CC=\bigoplus_i \Lambda x_i.
\end{equation}
Then, assuming that the underlying almost complex structure is
regular, we have
\begin{equation}
  \label{eq:d-Fl1}
\p_{\Fl} x_i=\sum_j \lambda_{ij} x_j,
\end{equation}
where
\begin{equation}
  \label{eq:d-Fl2}
\lambda_{ij}=\sum_v f_v T^{\omega(v)}\in \Lambda
\end{equation}
Here the sum extends over all recappings $v$ of $x_j$ such that the
Maslov index difference of $x_i$ and $x_j\# v$ is 1 and $f_v\in \F$ is
the parity of the number of holomorphic strips $u$ asymptotic to $x_i$
and $x_j$, and such that the original fixed capping of $x_i$ is
equivalent to the capping obtained by attaching $x_j\# v$ to these
strips.

This complex is not graded, due to our choice of the Novikov field, or
only $\Z_2$-graded.  We denote the Floer homology, i.e., the homology
of $\CC$, by $\HF(L,L')$ or just $\HF(L)$. This is also a
finite-dimensional vector space over $\Lambda$.

The complex $\CC$ and its homology $\HF(L,L')$ split into a direct sum
of complexes $\CC_c=\CF_\fc(L,L')$ and homology groups $\HF_\fc(L,L')$
over $\fc \in \pi_1(M;L,L')$.

\subsubsection{Action filtration}
\label{sec:filtration}
To define the action filtration on $\CC=\CF(L,L')$ it is beneficial to
look at this complex from a different perspective and this is where
the condition that $L$ and $L'$ are Hamiltonian isotopic, which has
not been explicitly used so far, becomes essential. Namely, applying
the inverse Hamiltonian isotopy $\big(\varphi_F^t\big)^{-1}$ to paths
from $L$ to $L'$ we obtain paths from $L$ to itself and thus a
homeomorphism between $\PP(L,L')$ and $\PP(L,L)$ and a bijection
between $\pi_1(M;L,L')$ and $\pi_1(M;L) :=\pi_1(M;L,L)$. The
intersections $L\cap L'$ turn into Hamiltonian chords from $L$ to
itself for $\big(\varphi_F^t\big)^{-1}$. Likewise, the reference paths
$x_\fc$ become the reference paths $y_\fc$ from $L$ to $L$ and a
capping of $y\in \PP(L,L)$ is a homotopy from $y\in\fc$ to $y_\fc$
with end-points on $L$ up to the same equivalence relation.

This procedure turns the Cauchy--Riemann equation into the Floer
equation
$$
\frac{\p u}{\p s}+J\frac{\p u}{\p t}=-\nabla H_t(u)
$$
where $(s,t)\in\R\times [0, 1]$ and $u\colon \R\times [0,1]\to M$, and
$H_t= -F_t\circ \varphi_F^t$ is a Hamiltonian generating the inverse
of the isotopy $\varphi_F^t\colon M\to M$ from $L$ to
$L'$. Holomorphic strips with boundaries on $L$ and $L'$ and
asymptotic to $L\cap L'$ become solutions $u$ of the Floer equation
with boundary on $L$ asymptotic to Hamiltonian chords. The Floer
differential now counts solutions of the Floer equation. The
Hamiltonian action of a capped path $\by=(y,w)$ is given by the
standard formula
\begin{equation}
  \label{eq:action}
\CA(\by)=-\int_w\omega+ \int_0^1 H_t(y(t))\,dt.
\end{equation}

The Floer differential is strictly action-decreasing, and we obtain
the required action filtration on $\CC$ with the convention that for
$\lambda\in\Lambda$ given by \eqref{eq:lambda}, we have
\begin{equation}
\label{eq:filt1}
\nu(\lambda):=\min \{ a_j\}, \text{ where } f_j\neq 0,
\end{equation}
and
\begin{equation}
\label{eq:filt2}
\CA\left(\sum \lambda_i y_i\right):=\max_i\CA(\lambda_i y_i),
\text{ where } \CA(\lambda_i y_i):=\CA(y_i)-\nu(\lambda_i). 
\end{equation}

A few remarks are due at this point. First of all, the action
filtration depends on the choice of the reference paths $x_\fc$ (or
$y_\fc$). Namely, on every direct summand $\CC_\fc$ a change of the
reference path shifts the filtration by a constant. Thus the
filtration is well-defined only up to independent shifts on these
summands. Clearly, this ambiguity does not affect the barcode of $\CC$
introduced in Section \ref{sec:barcodes-def}. In particular, the bar
length and the number of bars of a given length and $b_\eps(L,L')$ are
independent of the choice of reference paths.

Furthermore, while the generators of $\CC$ are (capped) intersections,
the differential genuinely depends on the background almost complex
structure or, in the second interpretation, on the almost complex
structure and the Hamiltonian $F$. However, the resulting complexes
are chain homotopy equivalent with homotopy preserving the filtration
up to a shift. Thus, the number of bars of a given length either and
$b_\eps(L,L')$ are again well-defined.

The Floer homology $\HF(L,L')$, as a vector space over $\F$, inherits
the action filtration from $\CC$. To be more specific, we have a
family of vector spaces $\HF^a(L,L')$ over $\F$, parametrized by
$a\in \R$, where $\HF^a(L,L')$ is the homology of the subcomplex of
$\CC$ comprising the chains $\sum \lambda_i y_i$ with action less than
$a$. The inclusion of subcomplexes for $a<a'$ gives rise to natural
maps $\HF^a(L,L')\to \HF^{a'}(L,L')$.

Essentially by construction, we have the ``Poincar\'e
duality''. Namely, for a suitable choice of the auxiliary data,
\begin{equation}
  \label{eq:PD0}
\CF(L',L)=\CF(L,L')^* 
\end{equation}
over $\Lambda$ with ``inverted'' filtration.  (Here we used the fixed
basis $\{x_i\}$ to identify the vector spaces $\CF(L',L)$ and
$\CF(L,L')$ and their duals, and then the Floer differential in
$\CF(L',L)$ turns into the adjoint of the Floer differential in
$\CF(L,'L)$.)  As a consequence, $\HF(L',L)=\HF(L,L')^*$.

We also note that whether or not $L$ and $L'$ are transverse, only a
finite collection of elements in $\pi_1(M;L,L')\cong\pi_1(M;L)$ is
represented by Hamiltonian chords. Thus $\CC_{\fc}=0$ for all but a
finite collection of $\fc\in\pi_1(M;L,L') \cong\pi_1(M;L)$; this
collection might however depend on the Hamiltonian isotopy; cf.\
\cite[Prop.\ 6.2]{Us}.

\begin{Remark} In the situation we are interested in there is usually
  a natural choice of the Hamiltonian isotopy. Namely, when
  $L'=\varphi^k(L)$ as in Section \ref{sec:def}, the isotopy comes
  from $\varphi_H^t$ with time interval $[0, k]$ scaled to $[0, 1]$.
  In Section \ref{sec:def2}, $L=L_0$ and $L'=\varphi^k(L_1)$, where
  $L_0$ and $L_1$ are assumed to be Hamiltonian isotopic, and the
  isotopy from $L$ to $L'$ is obtained by concatenating the two
  isotopies.
\end{Remark}

\begin{Remark}[Exact Lagrangians, I]
  \label{rmk:filtr-other}
  Finally, note that there are of course other settings where the
  filtration on $\CC$ is defined and has the desired properties. One
  is when $(M,\omega)$ is an exact symplectic manifold (i.e., $\omega$
  is exact) meeting certain additional requirements at infinity, and
  $L$ and $L'$ are exact Lagrangian submanifolds (i.e., the
  restrictions of a primitive of $\omega$ to $L$ and $L'$ are exact),
  not necessarily Hamiltonian isotopic. Then $\Lambda=\F$. In this
  case, sometimes we can even allow one of the manifolds to be
  non-compact. For instance, $L$ can be Hamiltonian isotopic to the
  zero section in a cotangent bundle of a closed manifold and $L'$ can
  be a fiber.
\end{Remark}

\subsubsection{Period group, the action spectrum and the action
  difference set}
\label{sec:spectrum}
In this subsection we do not assume that $L$ and $L'$ are transverse
unless explicitly stated otherwise.

For every class $\fc\in\pi_1(M;L)$, denote by $\Gamma_\fc$ the
subgroup of $\R$ formed by the integrals $\omega(v)$ for all maps
$v\colon \AA=S^1\times [0,1]\to M$ sending $\p\AA$ to $L$ and with the
meridian $v(\{0\}\times [0,1])$ in $\fc$. (Such annuli $v$, with $\fc$
fixed, taken up to homotopy or the equivalence relation discussed
above, form a group of recappings of paths in $\fc$ and $\Gamma_\fc$
is the group of action changes resulting from recappings.)  The
monotonicity condition forces this group to be discrete for the
``unit'' class $\fc=1$ of the constant path. In general, $\Gamma_\fc$
depends on $\fc$ and can be discrete for some classes $\fc$ and dense
for other classes; see Example \ref{ex:Gamma}. In the former case we
denote by $\lambda_\fc$ its positive generator and call $\fc$
\emph{toroidally rational}. As usual, we set $\lambda_\fc=\infty$ when
$\fc$ is \emph{atoroidal}, i.e., $\Gamma_\fc=\{0\}$.  Finally, let
$\Gamma\subset \R$ be the subgroup generated by the union of all
$\Gamma_\fc$. We will refer to $\Gamma_\fc$ as a \emph{period group}
and to $\Gamma$ as the \emph{complete period group}. (Hypothetically
it is possible that $\Gamma$ is dense even when all $\Gamma_\fc$ are
discrete, but we do not have an example when this happens.)

The \emph{action spectrum} $\CS_\fc(L,L')\subset \R$ (for a class
$\fc$) is the collection of actions for all capped chords in $\fc$ or,
equivalently, capped intersections $L\cap L'$ in the class
$\fc$. Clearly, a change of a reference chord $y_\fc$ results in a
shift of $\CS_\fc(L,L')$ by a constant. Furthermore, as we have
already mentioned, only a finite collection of free homotopy classes
can contain Hamiltonian chords, and hence $\CS_\fc(L,L')=\emptyset$
for all but a finite set of the classes $\fc$. In general, this
collection of classes depends on the Hamiltonian isotopy from $L$ to
$L'$, and a change of the isotopy results in action shifts and
``relabeling'' of the action spectra; cf. \cite[Prop.\ 6.2]{Us}. We
denote by $\CS(L,L')$ the union of the sets $\CS_\fc(L,L')$ for all
$\fc$.

The following is a list of standard properties of the action spectrum
which are relevant to our purposes although not directly used in the
proofs:
\begin{itemize}

\item The sets $\CS_\fc(L,L')$ are non-empty for only a finite
  collection of elements $\fc\in \pi_1(M;L)$.

\item The sets $\CS_\fc(L,L')$, and hence their union $\CS(L,L')$,
  have zero measure.

\item The set $\CS_\fc(L,L')$ is countable whenever the intersections
  $L\cap L'$ in the class $\fc$ are transverse. (The converse is not
  true.)

\item The set $\CS_\fc(L,L')$ is invariant under translations by
  elements of $\Gamma_\fc$.
    
\item The set $\CS_\fc(L,L')\neq \emptyset$ is compact if and only if
  $\Gamma_\fc=\{0\}$.

\item The set $\CS_\fc(L,L')\neq \emptyset$ is closed if and only if
  $\Gamma_\fc$ is discrete.

\item The set $\CS_\fc(L,L') \neq \emptyset$ is dense if and only if
  $\Gamma_\fc$ is dense.
    
\end{itemize}

We denote by $\CalD_\fc (L,L')\subset \R$ the set comprising all
action differences between the capped intersections in $\fc$, i.e.,
$$
\CalD_\fc(L,L'):=\CS_\fc(L,L')-\CS_\fc(L,L') = \{a-b\mid a, b\in
\CS_\fc(L,L')\}.
$$
This set is countable whenever the intersections $L\cap L'$ in the
class $\fc$ are transverse, and dense when $\Gamma_\fc$ is dense. (The
converse is not true in both cases.) Furthermore, $\CalD_\fc(L,L')$ is
closed if and only if $\CS_\fc(L,L')$ is closed.  An important point
is that in general $\CalD_\fc (L,L')$ need not be a zero measure
set. (For instance, $\CalD_\fc (L,L')=[-1,1]$ when $\CS_\fc(L,L')$ is
the standard Cantor set; see, e.g., \cite[p.\ 87]{GO}.)  We also let
$\bar{\CalD}_\fc(L,L')$ to be the closure of $\CalD_\fc (L,L')$ and
$\bar{\CalD}(L,L')$ stand for the union of the closures
$\bar{\CalD}_\fc(L,L')$ for all $\fc\in\pi_1(M;L)$. This union is also
closed since $\CalD_\fc (L,L')\neq \emptyset$ only for a finite
collection of classes $\fc$.

Applying these constructions to the Hamiltonian diffeomorphism
$\id\times \varphi$ of the symplectic square
$\big(M\times M, -\omega \oplus \omega\big)$, we obtain the filtered
Floer complex $\CC:=\CF(\varphi)$ of $\varphi$. This complex is
isomorphic to the standard $\Z_2$-graded Floer complex over $\Lambda$
generated by 1-periodic orbits in all free homotopy classes
$\fc\in\tpi_1(M)$. This distinction from the more customary
construction limited to contractible orbits is essential; see Section
\ref{sec:discussion}. The free homotopy class of a $k$-periodic orbit
of $\varphi_H^t$ is completely determined by the Hamiltonian
diffeomorphism $\varphi_H$ and is independent of the isotopy. This is
a consequence of the proof of Arnold's conjecture which in the
monotone case was established in \cite{Fl89}.  As we have already
pointed out, in the Lagrangian setting the situation is more
complicated.  The period group $\Gamma_\fc\subset \R$ is formed by the
integrals $\omega(v)$ where now $v\colon \T^2=S^1\times S^1\to M$ with
the meridian $v(\{0\}\times S^1)$ in $\fc$, and $\lambda_\fc$ is the
postive generator of $\Gamma_\fc$ when this groups is non-trivial and
$\lambda_\fc=0$ otherwise. (The homotopy classes of such tori form a
group.) As above, we denote by $\Gamma$ the complete period group,
i.e., the group generated by the union of the groups $\Gamma_\fc$.

\begin{Example}
  \label{ex:Gamma}
  Assume first that $M=\T^2$ with area $a$ and let as above
  $\tpi_1(M)$ be the set of free homotopy classes of loops in $M$,
  i.e., conjugacy classes in $\pi_1(M)$. Then
  $\tpi_1(M)=\pi_1(M)=\Z^2$ and $\lambda_\fc=a$ for every primitive
  class $\fc\neq 0$ and, using additive notation,
  $\lambda_{k\fc}=k\lambda_\fc$ for every $k\geq 0$; see \cite{Or}. A
  similar description applies to $\T^{2n}=\T^2\times\cdots\times \T^2$
  equipped with the standard symplectic structure for which all
  factors $\T^2$ have the same area $a$, although now
  $\lambda_\fc\in a\Z$ may depend on $\fc$ even when $\fc$ is
  primitive. A surface $\Sigma$ of genus $g\geq 2$ or, more generally,
  a symplectically hyperbolic manifold (see, e.g., \cite{Ke,Po}) is
  atoroidal, i.e,. $\Gamma_\fc=0$ for all $\fc$.  Next, let
  $M=\T^2\times\Sigma$.  Then $\lambda_\fc$ is completely determined
  by the projection of $\fc$ to $\T^2$ and then calculated as in the
  previous example.  Finally, let $M=\T^2_1\times\T^2_a$, where the
  first torus has area 1 and the second has an irrational area
  $a$. Then classes $\fc$ lying in $\pi_1(\T^2_1)$ and $\pi_1(\T^2_a)$
  are toroidally rational and $\lambda_\fc$ is calculated as
  above. For other classes, the group $\Gamma_\fc$ is dense in $\R$.
\end{Example}

\begin{Remark}[Growth of the action spectrum]
  \label{rmk:spec} As we have pointed out in the introduction, one
  difficulty in thinking about topological entropy in terms of Floer
  theory is that in many cases the ``effective'' diameter of the
  action spectrum $\CS\big(\varphi^k\big)$ grows sub-exponentially
  with $k$ even when the cardinality of $\CS\big(\varphi^k\big)$ grows
  super-exponentially (see Remark \ref{rmk:PO-Growth}). For instance,
  assume that $\omega$ is atoroidal. Then it is not hard to show that
  for a suitable choice of reference loops in $\tpi_1(M)$ the diameter
  of $\CS\big(\varphi^k\big)$ grows sub-exponentially when the
  pullback of $\omega$ to the universal covering of $M$ has a
  primitive with sub-exponential growth. For instance, when $M$ is a
  surface of genus $g\geq 2$ or, more generally, $M$ is symplectically
  hyperbolic, the diameter grows linearly.

  When $M$ is simply connected and monotone, one can pin every finite
  bar to be contained in the interval
  $\big[0,\lambda_M+\|\varphi^k\|_{\hn}\big]\subset
  \big[0,\lambda_M+k\|\varphi\|_{\hn}\big]$, where $\lambda_M$ is the
  rationality constant; see \cite{Us} and Remark
  \ref{rmk:pinned-bars}.  (Moreover, under these assumptions on $M$,
  we can replace the Hofer norm by the $\gamma$-norm resulting in a
  bounded interval in some instances, e.g., for $\CP^n$; see
  \cite{EP,KS}.) A similar upper bound holds for $M=\T^{2n}$, but now
  one has to also use the result from \cite{Or} mentioned in
  Example~\ref{ex:Gamma}.
  \end{Remark}

\subsubsection{Floer package and shrinking the Novikov field}
\label{sec:Floer-pck}

Purely formally, the above constructions can be summarized as the
following ``Floer package'':
\begin{itemize}

\item The finite-dimensional vector space $\CC$ over $\Lambda$ with a
  fixed set of generators $x_i$; see~\eqref{eq:CC}.

\item The differential $\p_{\Fl}$; see \eqref{eq:d-Fl1}.

\item The action filtration on $\CC$ given by \eqref{eq:filt1} and
  \eqref{eq:filt2} such that $\p_{\Fl}$ is strictly action decreasing.

\end{itemize}
We emphasize that in this package there are no algebraic constraints
on the actions $\CA(x_i)$ other than that $\p_{\Fl}$ is
\emph{required} to be strictly action decreasing.

Next, observe that all exponents occurring in $\lambda_{ij}$ in
\eqref{eq:d-Fl2} are in $\Gamma$ or in $\Gamma_\fc$ when we focus on
$\CC_\fc$. Thus we could have replaced in the construction the
universal Novikov field $\Lambda$ by the field $\Lambda^\Gamma$
defined in a similar fashion, but with all exponents $a_j$ in
\eqref{eq:lambda} in $\Gamma$ (or in $\Gamma_\fc$). Moreover, we could
have worked with the field $\Lambda^G$ for any subgroup $G$ of $\R$
containing $\Gamma$, resulting in the same barcode. (For instance,
$\Lambda=\Lambda^{\R}$.) When essential, we will write
$\CC(\Lambda^\Gamma)$ to indicate the Novikov field.

On a purely formal level of a Floer package, we can define $\Gamma$ as
a countable subgroup of $\R$ generated by all exponents occurring in
$\lambda_{ij}$ and then use the field $\Lambda^G$ whenever
$\Gamma \subset G$. (This group $\Gamma$ can potentially be smaller
that the period group $\Gamma$ defined in Section \ref{sec:spectrum}
geometrically.)

Our choice to mainly work with $\Lambda$ rather than $\Lambda^\Gamma$
is dictated by expository considerations: it is convenient to have the
Novikov ring independent of the geometrical setting.

\subsection{Persistent homology and barcodes}
\label{sec:barcodes}

\subsubsection{Barcodes}
\label{sec:barcodes-def}
In this section we briefly recall a few basic facts and definitions
concerning persistent homology and barcodes in the context of Floer
theory. We refer the reader to \cite{PRSZ} for a very detailed
introduction and a discussion in much broader context. Here, treating
barcodes in the framework of Lagrangian Floer complexes, we closely
follow \cite{UZ}, with some minor simplifications. (A different
although equivalent approach to barcodes in this context is put forth
in \cite{KS}. That approach is, however, slightly less convenient for
our purposes.)

The key difference between the settings used here and in \cite{UZ} is
that Floer complexes in \cite{UZ} are $\Z$-graded, while in this
paper, as in \cite{CGG}, the complexes are ungraded (or
$\Z_2$-graded), i.e., in \cite{UZ} the differential comprises maps
between different spaces over $\Lambda$, while here $\p_{\Fl}$ is a
map from $\CC$ to itself. However, the constructions, results and
proofs from \cite{UZ} carry over to our framework. For instance, one
can apply these results to the ``two-storey'' $\Z$-graded complex
$$
0\to\CC/\ker \p_{\Fl}\stackrel{\p_{\Fl}}{\longrightarrow} \ker
\p_{\Fl}\to 0
$$
over $\Lambda$.

As in Section \ref{sec:Lagr}, consider the filtered Lagrangian Floer
complex $\CC:=\CF(L,L')$ of a transverse pair of closed monotone
Lagrangian submanifolds $L$ and $L'$.  A finite set of vectors
$\xi_i\in \CC$ is said to be \emph{orthogonal} if for any collection
of coefficients $\lambda_i\in\Lambda$ we have
\[
  \CA\big(\sum\lambda_i\xi_i\big)=\max \CA(\lambda_i\xi_i),
\]
where $\CA$ is defined by \eqref{eq:filt1} and \eqref{eq:filt2}. It is
not hard to show that an orthogonal set is necessarily linearly
independent over $\Lambda$.

\begin{Definition}
  \label{def:sing_decomp}
  A basis $\Sigma=\{\alpha_i, \,\eta_j, \,\gamma_j\}$ of $\CC$ over
  $\Lambda$ is said to be a \emph{singular value
      decomposition} if
\begin{itemize}
\item $\p_{\Fl}\alpha_i=0$,
\item $\p_{\Fl}\gamma_j=\eta_j$,
\item the basis is orthogonal.
\end{itemize}
\end{Definition}

It is shown in \cite[Sections 2 and 3]{UZ} that $\CC$ admits a
singular value decomposition. Ordering the pairs $(\eta_j,\,\gamma_j)$
by the action difference, we have
$$
\CA(\gamma_1)-\CA(\eta_1)\leq\CA(\gamma_2)-\CA(\eta_2)\leq\ldots .
$$
This increasing sequence together with $\dim_\Lambda\HF(L,L')$
infinite bars (corresponding to the basis elements $\alpha_i$) is
referred to as the \emph{barcode} of $\CC$ and denoted by $\CB(\CC)$
or $\CB(L,L')$.  Fixing a class $\fc$ we also obtain the barcode
$\CB_\fc(L,L')$.  This is a multiset. As a consequence of the
discussion in Section \ref{sec:Floer-pck},
$\CB_\fc(L,L')\subset \CalD_\fc(L,L')\cup\{\infty\}$ if we ignore
multiplicities. The barcode is independent of the choice of a singular
value decomposition and auxiliary data involved in the construction of
$\CC$; see, e.g., \cite{UZ}. As in \eqref{eq:b-eps}, we set
\begin{equation}
  \label{eq:b-eps2-tr}
  b_\eps(L,L')=b_\eps(\CC):=
  \big|\{\beta\in \CB(\CC)\mid \beta>\eps\}\big|.
\end{equation}

The complexes $\CF(L',L)=\CF(L,L')^*$ and $\CF(L,L')$ have the same
barcode:
$$
\CB(L,L')=\CB(L',L);
$$
cf.\ \cite{UZ}. (This does not follow directly from \eqref{eq:PD0}
because a singular value decomposition for $\CF(L,L')$ need not be a
singular value decomposition for $\CF(L',L)$ when the two spaces are
identified by fixing a basis of capped intersections.) Hence,
\begin{equation}
  \label{eq:PD}
b_\eps(L,L')=b_\eps(L',L).
\end{equation}
We also set $b(\CC)=b(L,L'):=|\CB(L,L')|$ to be the
total number of bars in the barcode. Then
$$
|L\cap L'|=\dim_\Lambda\CF(L,L') = 2b(L,L')-\dim_\Lambda\HF(L,L')\geq
b(L,L').
$$
In particular, $b_\eps(L,L')$ gives a lower bound for the number of
intersections:
\begin{equation}
\label{eq:intersections-b}
|L\cap L'| \geq b_\eps(L,L').
\end{equation}
We emphasize that here and throughout this section we have assumed
that $L\pitchfork L'$.

Clearly,
$$
\CB\big(\psi(L),\psi(L')\big)=\CB(L,L')
$$
for any symplectomorphism $\psi\colon M\to M$, and hence 
 \begin{equation}
  \label{eq:inv}
b_\eps\big(\psi(L),\psi(L')\big)=b_\eps(L,L').
\end{equation}
Furthermore, $b_\eps(L,L')$ is constant as a function of $\eps$ on any
interval in the complement of $\bar{\CalD}(L,L')$.

The longest finite bar $\beta_{\max}(L,L')$ in $\CB(L,L')$ is called
the \emph{boundary depth}. As shown in \cite{Us},
$\beta_{\max}(L,L')\leq d_{\hn} (L,L')+\beta_{\max}(L,L)$ and, in
particular, $\beta_{\max}(\varphi)\leq \|\varphi\|_{\hn}$. (In
general, $\beta_{\max}(L,L)$ can be non-zero. For instance, for a
displaceable circle $L$ bounding a disk of area $a$ in a surface,
$\CB(L,L)$ has only one bar, which has length $a$, and hence
$\beta_{\max}(L,L)=a>0$.)

\begin{Remark}[Pinned bars]
  \label{rmk:pinned-bars}
  Our definition of a barcode is a simplification of the standard one
  in which, when $\Gamma=\{0\}$, a barcode is the collection of the
  intervals $[\CA(\eta_j), \CA(\gamma_j)]$ and
  $[\CA(\alpha_i),\infty)$ rather than just their length; see, e.g.,
  \cite{PRSZ}. Hence, a bar is ``pinned'' to its beginning in
  $\R$. When working over the universal Novikov field $\Lambda$, the
  situation is similar, but now $\R$ is replaced by
  $\R/\Gamma$. Namely, with more care in the choice of a singular
  value decomposition, a finite bar is determined by $\CA(\gamma_i)$,
  viewed as an element of $\R/\Gamma$, and the difference
  $\CA(\gamma_i)-\CA(\eta_i)$, and an infinite bar is the pair
  $\CA(\alpha_i)\in \R/\Gamma$ and $\infty$; see \cite{UZ}. Thus a bar
  is ``pinned'' to a point in $\CS(L,L')/\Gamma$. However, then the
  position of each bar depends on the auxiliary data (reference paths
  and the Hamiltonian isotopy from $L$ to $L'$), and the shift
  resulting from a change of the data also depends on the connected
  component $\fc\in \pi_1(M; L,L')$. Since here we are mainly
  concerned with counting bars of length greater than $\eps$, the
  simplified definition is more convenient for our purposes.
 \end{Remark}

 The barcodes are fairly insensitive to small perturbations of the
 Lagrangian submanifolds with respect to the Hofer metric. Namely,
 assume that $d_{\hn}(L',L'')<\delta/2$. Then
\begin{equation}
  \label{eq:insensitive1}
b_{\eps+\delta}(L,L') \leq b_\eps(L,L'')\leq b_{\eps-\delta}(L,L').
\end{equation}
We refer the reader to \cite{KS, PRSZ, UZ} for the proof.

These constructions apply \emph{verbatim} to the direct summand
$\CF_\fc(L,L')$ for $\fc\in \pi_1(M;L)$.

It is not hard to extend the notion of barcode to the situation where
$L$ and $L'$ are not transverse. However, here we are interested only
in counting bars of length above a fixed treshold and we extend its
definition to the non-transverse case in an \emph{ad hoc} manner in
the next section.

Specializing to the case where $L$ is the diagonal in $M\times M$ and
$L'$ is the graph of a strongly non-degenerate Hamiltonian
diffeomorphism $\varphi\colon M\to M$, we obtain the barcode
$\CB(\varphi)$ of $\CF(\varphi)$ (for all free homotopy classes of
loops $\fc\in\tpi_1(M)$), which is again insensitive to small
perturbations of $\varphi$ with respect to the Hofer metric and hence
to $C^\infty$-small perturbations, and
$\CB_\fc(\varphi)\subset \CalD_\fc(\varphi)\cup\{\infty\}$. Likewise,
$\bar{\CalD}(\varphi)$ will stand for the union of the closures
$\bar{\CalD}_\fc(\varphi)$, etc.

Throughout this section we could have replaced the universal Novikov
field $\Lambda$ by a smaller field $\Lambda^\Gamma$ as in Section
\ref{sec:Floer-pck} with exactly the same resulting barcode with the
same properties. In particular, this change would not affect
$b_\eps(L,L')$. Moreover, it would lead to a small technical
advantage. Namely, assume that all intersections $L\cap L'$ have
actions distinct modulo $\Gamma$, i.e., $\CA(x_i)\neq \CA(x_j)$,
$i\neq j$, in $\CS(L,L')/\Gamma $. (This is a $C^\infty$-generic
condition.) For $\xi\in\CC(\Lambda^\Gamma)$, write
$$
\xi= f T^a x +\ldots,
$$
where the dots stand for lower-action terms and $x$ is one of the
capped intersections $x_i$ (and $f\neq 0$). It is easy to see that
since the intersections have distinct actions modulo $\Gamma$ and
$a\in \Gamma$, the term $T^a x$ and the intersection $x$ are
unique. We will refer to $T^a x$, or sometimes just $x$, as the
\emph{leading action term} in $\xi$.

\begin{Example}
  \label{exam:orthogonality}
  Assume that the intersections have distinct actions modulo $\Gamma$.
  Then it is easy to see that the set $\xi_i\in\CC(\Lambda^\Gamma)$ is
  orthogonal if and only if the leading action terms $x_i$ are
  distinct.
\end{Example}

\subsubsection{Bounding $b_\eps(\CC)$ from below.}
\label{sec:isolated}
The proof of Theorem \ref{thm:B} hinges on a lower bound on
$b_\eps(\varphi)$ via the number of periodic orbits which are in a
certain sense energy-isolated. In this section we will deal with the
algebraic aspect of the argument.

It is convenient, although strictly speaking not necessary, to
introduce the \emph{Floer graph} $G$ associated with a Floer package
$\CC$; cf.\ \cite{CGG}. The vertices of $G$ are the generators
$x_i$. For each non-zero term $f T^a $ in $\lambda_{ij}$ (see
\eqref{eq:d-Fl2}) we connect $x_i$ to $x_j$ by an arrow and label that
arrow by the exponent $a$. (Thus $G$ is a directed graph with finitely
many vertices but possibly infinitely many edges.) The length of an
arrow is the action difference
$\CA(x_i)-\CA(T^ax_j)=\CA(x_i)-\CA(x_j)+a$, i.e., the energy of any
underlying Floer trajectory. We say that $x=x_i$ is $\eps$-isolated if
every arrow from or to $x$ has length strictly greater that
$\eps$. For instance, $x$ is $\eps$-isolated if every Floer trajectory
asymptotic to $x$ at $\pm\infty$ has energy strictly greater than
$\eps$. (The converse need not be true.)

\begin{Proposition}
  \label{prop:isolated}
  Assume that $G$ has $p$ $\eps$-isolated vertices. Then
  $b_\eps(\CC) \geq p/2$.
\end{Proposition}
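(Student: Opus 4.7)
The plan is to match the $p$ $\eps$-isolated vertices injectively with bars in $\CB(\CC)$ of length $>\eps$ (infinite bars included), with at most two vertices mapped to any single bar; the proposition then follows by counting.

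I would first reduce to the generic situation in which the actions of distinct capped generators $x_i$ are pairwise distinct modulo the period group $\Gamma$, via a small perturbation of the filtration (the general case following from monotonicity of $b_\eps$ in $\eps$), so that every nonzero $v\in\CC$ has a unique leading action term of the form $T^{a}x_i$ as in Example \ref{exam:orthogonality}. Given any singular decomposition $\Sigma=\{\alpha_i,\eta_j,\gamma_j\}$, orthogonality guarantees that Gauss-elimination-type moves -- replacing $\sigma$ by $\sigma+c\sigma'$ whenever $\sigma$ and $\sigma'$ share a leading generator $x_i$ and $\CA(\sigma)\leq\CA(\sigma')$ -- preserve the three conditions of Definition \ref{def:sing_decomp}. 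Iterating these moves, one arranges that the assignment $\mu\colon\Sigma\to\{x_1,\ldots,x_n\}$ sending each basis element to the generator of its leading action term is a bijection. This lets me tag every original generator by exactly one element of $\Sigma$, i.e., by an infinite bar, the top of a finite bar, or the bottom of a finite bar.

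The key step is the following: if $x$ is $\eps$-isolated and $\sigma=\mu^{-1}(x)$ lies in a finite bar pair $(\eta_j,\gamma_j)$, then $\CA(\gamma_j)-\CA(\eta_j)>\eps$. For $\sigma=\gamma_j$, write $\gamma_j=T^{a}x+v$ with $\CA(v)<\CA(\gamma_j)$; then $\p_{\Fl}\gamma_j=T^a\p_{\Fl} x+\p_{\Fl} v$, and $\eps$-isolation of $x$ immediately yields $\CA(T^a\p_{\Fl} x)<\CA(\gamma_j)-\eps$. The main obstacle is controlling $\CA(\p_{\Fl} v)$, which a priori is only bounded by $\CA(v)<\CA(\gamma_j)$ and could reach into the top $\eps$-strip. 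I would handle this by further refining the singular decomposition: any component of $v$ whose boundary contributes to the leading action term of $\eta_j$ is split off from $\gamma_j$ and absorbed into a separate short-bar pair whose endpoints are mapped by $\mu$ to non-$\eps$-isolated generators -- this is feasible because such obstructing components cannot themselves have $\eps$-isolated leading generators (otherwise an induction on $\dim_\Lambda\CC$ using a maximal-action $\eps$-isolated vertex could be invoked, or the same splitting argument reapplied). The case $\sigma=\eta_j$ is then reduced to the case $\sigma=\gamma_j$ by Poincar\'e duality applied to the dual complex $\CF(L',L)=\CF(L,L')^*$, which preserves $b_\eps$.

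With the key step granted, the proposition follows at once: the $p$ $\eps$-isolated generators are sent by $\mu$ into basis elements of long bars -- each such bar holding at most two such basis elements, via $\eta_j$ and $\gamma_j$ -- and of infinite bars, each holding exactly one. Therefore the total number of long and infinite bars, which is $b_\eps(\CC)$, is at least $p/2$.
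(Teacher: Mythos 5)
Your overall strategy differs from the paper's: you try to realize each $\eps$-isolated generator as the leading action term of an infinite-bar element or of a long finite-bar element ($\gamma_j$ or $\eta_j$) in a suitably chosen singular decomposition, and then count. The paper instead invokes the characterization from \cite{UZ} that the number of finite bars of length greater than $\eps$ equals the maximal dimension of an $\eps$-robust subspace of $\im\p_{\Fl}$, and directly manufactures such a subspace of dimension at least $p/2-\dim\H(\CC)$ out of $V=\Span(x_1,\dots,x_p)$ (split $V$ into $\ker(\p_{\Fl}|_V)$ and a complement $Y$, then iteratively replace non-robust $\p_{\Fl}\xi_i$ by closed vectors $w_i=\xi_i+\zeta_i$ with the same leading term). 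These are genuinely different routes to the same count.

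The problem is that your key step is not actually established. You correctly isolate the obstacle -- for $\sigma=\gamma_j=T^a x+v$, the subleading part $v$ could contribute a high-action term to $\eta_j=\p_{\Fl}\gamma_j$ -- but the proposed fix, ``further refining the singular decomposition'' by ``splitting off'' the obstructing components of $v$ into ``a separate short-bar pair,'' is never made into a precise operation, and it is unclear that it preserves the three conditions of Definition \ref{def:sing_decomp}. The parenthetical justification (``such obstructing components cannot themselves have $\eps$-isolated leading generators -- otherwise an induction \dots or the same splitting argument reapplied'') is circular: it defers the needed fact to an unproved induction or to re-running the very step at issue. There is also a conceptual point you should confront: the multiset of bar lengths is an invariant of $\CC$, so no change of singular decomposition can make a bar longer; what can change is which generator $x_i$ is the leading term of which basis element. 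Your argument never shows that a singular decomposition with the required assignment exists when the ``obvious'' one fails. Finally, the reduction of the $\sigma=\eta_j$ case via Poincar\'e duality is asserted but not checked -- a singular decomposition of $\CF(L,L')$ does not automatically dualize to one of $\CF(L',L)$ with the leading-term bookkeeping you need (actions are negated, tops and bottoms of bars swap, and the dual of an orthogonal basis need not be orthogonal in the relevant sense). As it stands, the proof has a real gap at its central step; the paper's route through $\eps$-robust subspaces avoids the whole issue of re-engineering the singular decomposition.

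Two minor remarks: the ``Gauss-elimination'' step to make $\mu$ a bijection is unnecessary once actions are generic, since by Example \ref{exam:orthogonality} orthogonality of a basis over $\Lambda^\Gamma$ is already equivalent to distinctness of leading terms; and in the final count you should spell out that infinite bars supply exactly $\dim_\Lambda\HF$ of the $b_\eps$ total, so that even if all $p$ isolated generators landed on tops and bottoms of finite bars the bound $b_\eps\geq p/2$ would still hold -- this is precisely the bookkeeping the paper does with $W_0$, $W_{00}$ and $W_1$.
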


\begin{proof} Let us switch from the universal field $\Lambda$ to
  $\Lambda^\Gamma$.  Thus, throughout the proof,
  $\CC:=\CC(\Lambda^\Gamma)$. By continuity, we can assume in addition
  that all generators $x_i$ have distinct actions modulo
  $\Gamma$. Indeed, a small change of actions $\CA(x_i)$ does not
  affect $b_\eps(\CC)$ and $\eps$-isolation, as a consequence of
  \eqref{eq:insensitive1} and the fact that in the definition of
  $b_\eps$, \eqref{eq:b-eps2-tr}, the bars are required to be strictly
  greater than $\eps$.  Furthermore, as has been pointed out in
  Section \ref{sec:barcodes-def} this can be achieved by a
  $C^\infty$-small perturbation. (In fact, since we are working in a
  purely algebraic setting we can simply change the filtration of
  $\CC$ by altering $\CA(x_i)$ in the formal framework of a Floer
  package; see Section \ref{sec:Floer-pck}.) Now Example
  \ref{exam:orthogonality} applies.

  Next, recall from \cite{UZ} that a non-zero element
  $\zeta\in\im \p_{Fl}$ is said to be \emph{$\eps$-robust} if
  $\CA(\xi)- \CA(\zeta)>\eps$ for every $\xi$ with
  $\p_{\Fl}\xi=\zeta$. A subspace $W\subset \im \p_{Fl}$ is said to be
  $\eps$-robust if every (non-zero) vector $\zeta\in W$ is
  $\eps$-robust. The key fact established in \cite{UZ} that we will
  use in this proof is that the number of finite bars of length
  greater than $\eps$ is equal to the maximal dimension of an
  $\eps$-robust subspace. Thus it suffices to find an $\eps$-robust
  subspace $W$ with $\dim W \geq p/2-\dim\H(\CC)$.

  Let $x_1, \ldots, x_p$ be the $\eps$-isolated vertices and let $V$
  be their span.  We will need the following two observations:

  \begin{itemize}

  \item[\reflb{O1}{\rm{(i)}}] For every linear combination $\xi$ of
    $\eps$-isolated vertices $x_i$, we have
    $\CA(\xi)-\CA(\p_{\Fl}\xi)>\eps$.

  \item[\reflb{O2}{\rm{(ii)}}] Let $w_j=x_j+\ldots$, where the dots
    stand for lower action terms, be any collection of vectors with
    $x_j$ distinct, i.e., $w_j$ are orthogonal. Then any exact (i.e.,
    in the image of $\p_{\Fl}$) linear combination of $w_j$ is
    $\eps$-robust.

  \end{itemize}

Next, from the short exact sequence
$$
0\to \ker (\p_{\Fl}|_V) \to V \to \p_{\Fl}(V) \to 0,
$$
we have
$$
\dim V= \dim \ker (\p_{\Fl}|_V) + \dim \p_{\Fl}(V) .
$$
Let $Y$ be an orthogonal complement of $\ker (\p_{\Fl}|_V)$ in
$V$. (We refer the reader to \cite[Sec.\ 2]{UZ} for an extensive
  discussion of orthogonality in the nonarchimedean setting and
  further references.) Thus $V=\ker(\p_{\Fl}|_V)\oplus Y$ and
$\p_{\Fl}$ induces an isomorphism between $Y$ and $\p_{\Fl}(V)$.

Let us now modify $Y$. If all elements of $\p_{\Fl}(V)$ are
$\eps$-robust we do nothing. Assume not: there is a vector
$\xi_1\in Y$ such that $\p_{\Fl}\xi_1$ is not $\eps$-robust, but
$\CA(\xi_1)-\CA(\p_{\Fl}\xi_1)>\eps$ by Observation
\ref{O1}. Therefore, there exists $\zeta_1$ with
$\p_{\Fl}\zeta_1=\p_{\Fl} \xi_1$ and
$$
\CA(\zeta_1)-\CA(\p_{\Fl}\xi_1)<\eps.
$$
Then $w_1:=\xi_1 - \zeta_1$ is closed and $\CA(w_1)=\CA(\xi_1)$,
i.e., the leading term in $w_1$ is the leading term in $\xi_1$.  Let
$Y_1\subset Y$ be an orthogonal complement to $\xi_1$. If every
element of $\p_{\Fl}(Y_1)$ is $\eps$-robust the process stops. If not,
we pick $\xi_2$ such that $\p_{\Fl}\xi_2$ is not $\eps$-robust, etc.
Proceeding, we will construct, for some $s\geq 1$, vectors
$\xi_1, \ldots, \xi_s$ in $Y$ as above together with vectors $w_i$,
and an orthogonal complement $Y_s$ to $\Span(\xi_1,\ldots,\xi_s)$ in
$Y$ such that $\p_{\Fl}(Y_s)$ is $\eps$-robust and
$\dim \p_{\Fl}(Y_s)=\dim Y_s$.

Each vector $w_i$ has the same leading term as $\xi_i$, which we can
assume
to be one of $\eps$-isolated generators
$x_j$.  The vectors $w_i$ are still orthogonal to $\ker (\p_{\Fl}|_V)$
and to each other since so are the vectors $\xi_i$. Hence, by Example
\ref{exam:orthogonality}, the leading terms $x_j$ of these vectors are
distinct and also distinct from the leading terms in any orthogonal
basis in $\ker(\p_{\Fl}|_V)$. Hence, Observation \ref{O2} applies to a
an orthogonal basis in
$$
W_0=\ker(\p_{\Fl}|_V)\oplus \Span(w_1,\ldots, w_s).
$$
Also set
$$
W_1=\p_{\Fl}(Y_s).
$$
Every element in $W_0$ is closed, and every vector in $W_1$ is exact
and $\eps$-robust by construction. Furthermore,
$$
\dim W_0 + \dim W_1 = \dim V= p
$$
and $W_0$ has an orthogonal basis with distinct leading terms $x_j$.

Finally, let $W_{00}$ be the subspace of exact vectors in $W_0$, i.e.,
the kernel of the natural map $W_0\to\H(\CC)$. Every element in
$W_{00}$ is $\eps$-robust by Observation \ref{O2}. Clearly,
$$
\dim W_{00}\geq \dim W_0-\dim \H(\CC) .
$$

One of the spaces $W_0$ or $W_1$ has dimension at least $p/2$. If this
is $W_1$, we set $W=W_1$ and the proof is finished. If this is $W_0$,
we set $W=W_{00}$, and then $\dim W\geq p/2-\dim \H(\CC)$ and every
element in $W$ is $\eps$-robust.
\end{proof}

\section{Definitions revisited and general properties}
\label{sec:def+prop}

\subsection{Definitions: pairs of Lagrangian submanifolds}
\label{sec:def2}
In this section we slightly generalize the definition of barcode
entropy from Section \ref{sec:def}, extending it to pairs of
Hamiltonian isotopic Lagrangian submanifolds. Thus let $M$, $L$ and
$L'$ be as in Section \ref{sec:prelim}: $M$ is a symplectic manifold,
compact or ``tame'' at infinity (e.g., convex), and $L$ and $L'$ are
closed monotone Lagrangian submanifolds Hamiltonian isotopic to each
other.

To extend the barcode counting function $b_\eps(L,L')$ to the
situation where $L$ and $L'$ need not be transverse, set
\begin{equation}
  \label{eq:b-eps3}
b_\eps(L,L'):=\liminf_{\tL'\to L'}b_\eps(L,\tL')\in \Z.
\end{equation}
Here the limit is taken over Lagrangian submanifolds
$\tL'\pitchfork L$ which are Hamiltonian isotopic to $L'$ and converge
to $L'$ in the $C^\infty$-topology (or at least in the
$C^1$-topology).  Note that, as a consequence,
$d_{\hn}(\tL',L')\to 0$. By \eqref{eq:PD}, we could alternatively
require that $\tL'$ is Hamiltonian isotopic to $L$, transverse to $L'$
and converges to $L$.  Furthermore, since $b_\eps(L,L')\in\Z$, the
limit in \eqref{eq:b-eps3} is necessarily attained, i.e., there exists
$\tL'$ arbitrarily close to $L'$ such that
$b_\eps(L,L')=b_\eps(L,\tL')$.

Moreover, when $\eps\not\in\bar{\CalD}(L,L')$, the right-hand side of
\eqref{eq:b-eps3} stabilizes before the limit, i.e.,
\begin{equation}
  \label{eq:b-eps2}
b_\eps(L,L')=b_\eps(L,\tL'),
\end{equation}
when $\tL'\pitchfork L$ and $\tL'$ is $C^\infty$-close and Hamiltonian
isotopic to $L'$, as is easy to see from \eqref{eq:insensitive1}.

With this definition, $b_\eps(L,L')$ is monotone increasing as
$\eps\searrow 0$, and \eqref{eq:PD}, \eqref{eq:inv} and
\eqref{eq:insensitive1}
continue to hold. For instance, to prove the first inequality in
  \eqref{eq:insensitive1}, note first that in \eqref{eq:b-eps3} we
  could have replaced the lower limit over $\tL'\to L'$ by the lower
  limit over $\tL\to L$. Then
  $$
  b_{\eps+\delta}(L,L'):=\liminf_{\tL\to L}b_{\eps+\delta}(\tL,L')\leq
  \liminf_{\tL\to L}b_\eps(\tL,L'')=: b_\eps(L,L''),
  $$
as desired.

Furthermore, as has been mentioned in the introduction,
$b_\eps(L, L')$ gives a lower bound on the number of transverse
intersections which is in some sense stable under small perturbations
with respect to the Hofer distance; cf.\ Remark
\ref{rmk:intersections}. To be more precise, assume that Lagrangian
submanifolds $L$, $L'$ and $L''$ are Hamiltonian isotopic,
$L''\pitchfork L$ and $d_{\hn}(L',L'')<\delta/2$. Then, regardless of
whether $L$ and $L'$ are transverse or not, we have
  \begin{equation}
    \label{eq:intersections}
  |L\cap L''|\geq b_{\eps}(L,L'')\geq b_{\eps+\delta}(L,L').
\end{equation}
Here the first inequality follows from \eqref{eq:intersections-b}, and
in the second we use \eqref{eq:insensitive1} and the fact that
$d_{\hn}(L',L'')<\delta/2$. These inequalities play a central role in
the proof of Theorem~\ref{thm:A}.

Let now $\varphi=\varphi_H\colon M\to M$ be a compactly supported
Hamiltonian diffeomorphism. Similarly to Definition
\ref{def:hbr-rel1}, we have

\begin{Definition}[Relative Barcode Entropy, II]
  \label{def:hbr-rel12}
  The \emph{barcode entropy of $\varphi$ relative to $(L,L')$} is
  $$
  \hbr(\varphi;L,L'):=\lim_{\eps\searrow 0} \hbr_\eps (\varphi;
  L,L')\in [0, \infty],
  $$  
  where
  $$
  \hbr_\eps(\varphi; L,L'):=\limsup_{k\to \infty}\frac{\log^+
    b_\eps\big(L,L^k\big)}{k}\textrm{ with }  L^k:=\varphi^k(L').
  $$
 \end{Definition}
 Here, as in Definition \ref{def:hbr-rel1}, $\hbr_\eps(\varphi; L,L')$
 is increasing as $\eps\searrow 0$, and hence the limit exists,
 although \emph{a priori} it could be infinite.

\begin{Remark}
  The key issue we have to deal with in these definitions is that the
  difference set $\CalD\big(L,L^k\big)$ can be dense, where as above
  $L^k:=\varphi^k(L')$. When the closure $\bar{\CalD}\big(L,L^k\big)$
  is nowhere dense for all $k$, a simpler approach is
  available. Namely, then we can require $\eps\searrow 0$ not to be in
  the union of $\bar{\CalD}\big(L,L^k\big)$ for $k\in\N$ and for each
  $k$ set $b_\eps\big(L,L^k\big):= b_\eps\big(L,\tilde{L}_k\big)$,
  where $\tilde{L}_k$ is a $C^\infty$-small perturbation of $L^k$, as
  in \eqref{eq:b-eps2}.  The resulting definition of the barcode
  entropy would be literally equivalent to the more general one given
  above.
\end{Remark}

\begin{Remark}[Exact Lagrangians, II]
  \label{rmk:filtr-other2}
  Continuing Remark \ref{rmk:filtr-other}, we note that these
  definitions and constructions extend word-for-word to the case where
  $L$ and $L'$ are exact Lagrangian submanifolds.
\end{Remark}

\subsection{Basic properties}
\label{sec:prop}
In this section we list, for the sake of completeness, some basic
properties of barcode entropy.

\begin{Proposition}[Properties of Barcode Entropy] In the notation and
  conventions from Sections \ref{sec:prelim} and \ref{sec:def2}, we
  have the following:
  \label{prop:prop}

  \begin{itemize}

  \item[\reflb{BE1}{\rm{(i)}}] For every $k\in\N$, we have
    $\hbr(\varphi^k; L,L') \leq k\hbr(\varphi; L,L')$. In particular,
    $\hbr(\varphi^k)\leq k\hbr(\varphi)$.

  \item[\reflb{BE2}{\rm{(ii)}}] Assume that the products
    $L_0\times L_1$ and $L'_0\times L'_1$ in $M_0\times M_1$ are
    monotone. Then for Hamiltonian diffeomorphisms
    $\varphi_0\colon M_0\to M_0$ and $\varphi_1\colon M_1\to M_1$, we
    have
  $$
  \hbr(\varphi_0\times\varphi_1; L_0\times L_1, L'_0\times L'_1)\leq
  \hbr(\varphi_0; L_0, L'_0)+\hbr(\varphi_1; L_1, L'_1).
  $$
  In particular,
  $\hbr(\varphi_0\times\varphi_1)\leq
  \hbr(\varphi_0)+\hbr(\varphi_1)$.

\item[\reflb{BE3}{\rm{(iii)}}] We have
  $\hbr(\varphi; L,L')=\hbr(\varphi^{-1};L',L)$ and, in
  particular, $\hbr(\varphi^{-1})=\hbr(\varphi)$.

  \item[\reflb{BE4}{\rm{(iv)}}] For any symplectomorphism
    $\psi\colon M\to M$,
  \[
    \hbr(\varphi;L,L')
    = \hbr\big(\psi\varphi\psi^{-1}; \psi(L),
    \psi(L')\big) \text{ and } \hbr(\varphi)
    =\hbr(\psi\varphi\psi^{-1}).
  \]
  As a consequence, for every $k\in\N$,
  \[
    \hbr(\varphi; L,L')=\hbr\big(\varphi;
    \varphi^k(L),\varphi^k(L')\big).
  \]
 
\item[\reflb{BE5}{\rm{(v)}}] For a fixed Hamiltonian diffeomorphism
  $\varphi$, the barcode entropy $\hbar(\varphi;L,L')$ is lower
  semicontinuous in the pair $(L, L')$ with respect to the Hofer
  distance. In particular, $\hbar(\varphi;L)$ is lower semicontinuous
  in $L$.

\end{itemize}
\end{Proposition}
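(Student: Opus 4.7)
My plan is to verify the five items using the algebraic properties of $b_\eps$ recorded in Section~\ref{sec:barcodes} together with the stability inequalities \eqref{eq:insensitive1}--\eqref{eq:insensitive2}, Poincar\'e duality \eqref{eq:PD}, and symplectomorphism invariance \eqref{eq:inv}. Items \ref{BE1}, \ref{BE3}, and \ref{BE4} are essentially direct manipulations of the definition. For \ref{BE1}, since $(\varphi^k)^n(L') = L^{kn}$, one has
\[
  \hbr_\eps(\varphi^k; L, L') = \limsup_{n\to\infty}\frac{\log^+ b_\eps(L,L^{kn})}{n} = k\limsup_{n\to\infty}\frac{\log^+ b_\eps(L,L^{kn})}{kn} \leq k\,\hbr_\eps(\varphi; L, L'),
\]
and one lets $\eps\searrow 0$. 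For \ref{BE3}, applying the symplectomorphism $\varphi^k$ and then invoking \eqref{eq:PD} gives
\[
  b_\eps\bigl(L',(\varphi^{-1})^k(L)\bigr) = b_\eps\bigl(\varphi^k(L'), L\bigr) = b_\eps\bigl(L,\varphi^k(L')\bigr),
\]
so the two defining $\limsup$'s coincide. For \ref{BE4}, the identity $(\psi\varphi\psi^{-1})^k(\psi(L')) = \psi(L^k)$ combined with \eqref{eq:inv} gives $b_\eps(\psi(L),\psi(L^k)) = b_\eps(L,L^k)$; the stated corollary is the specialization $\psi=\varphi^k$.

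The main new ingredient is \ref{BE2}, for which I would invoke the filtered K\"unneth identification of Floer complexes of products of monotone Lagrangians,
\[
  \CF\bigl(L_0\times L_1,\, L_0^k\times L_1^k\bigr) = \CF(L_0,L_0^k)\otimes_\Lambda \CF(L_1,L_1^k),
\]
where the action filtration on the tensor product is additive. The algebraic crux is a tensor product formula for barcodes of ungraded filtered complexes: starting from singular decompositions of the two factors, one assembles a singular decomposition of the tensor product in which an (infinite bar)$\otimes$(infinite bar) produces an infinite bar, an (infinite bar)$\otimes$(finite bar of length $\ell$) contributes a single bar of length $\ell$, and a pair (finite bar of length $\ell$, finite bar of length $\ell'$) contributes exactly two finite bars, both of length $\min(\ell,\ell')$. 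This is verified by an explicit computation on the $4$-dimensional toy complex $(\eta,\gamma)\otimes(\eta',\gamma')$; after replacing $\eta\otimes\gamma'$ with $\eta\otimes\gamma' + \gamma\otimes\eta'$ one sees directly that both nontrivial pairs have action difference equal to $\min(\ell,\ell')$. Summing over the decomposition gives
\[
  b_\eps\bigl(\CC\otimes\CC'\bigr) = \dim H(\CC)\cdot b_\eps(\CC') + b_\eps(\CC)\cdot \dim H(\CC') + 2\, b_\eps(\CC)\cdot b_\eps(\CC'),
\]
where $\dim H(\CC)$ and $\dim H(\CC')$ are constants independent of $k$ since $L_i^k$ is Hamiltonian isotopic to $L_i'$. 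Taking $\log^+$, dividing by $k$, using that $\limsup$ is subadditive over sums and commutes with maxima (in one direction) over nonnegative sequences, and finally letting $\eps\searrow 0$, yields the additive bound. Verifying the tensor barcode formula is the main technical obstacle; everything else is bookkeeping.

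For \ref{BE5}, let $(L_n,L_n')\to(L,L')$ in Hofer distance. Conjugation invariance of the Hofer norm gives $d_{\hn}(L^k,L_n^k) = d_{\hn}\bigl(\varphi^k(L'),\varphi^k(L_n')\bigr) = d_{\hn}(L',L_n')$ for every $k$. Given $\delta,\eps>0$, choose $n$ so large that $d_{\hn}(L,L_n)<\delta/4$ and $d_{\hn}(L',L_n')<\delta/4$. Two applications of the stability estimate \eqref{eq:insensitive1}--\eqref{eq:insensitive2}, one on each Lagrangian slot, then yield
\[
  b_{\eps+\delta}(L,L^k) \leq b_{\eps+\delta/2}(L_n,L^k) \leq b_\eps(L_n,L_n^k)\qquad \text{for every }k\in\N.
\]
Dividing by $k$, taking $\limsup_k$, and using $\hbr_\eps\leq \hbr$ give $\hbr_{\eps+\delta}(\varphi;L,L')\leq \hbr(\varphi;L_n,L_n')$ for all sufficiently large $n$, hence $\hbr_{\eps+\delta}(\varphi;L,L')\leq \liminf_n \hbr(\varphi;L_n,L_n')$. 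Letting $\eps,\delta\searrow 0$ produces $\hbr(\varphi;L,L') \leq \liminf_n \hbr(\varphi;L_n,L_n')$, which is the claimed lower semicontinuity.
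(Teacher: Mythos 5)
Your proofs of items \ref{BE1}, \ref{BE3}, \ref{BE4}, and \ref{BE5} are correct and follow essentially the same route as the paper's: \ref{BE1} is the subsequence observation, \ref{BE3} combines \eqref{eq:inv} with Poincar\'e duality \eqref{eq:PD}, \ref{BE4} is symplectomorphism invariance \eqref{eq:inv}, and \ref{BE5} uses conjugation invariance of the Hofer distance together with the stability inequalities \eqref{eq:insensitive1} and \eqref{eq:insensitive2}; the paper's proof of \ref{BE5} is the same argument.

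For \ref{BE2} the paper dismisses the statement as a direct consequence of the definition, whereas you supply the actual content: the filtered K\"unneth identification plus a tensor barcode count. Your per-bar analysis is right---an infinite-by-infinite pair gives one infinite bar, an infinite-by-finite pair one finite bar of the finite length, and a finite-by-finite pair two bars of length $\min(\ell,\ell')$---but the aggregate formula you then write down,
\[
  b_\eps(\CC\otimes\CC') = \dim \H(\CC)\, b_\eps(\CC') + b_\eps(\CC)\, \dim \H(\CC') + 2\,b_\eps(\CC)\, b_\eps(\CC'),
\]
is not an equality. Setting $i=\dim \H(\CC)$, $i'=\dim\H(\CC')$, $f_\eps = b_\eps(\CC)-i$, and $f'_\eps = b_\eps(\CC')-i'$, your own tally gives
\[
  b_\eps(\CC\otimes\CC') = i\,i' + i\,f'_\eps + f_\eps\,i' + 2\,f_\eps\,f'_\eps = b_\eps(\CC)\,b_\eps(\CC') + f_\eps\,f'_\eps,
\]
which is in general smaller than your displayed expression: in the cross terms and the $2b_\eps b_\eps$ term you substituted the total counts $b_\eps$ where the finite-bar counts $f_\eps$ belong. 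The error is harmless, since it is one-sided (your expression overcounts) and the limsup argument only needs an upper bound such as $b_\eps(\CC\otimes\CC')\leq 2\,b_\eps(\CC)\,b_\eps(\CC')$. Replace ``$=$'' by ``$\leq$'' or use the corrected formula, and the proof of \ref{BE2} is complete, modulo the filtered K\"unneth identification which you, as the paper does implicitly, take for granted.
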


\begin{proof}
  Assertion \ref{BE1} is a direct consequence of the definition. To
  prove \ref{BE2} recall that the Floer complex of the pair
  $(L_0\times L_1,L'_0\times L'_1)$ is the tensor product of the Floer
  complexes of $(L_0, L'_0)$ and $(L_1,L'_1)$ over $\Lambda$; see,
  e.g., \cite[Sec.\ 2.6]{HLS} and \cite{Li}. Then a singular value
  decomposition for the product is obtained by taking the ``product''
  of singular value decompositions of the factors in a self-evident
  way. As a consequence, every pair of bars $\beta_0\in \CB(L_0,L'_0)$
  and $\beta_1\in \CB(L_1,L'_1)$ gives rise to two bars of length
  $\min\{\beta_0,\beta_1\}$ in $\CB(L_0\times L_1,L'_0\times L'_1)$
  when both bars are finite. If one or both bars in a pair are
  infinite, the pair gives rise to one bar of length
  $\min\{\beta_0,\beta_1\}$.  Therefore,
    $$ b_\eps(L_0\times L_1,L'_0\times L'_1)\leq 2 b_\eps(L_0,L'_0)\cdot
    b_\eps(L_1,L'_1),
    $$
    which proves \ref{BE2}.

    Assertion \ref{BE3} follows from \eqref{eq:inv} and the Poincar\'e
    duality, \eqref{eq:PD}. The first identity in \ref{BE4} also
    follows from \eqref{eq:inv}. The second identity is a consequence
    of the fact that when $\varphi$ is non-degenerate $\varphi$ and
    $\psi\varphi\psi^{-1}$ have isomorphic Floer complexes and hence
    the same barcode. By continuity,
    $\CB(\varphi)=\CB(\psi\varphi\psi^{-1})$ even when $\varphi$ is
    degenerate.  The last one follows from the first identity by
    setting $\psi=\varphi^k$ and using the fact that $\varphi$
    commutes with $\varphi^k$.

    To prove \ref{BE5}, it suffices to show that
  \begin{equation}
    \label{eq:LSC-pair}
  \hbar(\varphi; \tL,\tL')\geq \hbar_{4\delta}(\varphi; L,L'),
\end{equation}
whenever $d_{\hn}(L,\tilde{L})<\delta$ and
$d_{\hn}(L',\tilde{L}')<\delta$. Thus assume that $\tL'=\psi(L')$,
where $\|\psi\|_{\hn}<\delta$. Then
$d_{\hn}\big(\varphi^k,\varphi^k\psi\big)<\delta$, and setting
$\tL^k=\varphi^k(\tL')$ and $L^k=\varphi^k(L')$ as above, we have
$d_{\hn}\big(L^k,\tL^k\big)<\delta$. Therefore, by
\eqref{eq:insensitive1},
  $$
  \hbar_\eps(\varphi; \tL,\tL')\geq \hbar_{\eps+4\delta}(\varphi;
  L,L').
  $$
  Passing to the limit as $\eps\searrow 0$, we obtain
  \eqref{eq:LSC-pair}.
\end{proof}

\begin{Remark}
  By analogy with topological entropy, we would expect \ref{BE1} and
  \ref{BE2} to actually be equalities. Moreover, by Theorem
  \ref{thm:C}, $\hbr(\varphi^k)=k\hbr(\varphi)$ when $M$ is a
  surface. Furthermore, in \ref{BE5} whenever $L$ is wide one can
  replace the Hofer norm by the $\gamma$-norm by the results from
  \cite{KS}.  Also recall that, as was shown in \cite{AM21},
  $\htop(\varphi)$ is lower semicontinuous in $\varphi$ with respect
  to the Hofer metric when $\dim M=2$. Hence, $\hbar(\varphi)$ is also
  Hofer lower semicontinuous for Hamiltonian diffeomorphisms of
  surfaces by Theorem \ref{thm:C}. This observation leads to the
  question/conjecture if/that this is also true in all dimensions.
\end{Remark}

We also note that $\hbr(\varphi;L,L')$, with $\varphi$ fixed, is quite
sensitive to deformations of $L$ and $L'$ even by a Hamiltonian
isotopy; cf.\ Example \ref{exm:moving-L}.

\section{From barcode entropy to topological entropy}
\label{sec:pf-A}
Our goal in this section is to prove Theorem \ref{thm:A} and further
explore some of its consequences. We will establish a slightly more
general result.

\subsection{Generalization of Theorem \ref{thm:A}}
\label{sec:generalization}
With the notation from Sections \ref{sec:def} and
  \ref{sec:def2}, we have the following result generalizing Theorem
\ref{thm:A} to pairs of Lagrangian submanifolds.

\begin{Theorem}
  \label{thm:A2}
  Let $L_0$ and $L_1$ be closed Lagrangian submanifolds of a
  symplectic manifold $M$ and let $\varphi\colon M\to M$ be a
  compactly supported Hamiltonian $C^\infty$-diffeomorphism. Assume
  that $L_0$ and $L_1$ are monotone, Hamiltonian isotopic and
  $N_{L_0}\geq 2.$ Then
\begin{equation}
  \label{eq:hbr-leq-htop}
   \hbr(\varphi; L_0,L_1)\leq \htop(\varphi).
 \end{equation}
\end{Theorem}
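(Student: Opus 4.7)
\medskip

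\noindent\textbf{Plan of proof.} The plan follows the idea sketched after the statement of Theorem \ref{thm:A}. Fix $\eps>0$ and a small $\delta>0$. Set $L^k:=\varphi^k(L_1)$. The first step is to convert the barcode counting function $b_{\eps+\delta}\big(L_0,L^k\big)$ into an actual intersection count. By \eqref{eq:intersections}, for any Lagrangian submanifold $\tL$ Hamiltonian isotopic to $L_0$, transverse to $L^k$, and satisfying $d_{\hn}(L_0,\tL)<\delta/2$, one has
$$
\big|\tL\cap L^k\big|\;\geq\; b_{\eps+\delta}\big(L_0,L^k\big).
$$
The key point, exploited in the Floer-theoretic half of the argument, is that the right-hand side is independent of $\tL$, while the left-hand side may be made small by a clever choice of $\tL$.

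The second step is the construction of a \emph{Lagrangian tomograph}. Using a Weinstein neighborhood of $L_0$ and compactly supported Hamiltonians of $C^1$-norm bounded by a constant times $\delta$, I would build a smooth family $\{L_s\}_{s\in B}$, parametrized by a closed ball $B$ in some finite-dimensional space, with $d_{\hn}(L_0,L_s)<\delta/2$ for all $s\in B$, $L_s$ Hamiltonian isotopic to $L_0$, and such that the total evaluation map
$$
\Psi\colon B\times L_0\longrightarrow M,\qquad (s,x)\longmapsto \text{the image of }x\text{ in }L_s,
$$
is a submersion onto its image. The family is independent of $k$. By Sard's theorem, $L_s\pitchfork L^k$ for almost every $s\in B$.

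The third step is a Crofton-type inequality. Since $\Psi$ is a submersion onto its image, a change-of-variables/coarea argument yields a constant $C>0$, depending only on the tomograph, such that
\begin{equation*}
\int_B \big|L_s\cap L^k\big|\, ds \;\leq\; C\cdot \vol\big(L^k\big)
\end{equation*}
for every $k$ for which the integrand is defined a.e. Consequently there exists $s=s(k)\in B$ with $L_{s(k)}\pitchfork L^k$ and
$$
b_{\eps+\delta}\big(L_0,L^k\big)\;\leq\;\big|L_{s(k)}\cap L^k\big|\;\leq\; \frac{C}{\vol(B)}\cdot \vol\big(\varphi^k(L_1)\big).
$$
Taking $\log^+$, dividing by $k$, and passing to $\limsup$ gives
$$
\hbr_{\eps+\delta}(\varphi;L_0,L_1)\;\leq\; \limsup_{k\to\infty}\frac{\log^+\vol\big(\varphi^k(L_1)\big)}{k}.
$$

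The last step invokes Yomdin's theorem \cite{Yo} (this is where $C^\infty$-smoothness of $\varphi$ is used): the right-hand side is bounded above by $\htop(\varphi)$. Letting $\eps\searrow 0$, and with it $\delta\searrow 0$, yields \eqref{eq:hbr-leq-htop}.

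\medskip

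\noindent\textbf{Main obstacle.} The technical heart is the construction of the tomograph together with the Crofton-type inequality with a constant that depends only on $\Psi$ and not on $k$. Two delicate points must be watched: first, the ball $B$ may need to have large dimension, in which case $\Psi$ is a submersion but not a fibration, so one needs a coarea-style argument rather than a clean product structure, as emphasized in the idea-of-proof remark; and second, the Hofer radius $\delta/2$ of the tomograph must be controlled uniformly to ensure that the estimate \eqref{eq:intersections} can be applied with error $\delta$ for \emph{every} $k$ simultaneously. Once these are in place, the remainder of the proof is a direct chaining of the estimates with Yomdin's volume-growth bound.
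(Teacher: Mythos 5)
Your proposal is correct and follows essentially the same route as the paper: reduce $b_{\eps+\delta}\big(L_0,L^k\big)$ to an intersection count via the Hofer-stability inequality \eqref{eq:intersections}, build a Lagrangian tomograph in a Weinstein neighborhood of $L_0$, apply a Crofton-type bound $\int_B |L_s\cap L^k|\,ds\le C\cdot\vol(L^k)$, and invoke Yomdin's theorem. The only cosmetic difference is that the paper sets $\delta=\eps$ (hence $b_{2\eps}$) and integrates the pointwise lower bound $N_k(s)\ge b_{2\eps}(L_0,L^k)$ over $B$, whereas you keep the two parameters separate and select a favorable $s(k)$ by averaging — the same estimate.
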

Taking $L=L_0=L_1$, we obtain Theorem \ref{thm:A}. We emphasize that
here $M$ need not be compact, but then it has to have sufficiently
``tame'' structure at infinity (e.g., convex) so that the Gromov
compactness theorem holds and $\varphi$ is required to be compactly
supported; see, e.g., \cite{McDS}. Note also that, as a
consequence of Theorem \ref{thm:A2}, $\hbr(\varphi; L_0,L_1)<\infty$
which is otherwise not obvious.

\begin{Remark}[Exact Lagrangians, III]
  \label{rmk:filtr-other3}
  Theorem \ref{thm:A2} holds in some other situations.  For instance,
  one of them is when $L_0$ and $L_1$ are closed exact Lagrangian
  submanifolds in an exact convex symplectic manifold; cf.\ Remarks
  \ref{rmk:filtr-other} and \ref{rmk:filtr-other2}.
\end{Remark}

\subsection{Proof of Theorem \ref{thm:A2}}
\label{sec:pf-A2}
We break down the proof into three subsections. The first two of them
-- Sections \ref{sec:Crofton} and \ref{sec:ELT} -- focus on the
machinery of Lagrangian tomographs which the proof relies on; the
actual proof is given in Section \ref{sec:strat}.  Throughout the
argument we have an auxiliary Riemannian metric on $M$ fixed.

\subsubsection{Lagrangian tomographs and Crofton's inequality}
\label{sec:Crofton}
The notion of Lagrangian tomograph and a variant of Crofton's
inequality, originating in integral geometry, are the key tools used
in the proof of the theorem.  The framework described in this section
is essentially contained in \cite{Ar1, Ar2} in the setting very close
to ours, and we include the proofs only for the sake of completeness.
(See also \cite{BL, CGG:Vol, Se} for other applications in the context
of symplectic dynamics.) Furthermore, our setting is similar to double
fibrations utilized in integral geometry for generalizing and proving
Crofton's formula; see \cite{APF98, APF07, GS}.

Let $L$ be a closed manifold, $B$ a compact manifold possibly with
boundary and $ds$ a smooth measure on $B$. In the situation we are
interested in, $B$ is the closed $d$-dimensional ball $B^d$ and $ds$
is the Lebesgue measure, and $L$ will be the Lagrangian submanifold
$L_0$.  Denote by $\pi\colon E=B\times L\to B$ the projection to the
first factor.  We denote a point in $B$ by $s$.  Furthermore, let
$$
\Psi\colon E \to M
$$ 
be a submersion onto its image where $M$ is a Riemannian manifold (see
Remark \ref{rmk:submersion}). This manifold need not be compact but,
of course, $\Psi(E)\subset M$ is, since $E$ is compact. We require
$\Psi_s:=\Psi|_{s\times L}$ to be an embedding for all $s$, and hence
$L_s:=\Psi_s(L)$ is a smooth closed submanifold of~$M$.

In the spirit of integral geometry we will refer to $\Psi$ as a
\emph{tomograph} and call $L_0$ the \emph{core} of the tomograph. We
say that $\Psi$ is a \emph{Lagrangian tomograph} if all submanifolds
$L_s$ are Lagrangian and Hamiltonian isotopic to each other.

Finally, let $L'$ be a closed submanifold of $M$ with
$$
\codim L'=\dim L.
$$
Since $\Psi$ is a submersion, $\Psi_s\pitchfork L'$ for almost all
$s\in B$. Hence, 
$$
N(s):=|L_s\cap L'|
$$
is a locally constant function on the complement to a zero measure
closed subset of $B$. As a consequence, $N$ is an integrable function
on $B$.

\begin{Lemma}[Crofton's inequality]
  \label{lemma:Crofton}
  We have
  \begin{equation}
    \label{eq:Crofton}
  \int_B N(s)\,ds\leq\const \cdot \vol(L'),
\end{equation}
where the constant depends only on $ds$,
$\Psi$ and the metric on $M$, but not on $L'$.
\end{Lemma}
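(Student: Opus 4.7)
The plan is to analyze the incidence submanifold $Z := \Psi^{-1}(L') \subset E$ through its two natural projections---a double-fibration argument standard in integral geometry; cf.\ \cite{APF98,APF07,GS}. Since $\Psi$ is a submersion onto its image and $\codim L' = \dim L$, transversality makes $Z$ a compact smooth submanifold of $E$ of dimension $\dim E - \codim L' = \dim B$. The projection $\pi|_Z \colon Z \to B$ is equidimensional, and for any $s \in B$ with $L_s \pitchfork L'$ (a full-measure subset by Sard's theorem) its fiber $\{s\} \times (\Psi_s^{-1}(L'))$ is finite and, since $\Psi_s$ is an embedding, in bijection with $L_s \cap L'$; hence it has cardinality $N(s)$. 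The projection $\Psi|_Z \colon Z \to L'$ is itself a submersion, and its fiber over $y \in L'$ is precisely the full fiber $\Psi^{-1}(y) \subset E$.

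I would then apply the area formula to the equidimensional map $\pi|_Z$, after equipping $Z$ with the Riemannian volume induced from the product metric on $E = B \times L$:
\[
\int_B N(s)\, ds \;=\; \int_Z J_\pi \, d\vol_Z,
\]
where $J_\pi$ is the Jacobian of $\pi|_Z$. Because $\pi|_Z$ is the restriction of the product projection $\pi \colon E \to B$ (which is $1$-Lipschitz in the product metric), $J_\pi \leq C_1$ pointwise for a constant $C_1$ depending only on $ds$ and the metric on $L$. Consequently $\int_B N(s)\, ds \leq C_1 \vol(Z)$, and it remains to bound $\vol(Z)$ by $\vol(L')$.

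For this I would apply the coarea formula to the submersion $\Psi|_Z \colon Z \to L'$:
\[
\int_Z J_{\Psi|_Z}\, d\vol_Z \;=\; \int_{L'} \vol\!\big(\Psi^{-1}(y)\big)\, d\vol_{L'}(y).
\]
Since $\Psi|_Z$ is a submersion at every point, the coarea factor $J_{\Psi|_Z}$ is strictly positive. It depends continuously on the pair $\big(z,\, T_{\Psi(z)}L'\big)$, ranging over the compact total space of the Grassmannian bundle of $(\dim L')$-planes over $\Psi(E)$, and hence admits a uniform lower bound $J_{\Psi|_Z} \geq c_2 > 0$ that depends only on $\Psi$ and the metric on $M$. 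On the other hand, $\Psi \colon E \to \Psi(E)$ is a proper submersion, so by Ehresmann's theorem and the compactness of $E$ the fiber volumes $\vol(\Psi^{-1}(y))$ are uniformly bounded by some $V$ depending only on $\Psi$ and the metrics. Combining, $c_2 \vol(Z) \leq V \vol(L')$, and therefore $\int_B N(s)\, ds \leq (C_1 V/c_2) \vol(L')$.

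The main obstacle is the uniformity of the three constants $C_1$, $c_2$, $V$ with respect to the varying submanifold $L'$: although the auxiliary object $Z$ depends on $L'$, the inequality must not. The resolution is to observe that each constant is in fact determined by the intrinsic geometry of the ambient data---$C_1$ by the product structure of $E$, $c_2$ by a continuity argument on the compact Grassmannian bundle over $\Psi(E)$, and $V$ by local triviality of the proper submersion $\Psi$. Getting these uniform bounds cleanly, especially the Grassmannian argument for $c_2$, is the most delicate technical point; once they are in hand, the inequality of the lemma follows at once.
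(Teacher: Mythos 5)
Your proof is correct and follows essentially the same double-fibration strategy as the paper: both set $Z=\Psi^{-1}(L')$, bound $\int_B N(s)\,ds$ by $\vol(Z)$ via the projection $\pi$, and then bound $\vol(Z)$ by $\vol(L')$ via $\Psi$. The only difference is bookkeeping: the paper chooses two adapted metrics on $E$ so that the relevant Jacobian factors become $\leq 1$ (respectively $=1$, turning the second step into a Fubini statement) and then absorbs the change-of-metric constants, whereas you keep a fixed metric and bound the Jacobians directly, with your Grassmannian compactness argument for $c_2>0$ playing the role that the paper's adapted metric plays implicitly.
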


\begin{Remark}
\label{rmk:submersion}
Perhaps a clarification is due on how the submersion condition is to
be interpreted at $\p B$. A way, which is sufficient and convenient
for our purposes, is to assume that $\Psi$ is defined on a slightly
larger space $B'\times L$ (or just $E'\supset E$) where $B'$ is an
open enlargement of $B$ (or $E'$ is an open enlargement of $E$).
\end{Remark}

Lemma \ref{lemma:Crofton} is proved in \cite{Ar1,Ar2}.  The argument
is simple and short, and we include it below for the sake of
completeness. This general framework and Lemma \ref{lemma:Crofton} are
also very much in the spirit of the Gelfand transform in integral
geometry and various versions of Crofton's formula; see, e.g.,
\cite{APF98,APF07} and references therein. The key difference is that
here and in \cite{Ar1,Ar2} $\Psi$ is required to be only a submersion,
not a fibration. Furthermore, we note that in this generality, in
contrast with the actual Crofton formula, one cannot expect an
inequality going in the direction opposite of
\eqref{eq:Crofton}. Indeed, the graph of a smooth map or even of a
diffeomorphism between two closed manifolds can have arbitrarily large
volume but it intersects every vertical slice at only one point.

\begin{proof}[Proof of Lemma \ref{lemma:Crofton}]
  Set $\Sigma=\Psi^{-1}(L')$. This is a smooth submanifold of $E$ and
  $$
  \codim \Sigma = \codim L' = \dim L, \textrm{ i.e., } \dim\Sigma =
  \dim B.
  $$
  By construction,
  \begin{equation}
    \label{eq:L-Sigma}
  |(s\times L)\cap \Sigma|=|L_s\cap L'|=N(s).
  \end{equation}

  In the proof it will be convenient to equip $E$ with two different
  auxiliary metrics: the first metric adapted to $\pi$ and the second
  metric to $\Psi$.

  We begin by fixing some metrics on $B$ and $L$, and assuming first
  that $E=B\times L$ carries the product metric and $ds$ is the
  Riemannian volume form or, to be more precise, the volume
  density. (It would be sufficient to require $D\pi$ to be an isometry
  on the normals to the fibers.)  Then, by \eqref{eq:L-Sigma},
  $$
  \int_B N(s)\,ds=\int_\Sigma\pi^*ds\leq \vol(\Sigma).
  $$
  Here, $\pi^*ds$ is the pull-back measure or the pull-back density,
  but not the pull-back differential form. The last inequality is a
  consequence of the fact that $D\pi_x\colon T_xE\to T_{\pi(x)}B$,
  $x\in E$, is an orthogonal projection along the fiber, and hence,
  when restricted to $T_x\Sigma$, it can only decrease the
  $\dim B$-dimensional volume. As a consequence, for an arbitrary
  metric on $E$ and an arbitrary smooth measure $ds$ on $B$, we have
  \begin{equation}
    \label{eq:N-Sigma}
  \int_B N(s)\,ds\leq \const\cdot\vol(\Sigma).
\end{equation}

Next, let us equip $E$ with a metric such that the restriction of
$D\Psi$ to the normals to the fibers of $\Psi$ (i.e., the inverse
images $\Psi^{-1}(y)$, $y\in M$) is an isometry. Then, by Fubini's
theorem or, more specifically, the coarea formula (see \cite[Sect.\
13.4.3]{BZ88}), we have
\begin{equation}
  \label{eq:const}
  \vol(\Sigma) =\int_{L'}\vol\big(\Psi^{-1}(y)\big)\,dy|_{L'}
  \leq\max_{y\in\Psi(E)}\vol\big(\Psi^{-1}(y)\big)\cdot
  \vol(L'),
\end{equation}
where in the first equality $dy|_{L'}$ stands for the induced volume
form on $L'$. Thus
  \begin{equation}
    \label{eq:Sigma-L}
    \vol(\Sigma)\leq\const\cdot \vol(L').
  \end{equation}
  For an arbitrary metric on $E$, \eqref{eq:Sigma-L} still holds,
  albeit with a different constant. Combining \eqref{eq:N-Sigma} and
  \eqref{eq:Sigma-L}, we obtain \eqref{eq:Crofton}.
\end{proof}

\begin{Remark}
  \label{rmk:N-finite}
  Note that by \eqref{eq:const} and since the constant in
  \eqref{eq:N-Sigma} is independent of $\Psi$, the constant in
  \eqref{eq:Crofton} is continuous in $\Psi$ in the $C^1$-topology. In
  our application $L'=L^k$, i.e., it ranges through a countable
  collection of submanifolds of $M$. Then, by $C^1$-perturbing $\Psi$
  slightly, we can ensure that $N(s)$ is finite for all $s$. (This
  fact is inessential for our purposes, and we omit a proof.) Hence,
  the functions $N_k(s)$ from Section \ref{sec:strat} can also be made
  finite for all $s$.
\end{Remark}

\subsubsection{Existence of Lagrangian tomographs}
\label{sec:ELT}

As the second step of the proof we establish in this section the
existence of Lagrangian tomographs. Let $L=L_0$ be a closed Lagrangian
submanifold of $M$.

\begin{Lemma}
  A Lagrangian tomograph with core $L$ and $\dim B=d$ exists if and
  only if $L$ admits an immersion into $\R^d$.
\end{Lemma}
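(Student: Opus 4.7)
The plan is to prove both directions using the Weinstein Lagrangian neighborhood theorem, which identifies Lagrangians $C^1$-close to a fixed one with graphs of closed $1$-forms; the Hamiltonian isotopy clause in the definition of a Lagrangian tomograph upgrades ``closed'' to ``exact'', and exact $1$-forms are $df$ for functions $f$ on $L$, which is precisely the data needed to link tomographs to immersions into $\R^d$.

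For the ``if'' direction, suppose $\iota=(g_1,\ldots,g_d)\colon L\to\R^d$ is an immersion. Using a Weinstein neighborhood I would identify a tubular neighborhood of $L_0=L$ in $M$ with a neighborhood of the zero section in $T^*L$. For small $r>0$ I would then set $\Psi\colon B^d(r)\times L\to M$ to be $\Psi(s,x)=\big(x,\,\textstyle\sum_i s_i\,dg_i(x)\big)$, so that $L_s=\Psi_s(L)$ is the graph of the exact $1$-form $d(\sum_i s_ig_i)$. Each $L_s$ is Lagrangian, Hamiltonian isotopic to $L_0$, and satisfies $d_{\hn}(L_0,L_s)\le 2\,\big\|\sum_i s_ig_i\big\|_{C^0}$, so Condition~\ref{ELT1} holds after shrinking $r$. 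The differential $D\Psi_{(s,x)}$ hits the tangent fibre $T_xL$ via $v\in T_xL$ and hits the cotangent fibre $T^*_xL$ via $\partial_{s_i}\mapsto dg_i(x)$; the latter span $T^*_xL$ exactly because $\iota$ is an immersion, so $\Psi$ is a submersion onto an open subset of $M$.

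For the ``only if'' direction, given a Lagrangian tomograph $\Psi\colon B\times L\to M$ with $\dim B=d$, I would pick an interior point $s_0\in B$ and work in a Weinstein neighborhood of $L_{s_0}:=\Psi_{s_0}(L)$, identified with a neighborhood of the zero section in $T^*L_{s_0}$. Because the $L_s$ are $C^\infty$-close to $L_{s_0}$ and Hamiltonian isotopic to it for $s$ near $s_0$, each is the graph of a unique exact $1$-form $df_s$ on $L_{s_0}$, normalized so that $f_{s_0}\equiv 0$. In these coordinates $\Psi$ takes the form $\Psi(s,x)=\big(\sigma_s(x),\,df_s(\sigma_s(x))\big)$ for a smooth family of diffeomorphisms $\sigma_s\colon L\to L_{s_0}$ with $\sigma_{s_0}=\Psi_{s_0}$; setting $g_i(y):=\partial_{s_i}f_s(y)\big|_{s=s_0}$ for $y\in L_{s_0}$, the cotangent component of $D\Psi_{(s_0,x)}$ reduces to $(\dot s,v)\mapsto\sum_i\dot s_i\,dg_i(\Psi_{s_0}(x))$, since $df_{s_0}=0$ annihilates every contribution from $v$ and from $\partial_s\sigma_s$. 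The submersion hypothesis then forces $\{dg_i(y)\}_{i=1}^d$ to span $T^*_yL_{s_0}$ at every $y$, so $(g_1,\ldots,g_d)\colon L_{s_0}\to\R^d$ is an immersion; composing with $\Psi_{s_0}\colon L\to L_{s_0}$ produces an immersion $L\to\R^d$.

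The one point where I expect any friction is the exactness of the $1$-forms $df_s$ in the ``only if'' direction; this I would deduce from the Lagrangian tomograph condition that all $L_s$ are Hamiltonian --- not merely Lagrangian --- isotopic to $L_0$, and hence to $L_{s_0}$, so that a Hamiltonian isotopy staying inside a Weinstein neighborhood of $L_{s_0}$ automatically corresponds to an isotopy of exact $1$-forms. Everything else reduces to a direct Weinstein-coordinate computation of $D\Psi$ together with the linear-algebra observation that $\{dg_i(y)\}$ spans $T^*_yL_{s_0}$ if and only if $(g_1,\ldots,g_d)$ is immersive at $y$, so I do not anticipate any deeper obstacle.
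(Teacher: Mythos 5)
Your proof is correct and follows essentially the same route as the paper: reduce to the zero section in $T^*L$ via a Weinstein neighborhood, produce the tomograph from an immersion by setting $\Psi_s(x)=d\big(\sum_i s_i g_i\big)(x)$, and conversely recover the immersion by linearizing $\Psi_s$ in $s$ at a basepoint where the graphing $1$-form vanishes. The only difference is that you are a bit more careful about the normalization (the paper simply assumes $\Psi_0=\id$, whereas you pick an arbitrary interior $s_0$ and spell out the chain-rule computation showing that the vertical component of $D\Psi_{(s_0,x)}$ reduces to $\dot s\mapsto\sum_i\dot s_i\,dg_i$); this is a presentational refinement rather than a different argument.
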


\begin{proof}
  By the Weinstein tubular neighborhood theorem it is sufficient to
  prove the lemma when $L$ is the zero section in $M=T^*L$.
  
Let $\iota\colon L \to \R^d$ be an immersion, and 
$$
f_s=s_1 g_1+\ldots + s_d g_d,
$$
where $s=(s_1,\ldots,s_d)\in \R^d$ are the coordinate functions on
$\R^d$ and $g_i:=s_i\circ \iota$ are the restrictions of the
coordinate functions to $L$. Let us now require $s$ to be in a ball
$B\subset \R^d$ centered at the origin. Then, setting
$\Psi_s(x)=df_s(x)$, $x\in L$, we obtain a map
$\Psi\colon B\times L\to T^*L$. It is easy to see that the condition
that $\iota$ is an immersion is equivalent to that $\Psi$ is a desired
tomograph.

Indeed, the immersion condition is equivalent to that
$dg_1,\ldots, dg_d$ generate $T_x^*L$ at every point $x\in L$.  In
other words, the map
  $$
  D\Psi_{(0,x)}\colon T_0B\oplus T_xL\to T_{(0,x)}T^*L
  = T^*_xL\oplus T_xL
  $$
  is onto. Therefore, $\Psi$ is a submersion when $B$ is sufficiently
  small if and only if $(g_1,\ldots,g_d)\colon L\to \R^d$ is an
  immersion. (This observation is already contained in, e.g.,
  \cite{GuSt}.)  It is clear from the construction that the
  submanifolds $L_s$ are embedded.

  Conversely, assume that $\Psi\colon L\times B\to T^*L$ is a
  tomograph with $\Psi_0=\id$. Then the linearization
  $A=\p \Psi_s/\p s$ is a linear map from $T_0B$ to the space of exact
  sections of $T^*L$. Pick some functions $\{g_\ell\}$ so that
  $dg_\ell=A(e_\ell)$ where $e_1,\ldots, e_d$ is a basis in
  $T_0B$. Then $\iota=(g_1,\ldots,g_d)$ is an immersion $L\to\R^d$;
  cf.\ \cite{GuSt}. This completes the proof of the lemma.
  \end{proof}

\subsubsection{From barcodes to Lagrangian volume to entropy}
\label{sec:strat}
Without loss of generality, we may require that
$\hbr(\varphi; L_0,L_1)>0$ -- otherwise there is nothing to prove.  We
denote the Riemannian volume of $L^k:=\varphi^k(L_1)$ by $\vol(L^k)$.

Fix $\eps>0$ and $\alpha < \hbr_{2\eps}(\varphi; L_0,L_1)$. Then
\begin{equation}
  \label{eq:beps-alpha}
  b_{2\eps}\big(L_0,L^{k_i}\big)\geq
  \const \cdot 2^{\alpha k_i} %,\textrm{ where }\alpha>0
\end{equation}
for some sequence $k_i\to\infty$. (Here and in what follows, the value
of the constant $\const$ can change from one formula to another and
even in different parts of the same formula.) By Yomdin's theorem (see
\cite{Yo} and also the survey \cite{Gr}), it is sufficient to show
that
\begin{equation}
  \label{eq:vol}
\vol(L^{k_i})\geq \const \cdot 2^{\alpha k_i}
\end{equation}
with the constant independent of $i$, but possibly depending on
  $\alpha$ and $\eps$.  Indeed, then $\alpha\leq \htop(\varphi)$.
Passing to the limit as $\eps\to 0$ and
$\alpha\to \hbr(\varphi; L_0,L_1)$, we obtain~\eqref{eq:hbr-leq-htop}.

To prove \eqref{eq:vol}, pick a Lagrangian tomograph with core $L_0$.
Thus we have a family of Lagrangian submanifolds $L_s$ smoothly
parametrized by the closed $d$-dimensional ball $B^d=B^d(r)$ of radius
$r>0$ (for some large $d$). By shrinking $B$ if necessary, we can
ensure that these submanifolds have the following properties:
\begin{itemize}
\item[\reflb{ELT1}{(i)}] The Lagrangian submanifolds $L_s$ are
  Hamiltonian isotopic to $L_0$ and the Hofer distance between $L_0$
  and $L_s$ is small:
  \begin{equation}
    \label{eq:H-close}
  d_{\hn}(L_0,L_s)<\eps/2
\end{equation}
(In fact, $L_s$ can even be taken $C^\infty$-close to $L_0$.)

\item[\reflb{ELT2}{(ii)}] The Lagrangian submanifold $L_s$ is
  transverse to $L^k$ for all $k$ and almost all $s\in B^d$. Let
  $$
  N_k(s):=|L_s\cap L^k|
  $$
  be the number of intersections of $L_s$ and $L^k$. Then $N_k(s)$ is
  a measurable function, finite for almost all $s$. (In fact, we can
  even have $N_k(s)$ finite for all $s\in B^d$; see Remark
  \ref{rmk:N-finite}.) Furthermore, by Crofton's inequality (Lemma
  \ref{lemma:Crofton}), we have
\begin{equation}
  \label{eq:vol-int}
  \int_{B^d} N_k(s)\, ds\leq \const\cdot \vol (L^k),
\end{equation}
where $ds$ is the standard Lebesgue measure on $B^d$.
\end{itemize}
Note that all conditions in \ref{ELT1} and \ref{ELT2}, other than
\eqref{eq:H-close}, are satisfied automatically; see Section
\ref{sec:Crofton}. As we have already pointed out, to guarantee
\eqref{eq:H-close} we can simply shrink $B$.

Then, whenever $L_s\pitchfork L^k$, we have
$$
  N_k(s) \geq b_\eps\big(L_s,L^k\big)
         \geq b_{2\eps}\big(L_0,L^k\big).
$$
This is an immediate consequence of \eqref{eq:intersections}
with $L$ replaced by $L^k$, $L'$ replaced by $L_0$ and $L''$
replaced by $L_s$.  Hence, by \eqref{eq:beps-alpha},
$$
N_{k_i}(s)\geq \const \cdot 2^{\alpha k_i}
$$
for almost all $s\in B^d$, and \eqref{eq:vol} follows from
\eqref{eq:vol-int}, which finishes the proof of Theorem
  \ref{thm:A2}.

  We emphasize that this argument does not require $L_0$ and $L^k$ to
  be transverse, but only that $L_s\pitchfork L^k$ which holds
  automatically for almost all $s$ regardless of whether
  $L_0\pitchfork L^k$ or not.

\subsection{Entropy and the graph volume growth}
\label{sec:vol}
A consequence of the proof of Theorem \ref{thm:A2} and Theorem
\ref{thm:C} is a relation between the barcode and topological entropy
of $\varphi$ and the exponential growth rate of the volume of the
graph of $\varphi^k$. To be more precise, assume that $M$ is compact
and fix a Riemannian metric on $M$. Denote by
$\Gamma_k\subset M\times M$ the graph of $\varphi^k$ and by
$\vol(\Gamma_k)$ its volume. Set
$$
\hvol(\varphi)=\limsup_{k\to\infty}\frac{\log^{+}\vol(\Gamma_k)}{k}.
$$

\begin{Corollary}
  \label{cor:vol1}
  Let $\varphi$ be a Hamiltonian $C^\infty$-diffeomorphism of a
  compact monotone symplectic manifold. Then
$$
\hbar(\varphi)\leq \hvol(\varphi)\leq \htop(\varphi).
$$
\end{Corollary}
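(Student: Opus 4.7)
The plan is to obtain the two inequalities separately, each by a direct specialization of machinery already established in the paper.

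\emph{Right inequality $\hvol(\varphi)\leq \htop(\varphi)$.} I would apply Yomdin's theorem, \cite{Yo}, to the $C^\infty$-diffeomorphism $\id\times\varphi$ of the compact symplectic square $(M\times M,(-\omega,\omega))$ equipped with the product Riemannian metric, and to the smooth compact submanifold $\Delta\subset M\times M$. Since $(\id\times\varphi)^k(\Delta)=\Gamma_k$ and $\htop(\id\times\varphi)=\htop(\varphi)$, Yomdin's theorem gives
$$
\hvol(\varphi) = \limsup_{k\to\infty}\frac{\log^+\vol(\Gamma_k)}{k} \leq \htop(\id\times\varphi)=\htop(\varphi).
$$

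\emph{Left inequality $\hbr(\varphi)\leq \hvol(\varphi)$.} The key observation is that the proof of Theorem \ref{thm:A2} in Section \ref{sec:pf-A2} actually produces a purely volume-theoretic bound, before Yomdin's theorem is invoked. Indeed, fixing $\eps>0$ and choosing the Lagrangian tomograph $\Psi\colon B^d\times L_0\to M$ of Section \ref{sec:ELT} with $d_{\hn}(L_0,L_s)<\eps/2$ for every $s\in B^d$, the estimate \eqref{eq:intersections} together with the symmetry \eqref{eq:PD} yields $N_k(s)\geq b_{2\eps}(L_0,L^k)$ for almost every $s\in B^d$, independently of $k$. Integrating over $B^d$ and applying Crofton's inequality (Lemma \ref{lemma:Crofton}) produces a constant $C_\eps>0$, depending on $\eps$ and on the tomograph but not on $k$, such that
\begin{equation*}
b_{2\eps}(L_0, L^k)\leq C_\eps\cdot\vol(L^k) \qquad \text{for all } k\in\N.
\end{equation*}
I would then specialize this inequality to the symplectic square with $L_0=L_1=\Delta$ and the Hamiltonian diffeomorphism $\id\times\varphi$, so that $L^k=\Gamma_k$. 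Taking logarithms, dividing by $k$, and passing to the upper limit gives $\hbr_{2\eps}(\varphi)\leq \hvol(\varphi)$; sending $\eps\searrow 0$ yields $\hbr(\varphi)\leq \hvol(\varphi)$.

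\emph{Obstacles.} The only point that needs verification is that the Lagrangian tomograph construction of Section \ref{sec:ELT} transfers to $\Delta\subset (M\times M,(-\omega,\omega))$. This is immediate: $\Delta$ is diffeomorphic to the compact smooth manifold $M$, which certainly immerses into some $\R^d$, and the monotonicity of $M$ with $N_M\geq 1$ guarantees that $\Delta$ is monotone in the symplectic square with $N_\Delta\geq 2$, so all hypotheses needed for the tomograph construction and for the barcode machinery are satisfied. Thus the argument of Section \ref{sec:pf-A2} goes through verbatim and no genuinely new difficulty arises; the corollary is essentially a bookkeeping consequence of extracting the intermediate volume bound from the proof of Theorem \ref{thm:A2}.
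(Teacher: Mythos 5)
Your proposal is correct and takes essentially the same route as the paper: the paper itself explains that the left inequality is extracted from the intermediate volume bound in the proof of Theorem \ref{thm:A2} (your estimate $b_{2\eps}(L_0,L^k)\leq C_\eps\,\vol(L^k)$ via the tomograph and Crofton's inequality, specialized to $\Delta\subset M\times M$), while the right inequality is precisely Yomdin's theorem applied to $\id\times\varphi$ and the submanifold $\Delta$. The only bookkeeping you have filled in explicitly — that the tomograph construction applies to $\Delta$ because $M$ immerses in some $\R^d$ and $\Delta$ is monotone with $N_\Delta\geq 2$ — is what the paper leaves implicit, and your reasoning is sound.
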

Here the first inequality is an immediate consequence of the proof of
Theorem \ref{thm:A2}, and the second one follows from Yomdin's
theorem, \cite{Yo}, and holds for general
$C^\infty$-diffeomorphisms. Combining these inequalities with Theorem
\ref{thm:C}, we obtain

\begin{Corollary}
  \label{cor:vol2}
  Let $\varphi$ be a Hamiltonian $C^\infty$-diffeomorphism of a closed
  surface. Then
$$
\hvol(\varphi)= \htop(\varphi).
$$
\end{Corollary}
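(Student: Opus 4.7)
The plan is to deduce Corollary \ref{cor:vol2} by a direct sandwich argument combining Corollary \ref{cor:vol1} with Theorem \ref{thm:C}. First I would invoke Corollary \ref{cor:vol1}, which gives the two-sided chain
\[
\hbar(\varphi) \;\leq\; \hvol(\varphi) \;\leq\; \htop(\varphi)
\]
for any Hamiltonian $C^\infty$-diffeomorphism of a compact monotone symplectic manifold. Since every closed surface is monotone (either trivially, when $[\omega]$ is proportional to $c_1$, or because $\pi_2$ is trivial for higher genus, making the monotonicity condition vacuous), this chain applies in the setting at hand.

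Next I would apply Theorem \ref{thm:C}, which asserts that for a closed surface one has the equality $\hbar(\varphi) = \htop(\varphi)$. Substituting this into the leftmost term of the chain yields
\[
\htop(\varphi) \;=\; \hbar(\varphi) \;\leq\; \hvol(\varphi) \;\leq\; \htop(\varphi),
\]
which forces equality throughout. In particular $\hvol(\varphi) = \htop(\varphi)$, completing the proof.

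The only subtle point I would want to double-check is that the hypotheses of Corollary \ref{cor:vol1} are really met for \emph{every} closed surface, not just those to which the monotonicity discussion of Section \ref{sec:conv} applies most directly. For $S^2$ this is automatic; for the torus $\T^2$ and for higher-genus surfaces $\Sigma_g$ with $g\geq 2$ the group $\pi_2(M)$ vanishes, so the monotonicity condition used in the paper's Floer-theoretic setup is vacuously satisfied and the barcode-entropy machinery of Theorems \ref{thm:A} and \ref{thm:C} goes through. Thus there is no obstacle: the corollary is essentially an immediate corollary, and the only ``work'' is noting that the sandwich closes. No new analytic or dynamical input beyond the already-established Theorems \ref{thm:A}, \ref{thm:B}, \ref{thm:C} and Yomdin's theorem is required.
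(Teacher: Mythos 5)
Your proof is correct and is exactly the paper's argument: the sandwich $\hbar(\varphi)\le\hvol(\varphi)\le\htop(\varphi)$ from Corollary \ref{cor:vol1} combined with the equality $\hbr(\varphi)=\htop(\varphi)$ from Theorem \ref{thm:C} forces equality throughout. Your aside checking that every closed surface is monotone (trivially for $S^2$, vacuously for $g\geq 1$ since $\pi_2=0$) is a reasonable sanity check but the paper treats it as implicit.
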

Surprisingly, this equality appears to be new. There are however
similar results in the holomorphic setting; see \cite{Gr:hol}
and the comments therein by S. Cantat for further references.

\section{From horseshoes to barcode entropy}
\label{sec:pf-BC}
Our goal in this section is to prove Theorems \ref{thm:B} and
\ref{thm:C}. Throughout the proofs, $\varphi^t:=\varphi^t_H$ will
stand for the time-dependent Hamiltonian flow of
$H\colon S^1\times M\to \R$, $S^1=\R/\Z$, on a (monotone) symplectic
manifold $M$ and $\varphi:=\varphi_H^1$ will be the Hamiltonian
diffeomorphism generated by $H$.

\subsection{Crossing energy, the proof of Theorem \ref{thm:B} and the
  $\gamma$-norm}
\label{sec:pf-B+energy}
In this section we prove Theorem \ref{thm:B} and also briefly touch
upon an application of the proof to establishing a lower bound on the
$\gamma$-norm of the iterates in the presence of a hyperbolic set. We
break down the proof of the theorem in a few simple steps, some of
which might be of independent interest.

\subsubsection{Generalities and terminology}
\label{sec:general}
The iterate Hamiltonian diffeomorphism $\varphi^k$ is the time-$k$ map
in the Hamiltonian isotopy $\varphi_H^t$ generated by $H$. In what
follows, when working with the Floer equation for this iterate, it is
convenient to denote the Hamiltonian by $H^{\sharp k}$ and refer to
it, somewhat abusing terminology, as the \emph{iterated Hamiltonian};
cf.\ \cite{GG:hyperbolic,GG:PR}. We emphasize that $H^{\sharp k}$ is
the same Hamiltonian as $H$, but now viewed as $k$-periodic in
time. Likewise, the solutions of the Floer equation, \eqref{eq:Floer},
are allowed to be $k$-periodic in time rather than $1$-periodic or,
more generally, defined on a closed domain
$\Sigma\subset \R\times S^1_k$, where $S^1_k=\R/k\Z$, rather than a
domain in $\R\times S^1$. There are, of course, other Hamiltonians,
with easily adjustable period, generating $\varphi^k$ and giving rise
to the same filtered Floer complex, but this is a natural and
convenient choice from the dynamics perspective. Moreover, this choice
becomes essential for the proof of Theorem \ref{thm:cross-energy}; see
\cite{GG:PR}.

Thus, consider solutions $u\colon \Sigma\to M$ of the Floer
equation
\begin{equation}
  \label{eq:Floer}
 J\p_s u=\p_t u-J\nabla H^{\nat k}
\end{equation}
for the iterated Hamiltonian $H^{\nat k}\colon S^1_k\times M\to \R$
with $S^1_k=\R/k\Z$, where $\Sigma\subset \R\times S^1_k$ is a closed
domain, i.e., a closed subset with non-empty interior and $J$ is a
background $k$-periodic in time almost-complex structure. By
definition, the energy of $u$ is
$$
 E(u)=\int_\Sigma \|\p_s u\|^2 \, ds dt.
$$
where $\|\cdot\|$ stands for the norm with respect to
$\left<\cdot\, ,\cdot\right>=\omega(\cdot,J\cdot)$, and hence
$\|\cdot\|$ depends on $J$.  Recall that when $\Sigma=\R\times S^1_k$
and $u$ is asymptotic to $k$-periodic orbits $x$ at $-\infty$ and $y$
at $\infty$, we have
$$
E(u)=\CA(x)-\CA(y),
$$
i.e., $E(u)$ is the action difference between $x$ and $y$. Here we
treat $x$ and $y$ as capped $k$-periodic orbits of $H$ with the
capping of $y$ obtained by ``attaching'' $u$ to the capping of $x$;
see Section \ref{sec:Floer-complex}.

Throughout the proof, it will be convenient to work with the extended
phase space $\tilde{M}=S^1\times M$ with $S^1=\R/\Z$. The
time-dependent flow $\varphi^t$ lifts as the genuine flow
$\tilde{\varphi}^t$ on $\tilde{M}$ given by
$$
 \tilde{\varphi}^t(\theta,p)=\big(\theta+t,\varphi^t(p)\big)
$$
generated by the vector field $\p_\theta +X_H$, where in the first
term $t$ is viewed as an element of $S^1=\R/\Z$. Likewise, any map
$z\colon \R\to M$ or $z\colon S^1_k\to M$ lifts to the map
$\tilde{z}(t)=(t, z(t))$, and in a similar vein a solution $u$ of the
Floer equation lifts to a map $\tilde{u}\colon \Sigma\to
\tilde{M}$. If $u$ is asymptotic to $x$ and $y$, the lift $\tilde{u}$
is asymptotic to $\tx$ and $\ty$ in the natural sense. In what
follows, a lift from $M$ to $\tilde{M}$ will always be indicated by
the tilde and we will identify $M$ with $\{0\}\times M$.

A loop $x\colon S^1_k\to M$ is a $k$-periodic orbit of $\varphi^t$ if
and only if its lift $\tx$ is a $k$-periodic orbit of
$\tilde{\varphi}^t$ and if and only if the sequence
$\hat{x}=\{x_i:=x(i)\mid i\in\Z_k\subset S^1_k\}$ formed by the
intersections of $\tx$ with the cross-section $M$ is a $k$-periodic
orbit of $\varphi$.

\subsubsection{Crossing energy}
\label{sec:cross-energy}
Next let us recall the crossing energy theorem, \cite[Thm.\
6.1]{GG:PR} (see also \cite{GG:hyperbolic}), which is crucial to the
proof.  Let $K\subset M$ be a compact invariant set of a Hamiltonian
diffeomorphism $\varphi$ of $M$. Recall that $K$ is said to be
\emph{locally maximal} or isolated (as an invariant set) or basic if
there exists a neighborhood $U\supset K$ such that for no initial
condition $p\in U\setminus K$ the orbit through $p$ is contained in
$U$, i.e., there exists $k\in\Z$, possibly depending on $p$, such that
$\varphi^k(p)\not\in U$. The neighborhood $U$ is called an isolating
neighborhood of $K$. Then any neighborhood of $K$ contained in $U$ is
also isolating, and hence such neighborhoods can be made arbitrarily
small. In other words, whenever $U\supset V\supset K$ and $U$ is an
isolating neighborhood and $V$ is open, $V$ is also an isolating
neighborhood. For a flow, a locally maximal set is defined in a
similar fashion.

The set $K$ naturally lifts to an invariant set
$\tilde{K}\subset \tilde{M}$ of the flow $\tilde{\varphi}^t$, which is
the union of the integral curves through $K=\{0\}\times K$. (Since $K$
is invariant it suffices to take only $t\in [0,1]$.) The set $K$ is
locally maximal for $\varphi$ if and only if $\tilde{K}$ is locally
maximal for the flow.

As in Section \ref{sec:general}, let $u\colon \Sigma\to M$ be a
solution of the Floer equation, \eqref{eq:Floer}, where
$\Sigma\subset \R\times S^1_k$ is a closed domain.  We say that $u$ is
\emph{asymptotic} to $K$ at $\infty$ (or at $-\infty$) if for any
neighborhood $\tilde{U}$ of $\tilde{K}$ there is a half-cylinder
$[s_{\tilde{U}},\,\infty)\times S^1_k$ (or
$(-\infty,\,s_{\tilde{U}}]\times S^1_k$) in $\Sigma$ which is mapped
into $\tilde{U}$ by $\tilde{u}$. For instance, $u$ is asymptotic to
$K$ whenever $u(s,\cdot)$ uniformly converges as $s\to \infty$ or
$s\to -\infty$ to a $k$-periodic orbit $x$ with $x(0)\in K$ (but not
necessarily with $x(t)\in K$ for all $t\in S^1_k$). In this case,
abusing terminology, we will also say that $u$ is asymptotic to the
$k$-periodic orbit $\hat{x}:=\{x_i:=x(i)\mid i\in\Z_k\}$ of
$\varphi$. We emphasize that here the domain $\Sigma$ of $u$ need not
be a cylinder, although to be asymptotic to $K$ it must contain a
half-cylinder.

Finally, fix a (sufficiently small) isolating neighborhood $\tilde{U}$
of $\tilde{K}$. Set
$\p \tilde{U}:=\mathrm{closure}(\tilde{U})\setminus \tilde{U}$.

\begin{Theorem}[Crossing Energy Theorem, Thm.\ 6.1 in \cite{GG:PR}]
  \label{thm:cross-energy}
  Fix a 1-periodic in time almost complex structure $J$ on $M$.  Let
  $J'$ be a $k$-periodic in time almost complex structure on $M$ which
  is sufficiently $C^\infty$-close to $J$, depending on $k$, uniformly
  on $U$. Furthermore, let $u\colon \Sigma\to M$, where
  $\Sigma\subset \R\times S^1_k$, be a solution of the Floer equation
  for $J'$ and $H^{\sharp k}$, asymptotic to $K$ as $s\to\infty$ or
  $s\to-\infty$, and such that
 \begin{itemize}

 \item[\reflb{CEa}{(a)}] either $\p\Sigma\neq \emptyset$ and
   $\tilde{u}(\p\Sigma)\subset \p \tilde{U}$

 \item[\reflb{CEb}{(b)}] or $\Sigma=\R\times S^1_k$ and
   $\tilde{u}(\Sigma)\not\subset \tilde{U}$.
 \end{itemize}
 Then there exists a constant $c_\infty>0$, independent of $k$, $J'$,
 $u$ and $\Sigma$ such that
 \begin{equation}
   \label{eq:cross-energy}
   E(u)>c_\infty .
 \end{equation}

\end{Theorem}

Here we are mainly interested in Case \ref{CEb} of the theorem. It is
easy to see that this case is a consequence of the more general Case
\ref{CEa} which was actually established in \cite{GG:PR}. However,
Case \ref{CEb} can also be proved directly by an argument which is
considerably simpler than the original proof therein; cf.\ Remark
\ref{rmk:cross-energy}.

\begin{Remark}
  It is worth keeping in mind that the lower bound $c_\infty$ depends
  on the choice of an isolating neighborhood $\tU$ of $\tK$: a smaller
  neighborhood might necessitate a smaller lower bound. The threshold
  on how close $J'$ and $J$ need to be for \eqref{eq:cross-energy} to
  hold depends on $k$. Finally, as readily follows from the proof, the
  lower bound $c_\infty$ can also be chosen to be stable with respect
  to $C^\infty$-small perturbations of $H^{\sharp k}$, i.e., so that
  \eqref{eq:cross-energy} holds for solutions of the Floer equation
  for all $k$-periodic Hamiltonians $C^\infty$-close to
  $H^{\sharp k}$.
\end{Remark}

\subsubsection{Energy bound for Floer trajectories asymptotic to $K$.}
\label{sec:energy-bound}
Consider Floer trajectories $u\colon \Sigma=\R\times S^1_k\to M$ for
some $k$-periodic almost complex structure $J'$ sufficiently close to
a fixed 1-periodic almost complex structure $J$ as in Theorem
\ref{thm:cross-energy} and asymptotic to $k$-periodic orbits $x$ and
$y$ of $\varphi^t$ with $x(0)\in K$.  (It does not matter if $u$ is a
asymptotic to $x$ at $\infty$ or $-\infty$, and whether $y(0)$ is in
$K$ or not.) The key to the proof of Theorem \ref{thm:B} is the
following result which is an easy consequence of Theorem
\ref{thm:cross-energy} and the Anosov Closing Lemma, \cite[Thm.\
6.4.15]{KH}.

\begin{Proposition}
  \label{prop:energy}
  Assume that $K$ is a locally maximal hyperbolic invariant set of
  $\varphi$. Then
  \begin{equation}
    \label{eq:gap}
  E(u)=|\CA(x)-\CA(y)|>\eps_K, \textrm{ unless } E(u)=0,
 \end{equation}
 for some constant $\eps_K>0$, independent of $u$ and $x$ and $y$, and
 also of $k$ and $J'$ as long as $J'$ is sufficiently $C^\infty$-close
 to $J$ in the class of $k$-periodic in time almost complex structures
 on $M$.
  \end{Proposition}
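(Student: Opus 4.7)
The approach combines the Crossing Energy Theorem with the expansivity of hyperbolic sets and the monotonicity of $M$. Fix a small isolating neighborhood $\tilde U$ of $\tilde K$ in $\tilde M$, a $1$-periodic almost complex structure $J$, and let $c_\infty>0$ be the constant from Theorem~\ref{thm:cross-energy}. For a Floer cylinder $u\colon S^1_k\times\R\to M$ as in the statement, I argue by a dichotomy. If $\tilde u(\Sigma)\not\subset\tilde U$, then hypothesis~\ref{CEb} of the Crossing Energy Theorem yields at once $E(u)>c_\infty$. If instead $\tilde u(\Sigma)\subset\tilde U$, then the $k$-periodic orbits $\tilde x,\tilde y$ of $\tilde\varphi^t$ lie in $\overline{\tilde U}$; by local maximality of $\tilde K$ (inherited from that of $K$) both are contained in $\tilde K$, so $\hat x$ and $\hat y$ are $k$-periodic orbits of $\varphi$ contained in $K$.

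The heart of the proof is to conclude from the trapped condition that in fact $\hat x=\hat y$ as orbits of $\varphi$. Here I invoke \emph{expansivity} of the locally maximal hyperbolic set $K$, a standard consequence of the Anosov Closing Lemma (see \cite[Thm.~6.4.15 and its corollaries]{KH}): there is $\delta>0$, depending only on $\varphi$ and $K$ and crucially independent of $k$, such that any two points $p,q\in K$ with $d(\varphi^i(p),\varphi^i(q))<\delta$ for all $i\in\Z$ must coincide. Using the hyperbolic local product structure -- or equivalently a Markov partition -- one can choose $\tilde U$ fine enough that each fiber $\tilde U\cap(\{t\}\times M)$ decomposes into components of diameter strictly less than $\delta$. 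The trapped condition then forces $d(x_i,y_i)<\delta$ for every $i\in\{0,\ldots,k-1\}$, and $k$-periodicity extends this to all $i\in\Z$, so expansivity gives $x_0=y_0$ and hence $\hat x=\hat y$.

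With $\hat x=\hat y$ as loops, $x$ and $y$ differ only by the choice of capping, so $E(u)=\CA(x)-\CA(y)$ equals the symplectic area of the recapping class -- a sphere in the contractible case, more generally a torus obtained by gluing the two cappings along the common underlying loop. By monotonicity of $M$, this area lies in $2\kappa\,\Z$, so $E(u)=0$ or $|E(u)|\geq 2\kappa$. Setting $\eps_K:=\min(c_\infty,2\kappa)$ then yields the proposition, with uniformity in $k$ and in $J'$ inherited from Theorem~\ref{thm:cross-energy}.

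The step I expect to be most delicate is the geometric arrangement in the trapped case -- choosing $\tilde U$ simultaneously to be isolating for $\tilde K$ and to have fiberwise connected components of diameter below the expansivity constant. This is exactly where the Anosov Closing Lemma and the hyperbolic local product/Markov partition structure enter, beyond what the Crossing Energy Theorem already provides, and is presumably why the introduction lists both tools as the basis of the proof.
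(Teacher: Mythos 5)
Your overall strategy is the right one, and your first case (cylinder escapes $\tilde U$, apply the Crossing Energy Theorem) matches the paper exactly. The trapped case, however, has two gaps, both of which the paper closes by a quantitative argument rather than the qualitative one you use.

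First, you claim one can shrink $\tilde U$ so that each fiber $\tilde U\cap(\{t\}\times M)$ has connected components of diameter below the expansivity constant $\delta$. This is possible if and only if $K$ is totally disconnected: if $K$ has a connected component $C$ with $\mathrm{diam}\,C\ge\delta$, then every neighborhood of $K$ has a connected component containing $C$. Nothing in the hypotheses forces $K$ to be totally disconnected, and neither the local product structure nor a Markov partition delivers the disjointness you need. The paper avoids this by establishing a quantitative estimate (Lemma~\ref{lemma:pseudo-orbit}): for every $s$ the $k$-tuple $\hat z_s=\{u(i,s)\}$ is a periodic $\eta$-pseudo-orbit with $\eta=O\bigl(E(u)^{1/4}\bigr)$, a consequence of the pointwise bound $\|\p_s u\|=O\bigl(E(u)^{1/4}\bigr)$. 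Then the Anosov Closing Lemma produces, for $E(u)$ small, a unique shadowing periodic orbit $\hat w_s\subset K$; uniqueness (via expansivity) makes $\hat w_s$ locally constant in $s$, hence constant, and matching the two ends gives $\hat x=\hat y$. This works for any compact locally maximal hyperbolic $K$, connected or not, and the threshold on $E(u)$ is uniform in $k$ and $J'$ because the pseudo-orbit constant and the closing/expansivity constants are.

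Second, even granting $\hat x=\hat y$, your conclusion that $E(u)$ lies in $2\kappa\,\Z$ is incorrect in general. When $x=y$ as loops, the recapping class is a \emph{torus}, not a sphere, so its $\omega$-period lies in $\Gamma_{\fc}$, not in $2\kappa\langle c_1(TM),\pi_2(M)\rangle$. The group $\Gamma_{\fc}$ for non-contractible classes has no a priori positive lower bound and can even be dense in $\R$ (see Example~\ref{ex:Gamma}); monotonicity controls only $\pi_2(M)$. The paper sidesteps this entirely: using again the pointwise smallness of $\|\p_s u\|$, all loops $t\mapsto u(s,t)$ stay uniformly $C^0$-close to $x$, so the torus $u$ contracts onto $x$ in $M$ and therefore $E(u)=\int_u\omega=0$, with no appeal to monotonicity or rationality of the period group. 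Setting $\eps_K$ to be the minimum of $c_\infty$ and the threshold guaranteeing both the pseudo-orbit/closing step and the $C^0$-nearness step then gives \eqref{eq:gap}.
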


  \begin{proof} Pick an almost complex structure $J'$ which is
    sufficiently close to a 1-periodic almost-complex structure $J$
    and denote by $d$ the distance on $M$ with respect to an arbitrary
    Riemannian metric. We will need the following standard fact:

    \begin{Lemma}
      \label{lemma:pseudo-orbit}
      Let $u\colon \R\times S^1_k\to M$ be a solution of the Floer
      equation, \eqref{eq:Floer}. Assume that $E(u)$ is sufficiently
      small, i.e., $E(u)<e$ with an upper bound $e>0$ depending only
      on $(M, \omega, J)$ and $H$. Then for every $s\in\R$ the set
    $$
    \hat{z}:=\{z_i:=u(s,i)\mid i\in \Z_k\}
    $$
    is a periodic $\eta$-pseudo-orbit of $\varphi$, i.e.,
    \begin{equation}
      \label{eq:pseudo-orbit}
      d\big(\varphi(z_i),z_{i+1}\big)<\eta \textrm{ for all $i\in\Z_k$},
    \end{equation}
    where we can take
    \begin{equation}
      \label{eq:eta-energy}
    \eta= O\big(E(u)^{1/4}\big).
  \end{equation}
\end{Lemma}

The key point of this lemma is that every ``circle'' in a Floer
cylinder $u$ for $H^{\# k}$ is in essense an $\eta$-pseudo-orbit with
$\eta= O\big(E(u)^{1/4}\big)$, provided that $E(u)$ is below a certain
threshold $e>0$ which depends only on $(M, \omega, J)$ and $H$, but
not $u$ or $k$.
  
\begin{proof}
  Recall that when $E(u)$ is sufficiently small (with an upper bound
  $e$ depending on $M$ and $H$ but not $u$ and $k$), we have the
  pointwise upper bound
  \begin{equation}
    \label{eq:ptwise-bnd}
    \| \p_s u\|\leq \const\cdot E(u)^{1/4}=O\big(E(u)^{1/4}\big),
  \end{equation}
  where the constant is again independent of $u$ and $k$ and of $J'$
  when $J'$ is close to $J$; see \cite[Sect.\ 1.5]{Sa} or
  \cite[p.\ 542--543]{GG:LS} or, for a different proof,
  \cite{Br}. (Note that it is essential here that the domain of $u$ is
  the entire cylinder $\R\times S^1_k$.) Now, \eqref{eq:pseudo-orbit}
  and \eqref{eq:eta-energy} follow from the Floer equation,
  \eqref{eq:Floer}, and the triangle inequality.
  \end{proof}
  
  To prove the proposition, we consider two cases depending on the
  location of the orbit~$y$.

  The first case is when $y(0)\not\in K$. Then $\ty$ is not entirely
  contained in any isolating neighborhood of $\tilde{K}$. Applying
  Theorem \ref{thm:cross-energy} (Case b), we obtain \eqref{eq:gap}
  with $\eps_K=c_\infty$.

  The second case is when $y(0)\in K$. Then both $\hat{x}$ and
  $\hat{y}:=\{y_i:=y(i)\mid i\in\Z_k\}$ are $k$-periodic orbits of
  $\varphi$ in $K$. Let us assume that $u$ is asymptotic to $x$ at
  $-\infty$; the other case is handled similarly.  We will show that
  then $x=y$ and $E(u)=0$ when $E(u)$ is below a certain treshold
  which depends only on $M$, $J$, $K$ and $H$, but neither on $u$ nor
  $x$ nor $y$ nor~$k$.

  To this end, fix a sufficiently small isolating neighborhood $U$ of
  $K$. In particular, we may assume that the Anosov Closing Lemma
  applies to $\varphi$ on $U$; see \cite[Thm.\ 6.4.15]{KH}. Then, by
  Theorem \ref{thm:cross-energy}, $\tilde{u}$ is entirely contained in
  $\tilde{U}$. Hence, $\hat{z}$ is contained in $\tilde{U}\cap M=U$
  for all $s\in \R$.  Assume furthermore that $E(u)$ is so small that
  Lemma \ref{lemma:pseudo-orbit} applies and thus
  $\hat{z}=\{u(s,i)\mid i\in\Z_k\}$ is a periodic $\eta$-pseudo-orbit
  in $U$ for every $s$.

  Therefore, by the Anosov Closing Lemma, there exists a true periodic
  orbit $\hat{w}$ in $K$ shadowing $\hat{z}$. Namely, we have
  $d(z_i,w_i)<C\eta$, $i\in\Z_k$, for some constant $C>0$, which
  depends only on $U$ and $\varphi$. By \cite[Cor.\ 6.4.10]{KH},
  $\varphi|_K$ is expansive: there is a constant $\delta>0$ such that
  any two distinct orbits $\{v_i\}$ and $\{v'_i\}$ of $\varphi$ in $K$
  are at least $\delta$ apart, i.e., $d(v_j,v'_j)>\delta$ for some
  $j\in \Z$. It follows that when $E(u)$ and hence $\eta$ are small
  enough (e.g., $2C\eta<\delta$), the orbit $\hat{w}$ is unique and
  depends continuously on $\hat{z}$ and thus on $s\in\R$. Therefore,
  again since $\varphi|_K$ is expansive, $\hat{w}$ is independent of
  $s\in\R$. Clearly, when $s$ is close to $-\infty$, we have
  $\hat{w}=\hat{x}$, and $\hat{w}=\hat{y}$ when $s$ is close to
  $\infty$. Thus, $x=y$, and setting $u(\infty, t)=x(t)=y(t)$, we can
  view $u$ as a $C^0$-map from
  $\T^2=\big(\R\cup\{\infty\}\big)\times S^1_k$ to $M$. This map is
  smooth on the complement to $\{\infty\}\times S^1_k$. Furthermore,
  it is easy to see from \eqref{eq:ptwise-bnd} and Lemma
  \ref{lemma:pseudo-orbit} that for every $s\in \R$ the loop
  $t\mapsto u(s,t)$ is $C^0$-close to the loop $x=y$ pointwise
  uniformly in $s$.  Hence, the loop $s\mapsto u(s,t)$ lies in a small
  neighborhood of $x(t)$. As a consequence, $u$ contracts to $x$ in
  $M$, and hence $E(u)=0$.  Indeed, $E(u)$ is the difference of
  actions of capped periodic orbits. Since $x=y$, this difference is
  the integral of $\omega$ over~$u$. The cycle represented by $u$ is
  homologous to zero, and hence the integral is zero.
\end{proof}
\begin{Remark}
  \label{rmk:cross-energy}
  The proof of Case \ref{CEa} of Theorem \ref{thm:cross-energy} in
  \cite{GG:PR} relies on a variant of the Gromov Compactness Theorem
  from \cite{Fi}. As has already been mentioned, Case \ref{CEb} used
  here follows from Case \ref{CEa}, but it can also be proved directly
  with somewhat simpler tools under the slightly more restrictive
  requirement that, as in Lemma \ref{lemma:pseudo-orbit}, $J'$ is
  sufficiently $C^\infty$-close to $J$ in the class of $k$-periodic in
  time almost complex structures on $M$. Namely, arguing by
  contradiction, assume that $E(u)$ can be arbitrarily small, i.e.,
  there exists a sequence $u_k\colon \R\times S^1_k\to M$, where
  $k=k_i\to \infty$ with $E(u_k)\to 0$ such that the image of $\tu_k$
  is not entirely contained in the closure of $\tU$.  Consider the
  largest half-cylinder in $\R\times S^1_k$ whose image is contained
  in the closure. By Lemma \ref{lemma:pseudo-orbit}, the restriction
  of $u_k$ to the boundary of this cylinder gives rise to an
  $\eta$-pseudo-orbit with $\eta=O\big(E(u_k)\big)$ passing through a
  point close to $\p U$. Thus we obtain longer and longer
  two-directional $\eta$-pseudo-orbits with $\eta\to 0$ passing
  through a point close to $\p U$. Passing to the limit as
  $E(u_k)\to 0$, we obtain an entire orbit of $\varphi$ which is
  contained in the closure of $U$, but not in $K$. The argument is
  spelled out in detail in a very similar context in
  \cite{CGGM:hyperbolic, CGGM:entropy}.
\end{Remark}

\begin{Remark}
  Since this work appeared as a preprint, variants of Proposition
  \ref{prop:energy} for geodesic and Reeb flows were proved in
  \cite{GGM} and in \cite{CGGM:hyperbolic, CGGM:entropy}, leading to
  Reeb analogues to Theorems \ref{thm:B} and \ref{thm:C} and also of
  multiplicity results along the lines of \cite{GG:hyperbolic}. A
  version of Proposition \ref{prop:energy} for Lagrangian
  intersections has been recently proved in \cite{Me:Entropy}.
\end{Remark}

\subsubsection{Proof of Theorem \ref{thm:B} }
\label{sec:pf-B}
Recall that a compact invariant set $K$ of $\varphi$ is said to be
\emph{locally maximal} (or basic) if there exists a neighborhood
$U\supset K$, called an \emph{isolating neighborhood}, such that $K$
is the maximal invariant subset of $U$ or, in other words, $x\in K$
whenever the entire orbit $\{\varphi^k(x)\mid k\in\Z\}$ through $x$ is
contained in $U$ or, equivalently,
$$
K=\bigcap_{k\in\Z}\varphi^k(U).
$$
By \cite[Thm.\ 3.3]{ACW} and the Variational Principle, \cite[Thm.\
4.5.3]{KH}, for every hyperbolic set $K$ there exists a locally
maximal hyperbolic set $K'$ with nearly the same entropy.  (In fact,
the hyperbolicity condition is essential for \cite[Thm.\ 3.3]{ACW} but
not for the Variational Principle.)  To be more precise, for every
$\delta>0$, one can find $K'$ such that
$\htop\big(\varphi|_{K'}\big)\geq \htop
\big(\varphi|_K\big)-\delta$. As a consequence, we can assume the
hyperbolic set $K$ in the theorem to be locally maximal.

Denote by $p(k)$ the number of $k$-periodic points of
$\varphi|_K$. Since $K$ is hyperbolic, by \cite[Thm.\ 18.5.1]{KH}, we
have
\begin{equation}
\label{eq:p-htop}
\htop(\varphi|_K)=\limsup_{k\to\infty} \frac{\log^+ p(k)}{k}.
\end{equation}
Hence, to prove the theorem, it is sufficient to show that 
\begin{equation}
  \label{eq:p-b}
b_\eps(\varphi^k)\geq p(k)/2
\end{equation}
when $\eps>0$ is small. We will use Proposition \ref{prop:energy} to
prove this for $\eps<\eps_K$.

Fix $k\geq 1$ and recall from Section \ref{sec:def2} that the limit
\eqref{eq:b-eps3} in the definition of $b_\eps(\varphi^k)$ is
attained, i.e, there exists non-degenerate, arbitrarily
$C^\infty$-small perturbations $\psi$ of $\varphi^k$ such that
\begin{equation}
  \label{eq:psi-phi}
  b_\eps(\varphi^k)= b_\eps(\psi).
\end{equation}

All $k$-periodic points of $\varphi$ in $K$ are non-degenerate and
hence persist as fixed points of $\psi$ or, equivalently, as
$k$-periodic orbits $S^1_k\to M$ of the Hamiltonian flow
$\psi^t$. Moreover, by Proposition \ref{prop:energy}, $E(u)>\eps_K$
for any Floer trajectory $u$ asymptotic to such an orbit $x$ of $\psi$
when $\psi$ is sufficiently $C^\infty$-close to $\varphi^k$.

Let $\mathcal{K}$ be the collection of fixed points of $\psi$
corresponding to the $k$-periodic points of $\varphi$ in $K$. There
are exactly $p(k)$ of them: $|\mathcal{K}|=p(k)$.  By Proposition
\ref{prop:energy} every such fixed point is $\eps$-isolated in the
sense of Section \ref{sec:isolated} and, by Proposition
\ref{prop:isolated} and \eqref{eq:psi-phi},
$$
b_\eps(\varphi^k)= b_\eps(\psi)\geq |\mathcal{K}|/2=p(k)/2,
$$
which proves \eqref{eq:p-b} and completes the proof of the
theorem. \qed

\subsubsection{Application to the $\gamma$-norm of the iterates}
\label{sec:gamma}
The proof of Theorem \ref{thm:B} has an application to symplectic
topology, relating the behavior of the $\gamma$-norm
$\gamma\big(\varphi^k\big)$ and the Hofer norm $\|\varphi^k\|_{\hn}$
as $k\to\infty$ to the existence of hyperbolic locally maximal
invariant subsets. Namely, recall from \cite[Thm.\ A]{KS} that
$$
\beta_{\max}(\varphi)\leq \gamma(\varphi) \leq \|\varphi\|_{\hn},
$$
where $\beta_{\max}$ is the boundary depth; see Section
\ref{sec:barcodes} and \cite{Us0,Us}. (Strictly speaking, the Floer
complex in \cite{KS} is restricted to the contractible free homotopy
class by a background assumption, but the inequality still holds
without this restriction.) Therefore, $\gamma\big(\varphi^k\big)>\eps$
whenever $b_\eps\big(\varphi^k\big)$ 
is large enough to guarantee that there is a finite bar of length
greater than $\eps$. For instance, since the Floer persistence module
has exactly $\dim \H_*(M;\F)$ infinite bars, to have the sequence
$\gamma\big(\varphi^k\big)$ bounded away from zero it suffices to
ensure that $b_\eps\big(\varphi^k\big)>\dim\H_*(M;\F)$ for some
$\eps>0$ and all large $k$.

This is the case, for instance, when $\varphi$ has a locally maximal
hyperbolic set $K$ such that for each large $k$ the set $K$ contains
more than $\dim\H_*(M;\F)$ periodic points. For instance, $K$ can be a
horseshoe in the sense of \cite[Sect.\ 6.5]{KH} or, more generally, a
hyperbolic locally maximal invariant subset such that $p(k)\to\infty$
in the notation of Section \ref{sec:pf-B} or just a collection of more
than $\dim\H_*(M;\F)$ hyperbolic fixed points. Alternatively, it is
enough to require that $\dim\H_{\mathit{odd}}(M;\F)=0$ and $K$
contains a periodic orbit of index $m(k)$ such that $m(k)-n$ is odd.
(For instance, this is so when $K$ is a positive hyperbolic fixed
point and $M=S^2$. To prove Proposition \ref{prop:gamma} below in this
case, it is useful to (re)introduce the $\Z_2$-grading on
$\CF(\varphi)$.)  Summarizing these observations we obtain the
following.

\begin{Proposition}
  \label{prop:gamma}
  Assume that $\varphi$ has a locally maximal hyperbolic set $K$ such
  that for every sufficiently large $k$, one of the following two
  conditions is met:

\begin{itemize}

\item[\rm{(i)}] $K$ contains more than $\dim \H_*(M;\F)$ $k$-periodic
  points; or

\item[\rm{(ii)}] $\dim \H_{\mathit{odd}}(M;\F)=0$ and $K$ contains a
  $k$-periodic orbit of index $m(k)$ such that $m(k)-n$ is odd.
  
\end{itemize}
Then the sequences $\gamma\big(\varphi^k\big)$ and
$\big\|\varphi^k\big\|_{\hn}$ are bounded away from zero.
\end{Proposition}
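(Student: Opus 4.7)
The plan is to combine the crossing-energy machinery of Section~\ref{sec:pf-B+energy} with the algebraic input of Proposition~\ref{prop:isolated}, concluding via the inequality $\beta_{\max}(\varphi^k)\leq\gamma(\varphi^k)\leq\|\varphi^k\|_{\hn}$ recalled in Section~\ref{sec:gamma}. It suffices to exhibit $\eps>0$, independent of $k$, such that for every sufficiently large $k$ the barcode $\CB(\varphi^k)$ contains a finite bar of length greater than $\eps$, since then $\beta_{\max}(\varphi^k)>\eps$ and both norms are bounded away from zero. Fix $\eps\in(0,\eps_K)$ with $\eps_K$ the constant of Proposition~\ref{prop:energy} for $K$, and, as in the proof of Theorem~\ref{thm:B}, replace $\varphi^k$ by a non-degenerate $C^\infty$-small perturbation $\psi$ attaining the liminf in \eqref{eq:b-eps3} so that $b_\eps(\psi)=b_\eps(\varphi^k)$; when $\psi$ is close enough to $\varphi^k$, Proposition~\ref{prop:energy} ensures that each fixed point of $\psi$ coming from a $k$-periodic orbit of $\varphi$ in $K$ is $\eps$-isolated in the Floer graph of $\psi$.

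Under hypothesis~(i), the $\Lambda$-span $V$ of these $\eps$-isolated generators has dimension $p(k)>\dim \H_*(M;\F)\geq\dim \HF(\varphi^k)$. Revisiting the proof of Proposition~\ref{prop:isolated}, the construction there produces, from $V$, closed and exact subspaces whose total dimension is $\dim V$. Concretely, if the subspace $W_1=\p_{\Fl}(Y_s)$ built there is nonzero, it already contains an $\eps$-robust exact vector; otherwise one obtains $\dim V$ closed vectors with pairwise distinct leading action terms among the $\eps$-isolated generators, and since $\dim V>\dim\HF(\varphi^k)$ their image in $\HF(\varphi^k)$ has nontrivial kernel consisting of exact closed vectors, which are $\eps$-robust by the observation in that proof on exact combinations of orthogonal vectors with distinct isolated leading terms. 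In either case $\CB(\varphi^k)$ contains a finite bar of length greater than $\eps$, hence $\beta_{\max}(\varphi^k)>\eps$.

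Under hypothesis~(ii) one instead uses the $\Z_2$-grading on $\CF(\varphi^k)$ by Conley--Zehnder parity. The assumption $\dim \H_{\mathit{odd}}(M;\F)=0$ forces $\HF(\varphi^k)$, and hence every infinite bar, to lie in even degree. Let $x$ be the $\eps$-isolated $k$-periodic orbit in $K$ with $m(k)-n$ odd; its degree in $\CF(\varphi^k)$ is odd. If $\p_{\Fl} x=0$, then $x$ represents an odd-degree class in $\HF(\varphi^k)$ and is therefore exact, and the $\eps$-isolation of $x$ forces $\CA(z)-\CA(x)>\eps$ for every $z$ with $\p_{\Fl} z=x$, producing a finite bar of length greater than $\eps$. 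If $\p_{\Fl} x\neq 0$, applying the one-dimensional case of Proposition~\ref{prop:isolated} to the span of $x$ yields the same conclusion. The main technical content is already packaged in Proposition~\ref{prop:energy}, proved in Section~\ref{sec:pf-B+energy} via the Crossing Energy Theorem and the Anosov Closing Lemma; given that tool, the remaining work is light bookkeeping separating finite bars from infinite bars.
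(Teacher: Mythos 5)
Your proposal is correct and follows essentially the same route the paper sketches in the paragraph preceding the proposition: combine the inequality $\beta_{\max}(\varphi^k)\leq\gamma(\varphi^k)\leq\|\varphi^k\|_{\hn}$ from \cite{KS} with the observation that the $\eps$-isolated (via Proposition~\ref{prop:energy}) fixed points of a nearby non-degenerate $\psi$ force a \emph{finite} bar of length $>\eps$, whence $\beta_{\max}(\varphi^k)>\eps$ uniformly in $k$. In case (i) you correctly revisit the proof of Proposition~\ref{prop:isolated} rather than just its statement: since $\dim W_0+\dim W_1=p(k)>\dim\HF(\varphi^k)$, either $W_1\neq 0$ or $W_{00}\neq 0$, and both consist of exact $\eps$-robust vectors, so at least one finite bar of length $>\eps$ is produced. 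In case (ii) you carry out the $\Z_2$-graded refinement the paper only gestures at: an $\eps$-isolated odd-degree generator is necessarily exact (if closed) because $\HF_{\mathit{odd}}=0$, and otherwise the replacement $w_1=x-\zeta$ from the proof of Proposition~\ref{prop:isolated} is a closed odd-degree vector and hence exact; in both subcases Observation~(ii) of that proof, applied to the single vector with $\eps$-isolated leading term, gives $\eps$-robustness and thus a finite bar. This matches what the paper has in mind, with the bookkeeping made explicit.
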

This result is new, although not entirely unexpected to the
authors. Of course, the assertion of Theorem \ref{thm:B} is much
stronger than this proposition; for it guarantees exponential growth
of $b_\eps\big(\varphi^k\big)$ when $\htop(K)>0$. However, the
proposition is more general. We also note that
$\gamma\big(\varphi^k\big)$ is not bounded away from zero
unconditionally when $\varphi^k\neq \id$ for all $k\in\N$. For
instance, $\gamma\big(\varphi^k\big)$ can be arbitrarily small for
pseudo-rotations of $\CP^n$; see \cite{GG:PR}. We revisit connections
between the $\gamma$-norm and dynamics and refine Proposition
\ref{prop:gamma} in \cite{CGG:growth,CGG:generic}.  Overall, little
seems to be known about the behavior of the sequence
$\gamma\big(\varphi^k\big)$.
 
\subsection{Proof of Theorem \ref{thm:C}}
\label{sec:pf-C}
It suffices to show that
\begin{equation}
  \label{eq:hbar-htop}
  \hbr(\varphi)\geq \htop(\varphi)
\end{equation}
whenever $\dim M=2$; for $\hbar(\varphi)\leq \htop(\varphi)$ by
Theorem \ref{thm:A}.

When $M$ is a closed surface,
\begin{equation}
  \label{eq:sup}
  \htop(\varphi)=\sup\big\{\htop(\varphi|_K)\mid
  K \textrm{ is hyperbolic}\big\},
\end{equation}
as a consequence of the results in \cite{Ka80}, and
\eqref{eq:hbar-htop} follows from Theorem \ref{thm:B}. \qed

\begin{Remark}
  Note that in \eqref{eq:sup} it is enough to assume that $\varphi$ is
  only $C^{1+\alpha}$-smooth. Furthermore, we can also require $K$ to
  be locally maximal already by the results from \cite{Ka80}. To see
  this, first recall from \cite[Sect.\ 15.4]{BP} that in dimension two
\begin{equation}
\label{eq:sup2}
\htop(\varphi)=\sup\big\{\htop(\varphi|_K)\mid K \textrm{ is a hyperbolic
  horseshoe}\big\}.
\end{equation}
(This is a consequence of \cite[Thm.\ S.5.9]{KH}, based on
\cite{Ka80}, and two standard results: the Variational Principle,
\cite[Thm.\ 4.5.3]{KH}, and the Ergodic Decomposition Theorem,
\cite[Thm.\ 4.1.12]{KH}, which allows one to restrict the supremum in
the Variational Principle to ergodic measures only.)

Thus, to obtain \eqref{eq:sup} with $K$ locally maximal from
\eqref{eq:sup2}, we just need to make sure that in this context
hyperbolic horseshoes are locally maximal. The definition used in
these results is that a horseshoe is a closed invariant set $K$ such
that $\varphi|_K$ is conjugate to a subshift of finite type (aka, a
topological Markov chain). To be more precise, there is a
decomposition $K=K_0\cup\ldots \cup K_{r-1}$ such that
$\varphi(K_i)=K_{i+1}$, $i\in\Z_r$, and $\varphi^r|_{K_i}$ is
conjugate to a full shift in $s$-symbols; see \cite[Sect.\
S.5.d]{KH}. (In addition, here $K$ is hyperbolic.) Then $K$ has a
local product structure: for any two nearby points the intersection of
the local stable manifold through one of them with the local unstable
manifold through the other is transverse and comprises exactly one
point. This is equivalent to local maximality, see, e.g., \cite[Sect.\
5]{AY}.
\end{Remark}

\end{document}